\documentclass[preprint,12pt]{elsarticle}

\usepackage{amscd}
\usepackage{amsmath}
\usepackage{amsthm}
\usepackage{amsfonts}
\usepackage{graphicx}
\usepackage{amssymb}
\usepackage{color}
\usepackage{amsbsy}
\usepackage{graphicx}
\usepackage{amssymb,color,amsbsy}%, amsthm}
\usepackage{mathtools}
\usepackage{leftindex}

\usepackage{float}
\usepackage{comment}

\usepackage{stmaryrd}

\usepackage[ruled]{algorithm2e}

\usepackage[matrix,arrow]{xy}
\DeclareMathAlphabet{\mathpzc}{OT1}{pzc}{m}{it}

\usepackage{pgfpages}
\usepackage{tikz}
\usetikzlibrary{shapes.geometric}
\usetikzlibrary{decorations.pathmorphing}
\usetikzlibrary{arrows}
\usetikzlibrary{backgrounds}
\usetikzlibrary{positioning}
\usetikzlibrary{fit}
\usepackage{caption}
\usepackage{amsthm}

\usepackage{amsmath}

\usepackage[pagebackref=true, bookmarksopen=true,colorlinks=true, linkcolor=red,citecolor=blue]{hyperref}

\usepackage[capitalise]{cleveref}  %References will output ‘Theorem 3.21’, without you having to enter the “Theorem.”

\global\long\def\ii{\cap}%

\global\long\def\u{\cup}%

\global\long\def\I{\bigcap}%

\global\long\def\U{\bigcup}%

\global\long\def\s{\subset}%

\global\long\def\iso{\approx}%

\global\long\def\ud{\mathbin{\dot{\u}}}%

\global\long\def\p{\prime}%

\global\long\def\P{\prime}%

\global\long\def\ñ{\sim}%

\global\long\def\core#1{\textnormal{core}(#1)}%

\global\long\def\corona#1{\textnormal{corona}(#1)}%

\global\long\def\a#1{\left|#1\right|}%

\usepackage{tikz-cd}

\usepackage{tkz-graph}
\usepackage{multicol}

\usepackage{subcaption}

\newtheorem{theorem}{Theorem}[section]

\newtheorem{claim}[theorem]{Claim}

\newtheorem{conjecture}[theorem]{Conjecture}
\newtheorem{corollary}[theorem]{Corollary}

\newtheorem{lemma}[theorem]{Lemma}

\newtheorem{problem}[theorem]{Problem}
\newtheorem{proposition}[theorem]{Proposition}

\newtheorem{observation}[theorem]{Observation}
\usepackage{comment}
\begin{document}

\begin{abstract}
Let $\core G$ and $\corona G$ denote the intersection and the union,
respectively, of all maximum independent sets of a graph $G$.
A graph is called \emph{$2$-bicritical} if $\a{N(S)}>\a S$ for every nonempty
independent set $S$. Pulleyblank 1979 showed that almost all graphs are
$2$-bicritical.

In this paper, we study the structure of maximum independent sets in
$2$-bicritical graphs with at most two odd cycles.
Using ear--pendant decompositions, we obtain a complete structural
classification of these graphs into four families: one-odd cycle,
fused-odd, even-linked, and odd-linked graphs.
For each family, we compute explicitly $\alpha(G)$, $\core G$, and
$\corona G$, and describe the corresponding matching structure.

We prove that $\a{\core G}+\a{\corona G}$ equals either $2\alpha(G),2\alpha(G)+1$ or
$2\alpha(G)+2$, and we give a complete, purely structural characterization
of the graphs in each case in terms of the relative position of their odd
cycles.

	These results extend a theory originally developed for
	K\"onig--Egerv\'ary graphs and later for almost bipartite graphs
	to a broader non-K\"onig--Egerv\'ary setting.
\end{abstract}

\begin{keyword} 	Maximum Independent Set, Kőnig-Egerváry
	graph, Corona, Core, 2-bicritical, Characterization 	\MSC 05C70, 05C75 \end{keyword}
\begin{frontmatter} 
	\title{Core and Corona in 2-Bicritical Odd-Bicyclic Graphs}

	%Core and Corona in Odd-Bicyclic Graphs

	%\author[IMASL,DEPTO]{XXXXXXXXXXXXXXXX} 	\ead{XXXXXXXXXXXX@unsl.edu.ar} 
	
	\author[IMASL,DEPTO]{Kevin Pereyra} 	\ead{kdpereyra@unsl.edu.ar}

	%\author[]{XXX} 	%\ead{xxx@XXX} 
	%\author[]{XXX} 	%\ead{xxx@XXX}

	\address[IMASL]{Instituto de Matem\'atica Aplicada San Luis, Universidad Nacional de San Luis and CONICET, San Luis, Argentina.}
	\address[DEPTO]{Departamento de Matem\'atica, Universidad Nacional de San Luis, San Luis, Argentina.} 	
	
	\date{Received: date / Accepted: date} 
	
\end{frontmatter} %

\section{Introduction}\label{sss0}

Let $\alpha(G)$ denote the cardinality of a maximum independent set,
and let $\mu(G)$ be the size of a maximum matching in $G=(V,E)$.
It is known that $\alpha(G)+\mu(G)$ equals the order of $G$,
in which case $G$ is a König--Egerváry graph 
\cite{deming1979independence,gavril1977testing,sterboul1979characterization}.
K\H{o}nig-Egerv\'{a}ry graphs have been extensively studied
\cite{bourjolly2009node,jarden2017two,levit2006alpha,levit2012critical,jaume5404413kr}.
It is known that every bipartite graph is a König--Egerváry graph 
\cite{egervary1931combinatorial}. These graphs were independently introduced by Deming \cite{deming1979independence}, Sterboul \cite{sterboul1979characterization}, and \cite{gavril1977testing}. A graph $G$ is almost bipartite if it has only one odd cycle.
Let $\Omega^{*}(G)=\left\{ S:S\textnormal{ is an independent set of }G\right\}$,
$\Omega(G)=\{S:S$ is a maximum independent set of $G\}$,
$\textnormal{core}(G)=\I\left\{ S:S\in\Omega(G)\right\}$ 
\cite{levit2003alpha+}, and 
$\textnormal{corona}(G)=\U\left\{ S:S\in\Omega(G)\right\}$ 
\cite{boros2002number}.

\begin{theorem}If $G$ is an König--Egerváry graph, then 
		\begin{itemize}
		\item \textnormal{\cite{levit2011set}} $\left|\textnormal{corona}(G)\right|+\left|\textnormal{core}(G)\right|=2\alpha(G),$
		\item \textnormal{\cite{levit2003alpha+}} $\textnormal{corona}(G)\cup N(\textnormal{core}(G))=V(G).$
	\end{itemize}
\end{theorem}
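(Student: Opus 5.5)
I will prove both items through the standard matching structure of König--Egerváry graphs. Let $G$ be König--Egerváry, so $\alpha(G)+\mu(G)=|V(G)|$. Fix a maximum matching $M$ and any $S\in\Omega(G)$.

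\textbf{The basic decomposition.} Every edge of $M$ meets $S$ in at most one vertex. Hence $|V(G)\setminus S|\ge |M|=\mu(G)$, since the edges of $M$ cover distinct vertices outside $S$. Equality $|V\setminus S|=\mu(G)$ forces two facts:
\begin{enumerate}
\item $M$ matches $V\setminus S$ into $S$;
\item every edge of $M$ has exactly one endpoint in $S$.
\end{enumerate}
Write $U=V\setminus S$. Then $M$ matches $U$ into $S$, and the unmatched vertices $S\setminus V(M)$ number $\alpha-\mu$. Consequently every maximum independent set consists of all $M$-unexposed vertices together with exactly one endpoint of each edge of $M$. This gives $\corona G\cap\core G$-type bookkeeping: each $M$-edge $e$ contributes $2$ to $|\corona G|+|\core G|$. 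Indeed, if both endpoints of $e$ lie in $\corona G$, then neither lies in $\core G$. If only one endpoint $v$ of $e$ is ever chosen, then $v\in\core G$ and the other endpoint lies in neither set. Each exposed vertex lies in every maximum independent set, so it contributes $1+1=2$.

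\textbf{The first identity.} Summing these contributions,
\[
|\corona G|+|\core G|=2\mu(G)+2(|V|-2\mu(G))=2(|V|-\mu(G))=2\alpha(G).
\]
The one point needing care is showing that, for each $M$-edge, both endpoints lie in $\corona G$ exactly when neither lies in $\core G$. This follows because every $S\in\Omega(G)$ contains exactly one endpoint of each $M$-edge, so the two endpoints' memberships are complementary in every $S$.

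\textbf{The second identity.} Take $v\notin\corona G$. Then $v$ is $M$-matched, say to $w$, and since every $S\in\Omega(G)$ contains exactly one of $v,w$ and never $v$, we get $w\in\core G$. Hence $v\in N(\core G)$, which proves $\corona G\cup N(\core G)=V(G)$.

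\textbf{Main obstacle.} The main obstacle is the initial counting step, which establishes that every maximum independent set meets every edge of a fixed maximum matching in exactly one endpoint and contains all exposed vertices. Once this is in hand, both identities are local, edge-by-edge consequences.
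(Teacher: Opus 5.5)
Your proof is correct. The paper does not prove this theorem; it quotes both items as background from \cite{levit2011set} and \cite{levit2003alpha+}, so there is no in-paper argument to match. What you give is a self-contained version of the standard matching argument behind those results.

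Your counting step is sound. The $\mu(G)$ edges of $M$ have pairwise distinct endpoints outside $S$, and $|V(G)\setminus S|=\mu(G)$. Together these force each $M$-edge to have exactly one endpoint in $S$, and every $M$-exposed vertex to lie in $S$. After that, both items are edge-by-edge checks. Your two cases for an $M$-edge are exhaustive because $\Omega(G)\neq\emptyset$ rules out an edge neither of whose endpoints is ever used. In the first case both endpoints are in $\corona G$ and neither is in $\core G$. In the second, one endpoint is in both sets and the other in neither.

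Your route also gives more than is stated, at no extra cost:
\begin{itemize}
\item The same count yields $|\bigcup\Gamma|+|\bigcap\Gamma|=2\alpha(G)$ for every nonempty $\Gamma\subseteq\Omega(G)$.
\item Your proof of the second item shows that $M$ maps $V(G)\setminus\corona G$ injectively into $\core G$, missing exactly the $\alpha(G)-\mu(G)$ exposed vertices. This is an equivalent reformulation of the first identity.
\item No neighbour of a core vertex can belong to a maximum independent set, so the union $\corona G\cup N(\core G)$ is actually disjoint. This is the ``partition'' form the paper uses later for linked graphs.
\end{itemize}

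One small slip: where you write ``all $M$-unexposed vertices'' you mean the $M$-exposed (unmatched) ones, as your later sentences make clear.
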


\begin{theorem}
	[\label{levit}\cite{levit2025almost}] If $G$ is an almost bipartite
	non-König--Egerváry graph, then 
	\begin{itemize}
		\item $\left|\textnormal{corona}(G)\right|+\left|\textnormal{core}(G)\right|=2\alpha(G)+1,$
		\item $\textnormal{corona}(G)\cup N(\textnormal{core}(G))=V(G).$
	\end{itemize}
\end{theorem}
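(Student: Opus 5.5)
We argue directly from the structure of an almost bipartite graph $G$ with unique odd cycle $C$ that is not König--Egerváry. The key tool is the König--Egerváry theorem (the first theorem above) applied to the bipartite graphs $G-e$ for edges $e$ of $C$. For every $e=xy\in E(C)$ the graph $G-e$ is bipartite, hence König--Egerváry, and $\alpha(G-e)\in\{\alpha(G),\alpha(G)+1\}$.

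\textbf{Step 1: every edge of $C$ is $\alpha$-critical.} We first show that $\alpha(G-e)=\alpha(G)+1$ for every $e\in E(C)$. Suppose instead that $\alpha(G-e)=\alpha(G)$ for some $e$. A maximum matching of $G-e$ is a matching of $G$, so $\mu(G-e)\le\mu(G)$. Since $G-e$ is König--Egerváry,
\[
|V|=\alpha(G-e)+\mu(G-e)\le\alpha(G)+\mu(G)\le |V|,
\]
which makes $G$ König--Egerváry, a contradiction. The same computation also gives $\mu(G-e)=\mu(G)-1$. We then fix one edge $e=xy$ of $C$. Every $S\in\Omega(G-e)$ contains both $x$ and $y$, because otherwise $S$ would be independent in $G$ of size $\alpha(G)+1$. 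In particular $x,y\in\core{G-e}$.

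\textbf{Step 2: express $\Omega(G)$ through $\Omega(G-e)$.} The maximum independent sets of $G$ are exactly the maximum independent sets of $G-e$ that miss $x$ or miss $y$. Their common size $\alpha(G)=\alpha(G-e)-1$ equals $\alpha(G-e-x)=\alpha(G-e-y)$. Hence
\[
\Omega(G)=\Omega(G-e-x)\cup\Omega(G-e-y),
\]
where $G-e-x=G-x$ and $G-e-y=G-y$ are bipartite graphs, hence König--Egerváry. From this, $\corona G=\corona{G-x}\cup\corona{G-y}$ and $\core G=\core{G-x}\cap\core{G-y}$. It remains to count these sets.

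\textbf{Step 3: the counting.} We apply the identity $|\core H|+|\corona H|=2\alpha(H)$ to $H=G-x$ and $H=G-y$, both of independence number $\alpha(G)$, and then use inclusion--exclusion:
\[
|\corona G|+|\core G|=|\corona{G-x}|+|\corona{G-y}|-|\corona{G-x}\cap\corona{G-y}|+|\core{G-x}\cap\core{G-y}|.
\]
The target $2\alpha(G)+1$ is therefore equivalent to
\[
|\corona{G-x}\cap\corona{G-y}|-|\core{G-x}\cap\core{G-y}|=2\alpha(G)-1.
\]
We would prove this by relating both $G-x$ and $G-y$ to $\Omega(G-e)$. Every $T\in\Omega(G-x)$ extends to $T+x\in\Omega(G-e)$; conversely, every $S\in\Omega(G-e)$ gives $S-x\in\Omega(G-x)$. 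Hence $\core{G-x}=\core{G-e}-x$ and $\corona{G-x}=\corona{G-e}-x$, and symmetrically for $y$. Substituting, with $x,y\in\core{G-e}$ from Step~1,
\[
\corona{G-x}\cap\corona{G-y}=\corona{G-e}-\{x,y\},\qquad
\core{G-x}\cap\core{G-y}=\core{G-e}-\{x,y\}.
\]
The difference of their sizes is $|\corona{G-e}|-|\core{G-e}|$. This reduces the whole claim to the identity $|\corona{G-e}|+|\core{G-e}|=2(\alpha(G)+1)$, which holds because $G-e$ is König--Egerváry with $\alpha(G-e)=\alpha(G)+1$. I expect this last identification to be the main obstacle. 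The delicate points are verifying precisely the correspondence between $\Omega(G-x)$ and $\Omega(G-e)$, and tracking where $x$ and $y$ are removed so that the count comes out to $+1$.

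\textbf{Second bullet.} We use $\corona{G-e}\cup N_{G-e}(\core{G-e})=V$, the second item of the König--Egerváry theorem, together with $\core G\supseteq\core{G-e}-\{x,y\}$ and $\corona G=\corona{G-e}$ (including $x$ and $y$). A vertex $v$ outside $\corona G$ is then adjacent to some $c\in\core{G-e}$. This $c$ cannot be $x$ or $y$, because those lie in $\corona G$ and $v$ could not be covered through them. Hence $c\in\core G$ and $v\in N(\core G)$. The last inclusion requires checking $\core G=\core{G-e}-\{x,y\}$, which follows from Step~3.
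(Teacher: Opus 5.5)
The paper gives no proof of this statement; it quotes it from Levit and Mandrescu. So your argument has to stand on its own, and it breaks in Step~3.

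\textbf{What holds, and where it fails.} Steps~1 and~2 are sound in substance: $\alpha(G-e)=\alpha(G)+1$, $x,y\in\core{G-e}$, and $\Omega(G)=\Omega(G-x)\cup\Omega(G-y)$. Hence $\corona{G}=\corona{G-x}\cup\corona{G-y}$ and $\core{G}=\core{G-x}\cap\core{G-y}$. The failing claim is that every $T\in\Omega(G-x)$ extends to $T\cup\{x\}\in\Omega(G-e)$. In fact $T$ may contain a neighbour of $x$ in $G-e$, for instance the other neighbour of $x$ on $C$. Only the map $S\mapsto S\setminus\{x\}$ from $\Omega(G-e)$ into $\Omega(G-x)$ exists, and it is not onto. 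So you get merely $\corona{G-e}\setminus\{x\}\subseteq\corona{G-x}$ and $\core{G-x}\subseteq\core{G-e}\setminus\{x\}$.

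Take $G=C_5$ with vertices $1,\dots,5$ in cyclic order and $e=12$. Then $\Omega(G-e)=\{\{1,2,4\}\}$ but $\Omega(G-1)=\{\{2,4\},\{2,5\},\{3,5\}\}$. So $\corona{G-1}=\{2,3,4,5\}\neq\{2,4\}$ and $\core{G-1}=\emptyset\neq\{2,4\}$.

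This is not a repairable detail. Combined with Step~2, your identifications would give $\corona{G}=\corona{G-e}$ and $\core{G}=\core{G-e}\setminus\{x,y\}$. Hence $|\corona{G}|+|\core{G}|=2\alpha(G-e)-2=2\alpha(G)$, the K\"onig--Egerv\'ary value, which contradicts the statement you are proving. The extra $1$ comes precisely from the members of $\Omega(G-x)\cup\Omega(G-y)$ that do not arise from $\Omega(G-e)$.

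\textbf{The bookkeeping is also wrong.} Inclusion--exclusion together with $|\core{G-x}|+|\corona{G-x}|=2\alpha(G)=|\core{G-y}|+|\corona{G-y}|$ turns the target into $|\corona{G-x}\cap\corona{G-y}|+|\core{G-x}\cup\core{G-y}|=2\alpha(G)-1$. Equivalently, $|\corona{G-y}\setminus\corona{G-x}|=|\core{G-x}\setminus\core{G-y}|+1$, where $x$ always belongs to the left-hand set. Your version with a minus sign tacitly assumes $|\corona{G-x}|+|\corona{G-y}|=4\alpha(G)$, i.e.\ that both cores are empty. At the very end you then need the \emph{difference} $|\corona{G-e}|-|\core{G-e}|$, which the K\"onig--Egerv\'ary identity for the \emph{sum} does not determine.

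\textbf{The second bullet fails for the same reason.} You need the neighbour $c\in\core{G-e}$ of $v$ to lie in $\core{G}$. But the correct inclusion is $\core{G}\subseteq\core{G-e}\setminus\{x,y\}$, the opposite direction: for $C_5$, $4\in\core{G-e}\setminus\{1,2\}$ while $\core{G}=\emptyset$. Likewise $\corona{G}=\corona{G-e}$ is false, since $\corona{C_5}=V$. Your reason for excluding $c\in\{x,y\}$ is also unsupported.

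\textbf{Minor slips.} No maximum independent set of $G-e$ misses $x$ or $y$; you mean independent sets of size $\alpha(G-e)-1$. Step~1 actually yields $\mu(G-e)=\mu(G)$ and $\alpha(G)+\mu(G)=|V|-1$, not $\mu(G-e)=\mu(G)-1$.

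\textbf{What is missing.} You need a genuine comparison of $\Omega(G-x)$ with $\Omega(G-y)$. That means proving the corrected identity above, and proving that every vertex outside $\corona{G-x}\cup\corona{G-y}$ has a neighbour in $\core{G-x}\cap\core{G-y}$. This requires tracking how maximum independent sets can shift around the odd cycle. One possible route is the Gallai--Edmonds decomposition of $G$: for a non-K\"onig--Egerv\'ary almost bipartite graph, $V(C)$ spans a component of $G[D(G)]$. A reduction to the single bipartite graph $G-e$ will not do it.
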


From a structural point of view, graphs with at most two odd cycles
constitute the first nontrivial extension beyond the almost bipartite case.
While bipartite graphs and graphs with a single odd cycle exhibit a rather
rigid behavior with respect to maximum independent sets, the presence of two
odd cycles allows for qualitatively different configurations, depending on
their relative position inside the graph.

The first item of \cref{levit} motivates \cref{mainproblem}.

\begin{problem}[\cite{levit2025almost}\label{mainproblem}]
	Characterize graphs enjoying $\left|\textnormal{corona}(G)\right|+\left|\textnormal{core}(G)\right|=2\alpha(G)+1,$
\end{problem}

In \cite{kevincoronacore}, a reductive-type characterization was obtained
for graphs satisfying
$\a{\core G}+\a{\corona G}=2\alpha(G)+1$.
In this paper, we compute $\core G$ and $\corona G$ explicitly
for $2$-bicritical graphs with at most two odd cycles.
As a consequence, we show that
$\a{\core G}+\a{\corona G}$ is equal to either $2\alpha(G)$ or
$2\alpha(G)+1$, and we characterize the graphs in each case.

\medskip

The paper is organized as follows.
In \cref{sss0} we present the general context of the problem and introduce
the fundamental concepts.
In \cref{sss1} we fix the notation used throughout the paper.
In \cref{sss2} we classify $2$-bicritical graphs with at most two odd cycles
into four distinct families: one-odd cycle, fused-odd, even-linked, and
odd-linked graphs, according to the behavior of their maximum independent
sets, and we derive structural information for each class.
In \cref{sss3} we study even-linked graphs, in \cref{sss4} we study
odd-linked graphs, and in \cref{sss5} we study fused-odd and one-odd cycle
graphs.
Finally, in \cref{sss6} we obtain general characterizations and propose
some conjectures and open problems.

\section{Preliminaries}\label{sss1}
All graphs considered in this paper are finite, undirected, and simple. 
For any undefined terminology or notation, we refer the reader to 
Lovász and Plummer \cite{LP} or Diestel \cite{Distel}.

Let \( G = (V, E) \) be a simple graph, where \( V = V(G) \) is the finite set of vertices and \( E = E(G) \) is the set of edges, with \( E \subseteq \{\{u, v\} : u, v \in V, u \neq v\} \). We denote the edge \( e=\{u, v\} \) as \( uv \). A subgraph of \( G \) is a graph \( H \) such that \( V(H) \subseteq V(G) \) and \( E(H) \subseteq E(G) \). A subgraph \( H \) of \( G \) is called a \textit{spanning} subgraph if \( V(H) = V(G) \). 

Let \( e \in E(G) \) and \( v \in V(G) \). We define \( G - e := (V, E - \{e\}) \) and \( G - v := (V - \{v\}, \{uw \in E : u,w \neq v\}) \). If \( X \subseteq V(G) \), the \textit{induced} subgraph of \( G \) by \( X \) is the subgraph \( G[X]=(X,F) \), where \( F:=\{uv \!\in\! E(G) : u, v \!\in \! X\} \).

The number of vertices in a graph $G$ is called the \textit{order} of the graph and denoted by $\left|G\right|$ or $n(G)$.
A \textit{cycle} in $G$ is called \textit{odd} (resp. \textit{even}) if it has an odd (resp. even) number of edges.

For a vertex $v\in V(G)$, the \emph{neighborhood} of $v$ is
\[
N_G(v)=\{u\in V(G): uv\in E(G)\}.
\]
When no confusion arises, we write $N(v)$ instead of $N_G(v)$. For a set $S\subseteq V(G)$, the \emph{neighborhood} of $S$ is
\[
N_G(S)=\bigcup_{v\in S} N_G(v).
\]

\begin{comment}
	\noindent The \emph{degree} of a vertex $v\in V(G)$ is
$
\deg_G(v)=|N_G(v)|.
$
\noindent The \emph{minimum degree} of $G$ is
$
\delta(G)=\min\{\deg_G(v): v\in V(G)\}.
$
A graph $G$ is called \emph{$r$-regular} if $\deg_G(v)=r$ for every
$v\in V(G)$. A vertex $v\in V(G)$ is called an \emph{isolated vertex} if
$
\deg_G(v)=0.
$
The number of isolated vertices of a graph $G$ is denoted by $i(G)$.
\end{comment}

A \textit{matching} \(M\) in a graph \(G\) is a set of pairwise non-adjacent edges. The \textit{matching number} of \(G\), denoted by  \(\mu(G)\), is the maximum cardinality of any matching in \(G\). Matchings induce an involution on the vertex set of the graph: \(M:V(G)\rightarrow V(G)\), where \(M(v)=u\) if \(uv \in M\), and \(M(v)=v\) otherwise. If \(S, U \subseteq V(G)\) with \(S \cap U = \emptyset\), we say that \(M\) is a matching from \(S\) to \(U\) if \(M(S) \subseteq U\). A matching $M$ is \emph{perfect} if $M(v)\neq v$ for every vertex
of the graph. A matching is \emph{near-perfect} if \( \left|{v \in V(G) : M(v) = v}\right| = 1 \).  A graph is a factor-critical graph if $G-v$ has perfect matching
for every vertex $v\in V(G)$.

A vertex set \( S \subseteq V \) is \textit{independent} if, for every pair of vertices \( u, v \in S \), we have \( uv \notin E \). 
The number of vertices in a maximum independent set is denoted by \( \alpha(G) \).  A \textit{bipartite} graph is a graph whose vertex set can be partitioned into two disjoint independent sets.

We will use the Gallai–Edmonds structure theorem throughout the paper, together with the associated notation.

\begin{theorem}
	[\label{ge}\cite{edmonds1965paths,gallai1964maximale} Gallai--Edmonds
	structure theorem]
	Let $G$ be a graph, and define
	\begin{align*}
		D(G) & := \{ v : \textnormal{there exists a maximum matching that misses } v \}, \\
		A(G) & := \{ v : v \textnormal{ is adjacent to some } u \in D(G), \textnormal{ but } v \notin D(G) \}, \\
		C(G) & := V(G) - (D(G) \cup A(G)).
	\end{align*}
	
	\noindent If $G_{1}, \dots, G_{k}$ are the connected components of $G[D(G)]$
	and $M$ is a maximum matching of $G$, then:
	\begin{enumerate}
		\item $M$ covers $C(G)$ and matches $A(G)$ into distinct components of $G[D(G)]$.
		\item Each $G_i$ is a factor-critical graph, and the restriction of $M$ to $G_i$ 
		is a near-perfect matching.
	\end{enumerate}
\end{theorem}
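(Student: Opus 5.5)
We would follow the classical route of Lovász and Plummer \cite{LP}. It combines a stability lemma for the set $A(G)$, Gallai's lemma on factor-critical graphs, and a counting argument based on the Tutte--Berge formula $\mu(G)=\min_{X}\frac{1}{2}\bigl(|V(G)|+|X|-o(G-X)\bigr)$, where $o(\cdot)$ denotes the number of odd components.

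\textbf{Step 1 (stability lemma).} For $a\in A(G)$ we will prove three facts:
\begin{itemize}
\item $\mu(G-a)=\mu(G)-1$;
\item $D(G-a)=D(G)$;
\item $A(G-a)=A(G)-a$ and $C(G-a)=C(G)$.
\end{itemize}
The first holds because $a\notin D(G)$, so every maximum matching covers $a$; deleting $a$ therefore loses exactly one edge and cannot be repaired. The inclusion $D(G)\subseteq D(G-a)$ follows by exchange arguments along alternating paths from a vertex $u\in D(G)$ adjacent to $a$. For the converse, a maximum matching of $G-a$ missing $v$ together with the edge $au$, for a suitable $u$, yields a maximum matching of $G$ missing $v$. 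Iterating the lemma over all of $A$ gives $D(G-A)=D(G)$ and $\mu(G-A)=\mu(G)-|A|$. It also shows that in $G-A$ no edge joins $D$ to $C$, by the definition of $A$.

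\textbf{Step 2 (factor-criticality).} Each component $G_i$ of $G[D]$ is a component of $G-A$, and every vertex of $G_i$ is missed by some maximum matching of $G_i$. We will then invoke Gallai's lemma: a connected graph in which every vertex is missed by some maximum matching is factor-critical. To prove Gallai's lemma we would take $u,v$ with $v\in N(u)$, and maximum matchings $M_u$ and $M_v$ missing $u$ and $v$ respectively. We would then analyze the alternating path starting at $u$ in $M_u\triangle M_v$, which shows that some maximum matching misses both $u$ and $v$; this contradicts maximality once $uv$ is added, unless the deficiency is exactly one. This step is the main obstacle, since the alternating-path case analysis must be done carefully. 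A clean alternative is to argue that $G_i$ has no nonempty barrier and deduce the claim from the Tutte--Berge formula.

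\textbf{Step 3 (counting and conclusion).} Each $G_i$ is odd, being factor-critical. Moreover $G[C]$ has a perfect matching, because $C\cap D(G-A)=\emptyset$. Hence
\[
\mu(G-A)=\frac{|C|}{2}+\sum_i\frac{|G_i|-1}{2}.
\]
Combining this with $\mu(G)=\mu(G-A)+|A|$, and comparing with the Tutte--Berge bound for $X=A$, we see that $A$ is a barrier attaining the minimum. Equality in the bound forces every maximum matching $M$ to have the following properties:
\begin{itemize}
\item $M$ matches each vertex of $A$ to a distinct odd component;
\item $M$ covers $C$ perfectly;
\item $M$ restricts to a near-perfect matching inside each $G_i$.
\end{itemize}
These are exactly items 1 and 2 of \cref{ge}.
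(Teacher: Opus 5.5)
The paper gives no proof of this theorem. It is quoted from Gallai and Edmonds and used as a black box, only to know that a nontrivial component of $G[D(G)]$ is factor-critical and hence not bipartite. So there is no in-paper argument to compare against.

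Your outline is the standard Lov\'asz--Plummer route, and its architecture (stability lemma, Gallai's lemma, tight Tutte--Berge count) is sound. Step 3 is complete as written. Since $G[C]$ has a perfect matching, its components are even, so $o(G-A)=k$ and the bound at $X=A$ is attained. For any maximum matching $M$ there are then exactly $n-2|M|=k-|A|$ exposed vertices. At least $k-|A|$ components $G_i$ receive no $M$-edge from $A$, and each of them contains an exposed vertex. Equality forces all three properties.

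Two local steps need repair.

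\textbf{Stability lemma.} You attached the alternating-path argument to the wrong inclusion. The inclusion $D(G)\subseteq D(G-a)$ is immediate: a maximum matching missing $v$ covers $a$, and deleting its edge at $a$ leaves a maximum matching of $G-a$ that still misses $v$. The converse is the nontrivial direction. There, ``a maximum matching $N$ of $G-a$ missing $v$ plus an edge $au$'' is not available in general, because $N$ may cover every neighbour of $a$ other than $v$. For example, join $a$ to one vertex of each of two disjoint triangles. The fix is as follows:
\begin{enumerate}
\item Take $u\in D(G)\cap N(a)$, a maximum matching $M$ of $G$ missing $u$, and the component $Q$ of $M\triangle N$ starting at $v$.
\item Suppose $v\notin D(G)$. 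Then $Q$ cannot have even length, since otherwise $M\triangle Q$ misses $v$.
\item So $Q$ is odd, and it must end at $a$, since otherwise it is $N$-augmenting in $G-a$.
\item Since $u\notin V(Q)$, the set $(M\triangle Q)+au$ is a maximum matching of $G$ missing $v$, a contradiction.
\end{enumerate}

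\textbf{Gallai's lemma.} Your primary sketch does not work. For adjacent $u,v$, the component of $M_u\triangle M_v$ at $u$ is \emph{forced} to be an even alternating path ending at $v$. An odd one would augment $M_u$, and one ending elsewhere would give a maximum matching missing both $u$ and $v$. This configuration is perfectly consistent, so no contradiction appears and the deficiency never enters. You have two working options instead:
\begin{enumerate}
\item The minimal-distance argument: take a maximum matching $M$ with two exposed vertices at minimum distance, an interior vertex $t$ of a shortest path joining them, and a maximum matching missing $t$ that shares as many edges with $M$ as possible.
\item Your barrier alternative, which is correct. A nonempty set attaining equality in Tutte--Berge is covered by every maximum matching (by the same count as in Step 3), contradicting $D(G_i)=V(G_i)$. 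Hence $\emptyset$ is optimal, and the deficiency of the connected graph $G_i$ is $o(G_i)$. This equals $1$ because some vertex of $G_i$ is exposed.
\end{enumerate}
The barrier alternative uses the min--max direction of Tutte--Berge. It is therefore non-circular only if your development does not derive Tutte--Berge from Gallai--Edmonds.
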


\section{Structural classification of odd-bicyclic graphs}\label{sss2}

The aim of this section is to isolate all possible structural configurations
of $2$-bicritical graphs with at most two odd cycles.
This classification will be used systematically in the subsequent sections
to compute $\alpha(G)$, $\core G$, and $\corona G$ explicitly.

\medskip

The notion of 2-bicritical graphs was introduced
in \cite{pulleyblank1979minimum}, and they can be characterized as follows.

\begin{theorem}[\cite{pulleyblank1979minimum}\label{1928u3123}]
	A  graph $G$ is $2$-bicritical if and only if $\left|N(S)\right|>\left|S\right|$
	for every nonempty independent set $S\subseteq V(G)$.
\end{theorem}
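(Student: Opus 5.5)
We take Pulleyblank's original definition: $G$ is $2$-bicritical if $G-v$ has a perfect $2$-matching for every $v\in V(G)$. A perfect $2$-matching is an assignment $x:E\to\{0,1,2\}$ with $\sum_{e\ni u}x_e=2$ at every vertex $u$. The plan is to translate perfect $2$-matchings into perfect matchings of the bipartite double cover and then apply Hall's theorem.

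\textbf{Step 1 (double cover).} Let $B(H)$ be the bipartite graph with sides $\{u'\}$ and $\{u''\}$ for $u\in V(H)$, and edges $u'w''$ whenever $uw\in E(H)$. We show that $H$ has a perfect $2$-matching if and only if $B(H)$ has a perfect matching.
\begin{itemize}
\item Given a perfect matching of $B(H)$, set $x_{uw}$ equal to the number of the two arcs $u'w''$, $w'u''$ that are used. At each vertex $u$ we then have $\sum_{e\ni u} x_e=2$, one unit coming from $u'$ and one from $u''$.
\item Conversely, an edge $uw$ with $x_{uw}=2$ gives the two arcs $u'w''$ and $w'u''$. The edges with weight $1$ form a $2$-regular subgraph, hence a disjoint union of cycles. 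Orienting each cycle cyclically and using the arc $u'w''$ for each oriented edge $u\to w$ yields a perfect matching of $B(H)$.
\end{itemize}
By Hall's theorem, $B(H)$ has a perfect matching if and only if $|N_H(X)|\ge |X|$ for every $X\subseteq V(H)$.

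\textbf{Step 2 (reduction to independent sets).} We show that $|N_H(S)|\ge|S|$ for all independent $S$ already implies $|N_H(X)|\ge|X|$ for all $X$. Given $X$, let $I=X\setminus N_H(X)$; this set is independent. No vertex of $I$ has a neighbour in $X$, so $N_H(I)\cap X=\emptyset$. Moreover $X\cap N_H(X)$ and $N_H(I)$ are disjoint subsets of $N_H(X)$. Hence
$$|N_H(X)|\ge |X\cap N_H(X)|+|N_H(I)|\ge |X\setminus I|+|I|=|X|.$$
Combining with Step 1: $H$ has a perfect $2$-matching if and only if $|N_H(S)|\ge|S|$ for every independent $S\subseteq V(H)$.

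\textbf{Step 3 (the statement).} Assume $|N(S)|>|S|$ for all nonempty independent $S$. Fix $v$ and let $S\subseteq V-v$ be independent and nonempty. Then $|N_{G-v}(S)|\ge |N(S)|-1\ge |S|$, so $G-v$ has a perfect $2$-matching by Step 2.

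Conversely, assume $G$ is $2$-bicritical and let $S$ be nonempty and independent.
\begin{itemize}
\item If $N(S)=\emptyset$, then $S$ contains an isolated vertex $u$. Choosing any $v\neq u$ leaves $u$ isolated in $G-v$, so $G-v$ has no perfect $2$-matching. Here we use the convention that $G$ has at least two vertices.
\item Otherwise pick $v\in N(S)$. Then $S\subseteq V-v$ is independent in $G-v$, and Hall's condition in $B(G-v)$ gives $|S|\le |N_{G-v}(S)|=|N(S)|-1$.
\end{itemize}
In both cases $|N(S)|>|S|$.

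The only delicate point is the converse direction of Step 1, where a perfect $2$-matching must be lifted to a perfect matching of $B(H)$. This requires decomposing the weight-$1$ edges into cycles and orienting each cycle. Everything else is Hall's theorem together with the counting in Step 2.
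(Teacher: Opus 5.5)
Your proof is correct. The paper gives no proof of this statement. It is quoted from Pulleyblank, and the abstract simply takes $|N(S)|>|S|$ for every nonempty independent $S$ as the \emph{definition} of $2$-bicritical. Within the paper the result is therefore used as a black box, and perfect $2$-matchings never appear.

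What you supply is the standard self-contained derivation from Pulleyblank's original definition. The double cover turns perfect $2$-matchings into perfect matchings. Hall's theorem together with your Step 2 gives the classical criterion: $H$ has a perfect $2$-matching if and only if $|N_H(S)|\ge |S|$ for all independent $S$. Deleting one vertex lowers $|N(S)|$ by at most one, and by exactly one when that vertex lies in $N(S)$.

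Two small comments. First, your restriction to $|V(G)|\ge 2$ is genuinely necessary, not just a convenience. The graph $K_1-v$ is empty and so vacuously has a perfect $2$-matching, yet $S=V(K_1)$ violates $|N(S)|>|S|$. The statement as written in the paper tacitly excludes this case.

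Second, the step you single out as delicate, lifting a perfect $2$-matching to a perfect matching of the double cover, is used only in the converse of Step 3, and it can be bypassed. Let $x$ be a perfect $2$-matching of $G-v$ and let $S\subseteq V(G)-v$ be independent. Every edge meeting $S$ has its other end in $N_{G-v}(S)$, so $2|S|=\sum_{u\in S}\sum_{e\ni u}x_e\le\sum_{w\in N_{G-v}(S)}\sum_{e\ni w}x_e=2|N_{G-v}(S)|$. Hence only the easy direction of Step 1 is actually needed.
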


The class of $2$-bicritical graphs can be regarded as the structural counterpart of König--Egerváry graphs \cite{larson2011critical}. In recent works, several new properties of $2$-bicritical graphs have been established; see, for instance, \cite{kevinSDKECHAR, kevinSDKEGE, kevinPOSYFACTOR}. It is important to note that Pulleyblank in 1979 showed that almost all graphs are 2-bicritical \cite{pulleyblank1979minimum}. In \cite{larson2011critical}, Larson showed that every graph can be decomposed into a 2-bicritical graph and a Kőnig–Egerváry graph.

\medskip

We say that $G^{\P}$ is an odd homeomorph of $G$ if $G^{\P}$ is
obtained by replacing each edge of $G$ with a path of odd length,
such that these paths remain internally disjoint. Let $B$ be a subgraph
of $G$. An ear in $G$ with respect to $B$ is an odd-length path
in $G$ whose endpoints lie in $B$, but whose internal vertices
lie outside $B$, and in which all internal vertices are distinct.
A pendant in $G$ with respect to $B$ consists of an odd-length simple cycle
$C$ in $G$, vertex-disjoint from $B$, together with a positive-length
simple path with one end in $C$ and the other in $B$, with all other
vertices lying outside both $B$ and $C$. The vertex in $B$ is called
the end of the pendant.

An \emph{ear-pendant} decomposition of a graph $G$ is a sequence
$G_{0},G_{1},\dots,G_{p}=G$ of graphs where $G_{0}$ is either an
odd cycle or an odd homeomorph of $K_{4}$, and for each $i\in\{1,\dots,p\}$,
$G_{i}$ is obtained from $G_{i-1}$ by adding either an ear or a
pendant. In \cite{bourjolly1989konig} the following characterization of 2-bicritical
graphs in terms of ear-pendant decompositions was given.

\begin{theorem}[\label{puyelarpendietnedesc}\cite{bourjolly1989konig}]
	A connected graph is 2-bicritical if and only if it has an ear-pendant decomposition.
\end{theorem}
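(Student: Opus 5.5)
We will use the neighbourhood characterization of \cref{1928u3123} for both directions: $G$ is $2$-bicritical if and only if $\a{N(S)}>\a S$ for every nonempty independent set $S$.

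\textbf{Sufficiency.} We induct along an ear-pendant decomposition $G_0,\dots,G_p=G$.

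For the base case, first let $G_0$ be an odd cycle $C_{2m+1}$. Orient the cycle and send each vertex of an independent set $S$ to its successor; this map is injective, so $\a{N(S)}\ge\a S$. Equality would force $S$ to be exactly every other vertex all the way around the cycle, which is impossible on an odd cycle. Hence $\a{N(S)}>\a S$.

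For an odd homeomorph of $K_4$, we reduce to the ear lemma below where possible: such a graph is an odd cycle with internally disjoint odd paths attached. Where the resulting pieces are not ears (as in $K_4$ itself, where the added path has even length), we check the finitely many patterns of $S$ on the branch paths directly.

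For the induction step, we show that adding an ear or a pendant preserves the property.

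\emph{Ear.} Let $P=a\,p_1\cdots p_{2t}\,b$ be an ear added to $H$, and write $S=S_0\cup S_1$ with $S_0\subseteq V(H)$ and $S_1\subseteq\{p_1,\dots,p_{2t}\}$. The internal vertices carry the perfect matching $p_1p_2,\,p_3p_4,\dots$. This gives $\a{N(S_1)\setminus V(H)}\ge\a{S_1}$ with these neighbours disjoint from $V(H)$.
\begin{itemize}
\item If $S_0\neq\emptyset$, induction gives $\a{N(S)}\ge\a{N_H(S_0)}+\a{S_1}>\a S$.
\item If $S_0=\emptyset$, view $S_1$ as an independent set inside the path $a\,p_1\cdots b$ avoiding both ends. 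Mapping each vertex to its right neighbour is injective, and the left neighbour of the leftmost vertex of $S_1$ is an extra neighbour. Hence $\a{N(S)}>\a S$.
\end{itemize}

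\emph{Pendant.} A pendant is an odd cycle $C$ joined to $H$ by a path $Q$. We split $S$ into its parts in $V(H)$, in the interior of $Q$, and in $C$. The odd-cycle count supplies a strict surplus on $C$, the path $Q$ contributes injective right-neighbours along $Q$ towards $C$, and the $H$-part is handled by induction. If $S$ meets $C$, the strict surplus comes from $C$; otherwise it comes from $H$ or from the path argument above.

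\textbf{Necessity.} Let $G$ be connected and $2$-bicritical.

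First, $G$ is not bipartite. If $X\cup Y$ were a bipartition with $\a X\le\a Y$, then $S=Y$ would satisfy $N(Y)\subseteq X$ and so $\a{N(Y)}\le\a Y$, a contradiction. Hence $G$ contains an odd cycle.

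Next, choose a subgraph $H\subseteq G$ that has an ear-pendant decomposition and has the maximum number of edges and vertices. We claim $H=G$. Suppose not.
\begin{itemize}
\item If some edge of $G$ joins two vertices of $H$ but is missing from $H$, it is an ear of length one unless its endpoints have the wrong parity structure. That case must be absorbed by allowing the $K_4$-homeomorph base or by re-choosing the decomposition.
\item Otherwise, look at each component $K$ of $G-V(H)$ together with its attachment vertices in $H$. If $K\cup N_H(K)$ contains an odd path between two attachment vertices, that path is an ear. If it contains an odd cycle reachable from $H$, we obtain a pendant. Either way $H$ is not maximal.
\item Hence every such piece is bipartite, and all its paths between attachment vertices have even length. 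Then attachment vertices and the vertices of $K$ at even distance from them are consistently $2$-coloured. Taking $S$ to be the colour class of $K$ farthest from $H$, all its neighbours lie in the other class inside $K$. A counting argument, choosing whichever class is larger, should then give a nonempty independent $S$ with $\a{N(S)}\le\a S$, contradicting \cref{1928u3123}.
\end{itemize}

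\textbf{Main obstacle.} The hardest step will be this last extraction of a violating independent set. The difficulty is to control how $N(S)$ meets $H$ when $K$ has several attachment vertices. It is also to explain why the $K_4$-homeomorph base is needed, namely when an even path between two vertices of an odd cycle cannot be realized as ears. I would first try to pass to a minimal counterexample, or to use the Gallai--Edmonds decomposition (\cref{ge}) of an auxiliary doubled graph, to organize the count.
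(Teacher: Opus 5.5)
The paper does not prove this statement; it quotes it from Bourjolly and Pulleyblank, so your proposal has to stand on its own.

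\textbf{Sufficiency.} This half is essentially sound, but it needs three repairs.
\begin{itemize}
\item Closed ears. These are needed under the paper's definitions, e.g.\ for two triangles sharing a vertex. For an ear with $a=b$, $S_0=\emptyset$ and $p_1,p_{2t}\in S_1$, the ``extra'' left neighbour of $p_1$ is $a$, which is also the image of $p_{2t}$. In that case you must instead apply the odd-cycle count to $a\,p_1\cdots p_{2t}\,a$.
\item Pendants. The junction vertex $c\in V(C)$ can be counted both as the image of the last path vertex and in $N_C(S_C)$. The surpluses absorb this collision, but the bookkeeping has to say so.
\item The odd homeomorph $F$ of $K_4$. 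This is not a finite check, since the branch paths have unbounded length. Here is one way to do it. Pick $v\in N(S)$. Choosing according to the parity position of $v$ on its branch path, some odd cycle $Z$ of $F$ through three branch vertices avoids $v$, and $F-v-V(Z)$ has a perfect matching. The successor map on $Z$ together with the partner map then injects $S$ into $N(S)\setminus\{v\}$.
\end{itemize}

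\textbf{Necessity: the genuine gap.} Your first bullet misplaces the difficulty. An edge of $G$ between two vertices of $H$ is always an ear of length one, since ears carry no parity condition on their ends, so chords never obstruct.

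The obstruction is your third bullet, and the argument sketched there cannot work as stated. It uses only local facts: nothing can be attached to $H$, $K$ is bipartite, and all attachment paths through $K$ are even. This configuration occurs in $2$-bicritical graphs:
\begin{itemize}
\item $G=K_4$, with $H$ a triangle $abc$ and $K=\{d\}$. Every path $x\,d\,y$ has length two and nothing attaches to $H$, yet $|N(S)|>|S|$ for all nonempty independent $S$.
\item $C_5$ plus a vertex adjacent to three consecutive cycle vertices.
\item Two triangles $abc,\,a'b'c'$ joined by the edge $cc'$, plus $d$ adjacent to $a$ and $a'$.
\end{itemize}
Moreover, when a violating set does exist it need not live in $K$. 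In the diamond (triangle $abc$ plus $d$ adjacent to $a,b$) the only witness is $\{c,d\}$. It uses a vertex of $H$ whose $H$-neighbourhood is tight and covers the attachments, so no count confined to the colour classes of $K$ can find it.

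What is missing is a genuinely global step that actually uses the maximality of $H$. You need to show that, when no violating set exists, $G$ contains a decomposable subgraph with more edges than $H$. This requires reorganizing the whole decomposition, not just extending it. Two examples of what that reorganization looks like:
\begin{itemize}
\item Use the even path $a\,d\,a'$ as the stem of a pendant and $cc'$ as a later ear (the two-triangle example).
\item Re-root at an odd $K_4$-homeomorph formed by part of $K$ and an odd cycle of $H$, then re-add the rest of $H$ by ears and pendants (the $K_4$ and $C_5$ examples).
\end{itemize}
This reorganization is the real content of the theorem, and it is why the $K_4$ base is there at all. Phrases like ``a counting argument should then give'' and ``use Gallai--Edmonds on a doubled graph'' do not supply it, so the necessity direction remains unproved.
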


\begin{comment}
	
Un grafo es un \emph{odd-bicyclic} graph si tiene exactamente dos
ciclos impares. En esta sección presentamos una clasificación estructural
de los grafos odd-bicyclic en cinco tipos distintos. 

Para desarrollar esta clasificación, comenzamos introduciendo algunas
definiciones y notación que serán utilizadas a lo largo de todo el
trabajo.

\end{comment}

Let $G$ be a connected $2$-bicritical graph with at most two odd cycles, and note that
$G$ contains at least one odd cycle.
Then $G$ can be classified into one of the following four classes.
This classification reflects qualitatively different behaviors of the
parameters $\core G,\corona G,\alpha(G)$.

\begin{itemize}
	\item If $G$ has a unique odd cycle, we say that $G$ is a
	\emph{one-odd cycle} graph.
	In this case, by the ear-pendant decomposition, it is easy to see that
	$G$ necessarily coincides with this odd cycle (see \cref{asidj123llllk}).
\end{itemize}

\noindent Suppose now that $G$ contains exactly two odd cycles and consider
an ear-pendant decomposition of $G$:
\[
G_{0},\dots,G_{p}=G.
\]

\noindent If $G_{0}$ is an odd homeomorph of $K_{4}$, then $G$ has more than
two odd cycles, a contradiction.
Therefore, $G_{0}$ is an odd cycle.
\begin{itemize}
	\item If $G_{1}$ is obtained from $G_{0}$ by adding an odd ear, then
	$p=1$.
	Indeed, in this case, adding either an odd ear or a pendant to $G_{1}$
	generates at least one new odd cycle (see \cref{asidj123llllk}).
	In this case, we say that $G$ is a \emph{fused-odd} graph.
\end{itemize}

\begin{figure}[H]
	
	\begin{center}

		\tikzset{every picture/.style={line width=0.75pt}} %set default line width to 0.75pt        
		
		\begin{tikzpicture}[x=0.75pt,y=0.75pt,yscale=-1,xscale=1]
			%uncomment if require: \path (0,300); %set diagram left start at 0, and has height of 300
			
			%Straight Lines [id:da5684062955632688] 
			\draw    (268.11,144) -- (211.76,144) ;
			\draw [shift={(211.76,144)}, rotate = 180] [color={rgb, 255:red, 0; green, 0; blue, 0 }  ][fill={rgb, 255:red, 0; green, 0; blue, 0 }  ][line width=0.75]      (0, 0) circle [x radius= 3.35, y radius= 3.35]   ;
			\draw [shift={(268.11,144)}, rotate = 180] [color={rgb, 255:red, 0; green, 0; blue, 0 }  ][fill={rgb, 255:red, 0; green, 0; blue, 0 }  ][line width=0.75]      (0, 0) circle [x radius= 3.35, y radius= 3.35]   ;
			%Straight Lines [id:da842150677856865] 
			\draw    (268.11,144) -- (285.52,90.41) ;
			\draw [shift={(285.52,90.41)}, rotate = 288] [color={rgb, 255:red, 0; green, 0; blue, 0 }  ][fill={rgb, 255:red, 0; green, 0; blue, 0 }  ][line width=0.75]      (0, 0) circle [x radius= 3.35, y radius= 3.35]   ;
			\draw [shift={(268.11,144)}, rotate = 288] [color={rgb, 255:red, 0; green, 0; blue, 0 }  ][fill={rgb, 255:red, 0; green, 0; blue, 0 }  ][line width=0.75]      (0, 0) circle [x radius= 3.35, y radius= 3.35]   ;
			%Straight Lines [id:da28386711909902496] 
			\draw    (239.93,57.29) -- (285.52,90.41) ;
			\draw [shift={(285.52,90.41)}, rotate = 36] [color={rgb, 255:red, 0; green, 0; blue, 0 }  ][fill={rgb, 255:red, 0; green, 0; blue, 0 }  ][line width=0.75]      (0, 0) circle [x radius= 3.35, y radius= 3.35]   ;
			\draw [shift={(239.93,57.29)}, rotate = 36] [color={rgb, 255:red, 0; green, 0; blue, 0 }  ][fill={rgb, 255:red, 0; green, 0; blue, 0 }  ][line width=0.75]      (0, 0) circle [x radius= 3.35, y radius= 3.35]   ;
			%Straight Lines [id:da47632787023862067] 
			\draw    (239.93,57.29) -- (194.35,90.41) ;
			\draw [shift={(194.35,90.41)}, rotate = 144] [color={rgb, 255:red, 0; green, 0; blue, 0 }  ][fill={rgb, 255:red, 0; green, 0; blue, 0 }  ][line width=0.75]      (0, 0) circle [x radius= 3.35, y radius= 3.35]   ;
			\draw [shift={(239.93,57.29)}, rotate = 144] [color={rgb, 255:red, 0; green, 0; blue, 0 }  ][fill={rgb, 255:red, 0; green, 0; blue, 0 }  ][line width=0.75]      (0, 0) circle [x radius= 3.35, y radius= 3.35]   ;
			%Straight Lines [id:da03497756630368143] 
			\draw    (211.76,144) -- (194.35,90.41) ;
			\draw [shift={(194.35,90.41)}, rotate = 252] [color={rgb, 255:red, 0; green, 0; blue, 0 }  ][fill={rgb, 255:red, 0; green, 0; blue, 0 }  ][line width=0.75]      (0, 0) circle [x radius= 3.35, y radius= 3.35]   ;
			\draw [shift={(211.76,144)}, rotate = 252] [color={rgb, 255:red, 0; green, 0; blue, 0 }  ][fill={rgb, 255:red, 0; green, 0; blue, 0 }  ][line width=0.75]      (0, 0) circle [x radius= 3.35, y radius= 3.35]   ;
			%Straight Lines [id:da5954720999171962] 
			\draw    (268.11,144) ;
			\draw [shift={(268.11,144)}, rotate = 0] [color={rgb, 255:red, 0; green, 0; blue, 0 }  ][fill={rgb, 255:red, 0; green, 0; blue, 0 }  ][line width=0.75]      (0, 0) circle [x radius= 3.35, y radius= 3.35]   ;
			\draw [shift={(268.11,144)}, rotate = 0] [color={rgb, 255:red, 0; green, 0; blue, 0 }  ][fill={rgb, 255:red, 0; green, 0; blue, 0 }  ][line width=0.75]      (0, 0) circle [x radius= 3.35, y radius= 3.35]   ;
			%Straight Lines [id:da37200074677700923] 
			\draw    (211.76,144) ;
			\draw [shift={(211.76,144)}, rotate = 0] [color={rgb, 255:red, 0; green, 0; blue, 0 }  ][fill={rgb, 255:red, 0; green, 0; blue, 0 }  ][line width=0.75]      (0, 0) circle [x radius= 3.35, y radius= 3.35]   ;
			\draw [shift={(211.76,144)}, rotate = 0] [color={rgb, 255:red, 0; green, 0; blue, 0 }  ][fill={rgb, 255:red, 0; green, 0; blue, 0 }  ][line width=0.75]      (0, 0) circle [x radius= 3.35, y radius= 3.35]   ;
			%Straight Lines [id:da44961009369815985] 
			\draw    (194.35,90.41) ;
			\draw [shift={(194.35,90.41)}, rotate = 0] [color={rgb, 255:red, 0; green, 0; blue, 0 }  ][fill={rgb, 255:red, 0; green, 0; blue, 0 }  ][line width=0.75]      (0, 0) circle [x radius= 3.35, y radius= 3.35]   ;
			\draw [shift={(194.35,90.41)}, rotate = 0] [color={rgb, 255:red, 0; green, 0; blue, 0 }  ][fill={rgb, 255:red, 0; green, 0; blue, 0 }  ][line width=0.75]      (0, 0) circle [x radius= 3.35, y radius= 3.35]   ;
			%Straight Lines [id:da13165331673725245] 
			\draw    (285.52,90.41) ;
			\draw [shift={(285.52,90.41)}, rotate = 0] [color={rgb, 255:red, 0; green, 0; blue, 0 }  ][fill={rgb, 255:red, 0; green, 0; blue, 0 }  ][line width=0.75]      (0, 0) circle [x radius= 3.35, y radius= 3.35]   ;
			\draw [shift={(285.52,90.41)}, rotate = 0] [color={rgb, 255:red, 0; green, 0; blue, 0 }  ][fill={rgb, 255:red, 0; green, 0; blue, 0 }  ][line width=0.75]      (0, 0) circle [x radius= 3.35, y radius= 3.35]   ;
			%Straight Lines [id:da5240130386890922] 
			\draw    (239.93,57.29) ;
			\draw [shift={(239.93,57.29)}, rotate = 0] [color={rgb, 255:red, 0; green, 0; blue, 0 }  ][fill={rgb, 255:red, 0; green, 0; blue, 0 }  ][line width=0.75]      (0, 0) circle [x radius= 3.35, y radius= 3.35]   ;
			\draw [shift={(239.93,57.29)}, rotate = 0] [color={rgb, 255:red, 0; green, 0; blue, 0 }  ][fill={rgb, 255:red, 0; green, 0; blue, 0 }  ][line width=0.75]      (0, 0) circle [x radius= 3.35, y radius= 3.35]   ;
			%Straight Lines [id:da4299024282363989] 
			\draw    (355.55,144.95) -- (337.95,91.42) ;
			\draw [shift={(337.95,91.42)}, rotate = 251.8] [color={rgb, 255:red, 0; green, 0; blue, 0 }  ][fill={rgb, 255:red, 0; green, 0; blue, 0 }  ][line width=0.75]      (0, 0) circle [x radius= 3.35, y radius= 3.35]   ;
			\draw [shift={(355.55,144.95)}, rotate = 251.8] [color={rgb, 255:red, 0; green, 0; blue, 0 }  ][fill={rgb, 255:red, 0; green, 0; blue, 0 }  ][line width=0.75]      (0, 0) circle [x radius= 3.35, y radius= 3.35]   ;
			%Straight Lines [id:da38917464240578403] 
			\draw    (355.55,144.95) -- (411.9,144.75) ;
			\draw [shift={(411.9,144.75)}, rotate = 359.8] [color={rgb, 255:red, 0; green, 0; blue, 0 }  ][fill={rgb, 255:red, 0; green, 0; blue, 0 }  ][line width=0.75]      (0, 0) circle [x radius= 3.35, y radius= 3.35]   ;
			\draw [shift={(355.55,144.95)}, rotate = 359.8] [color={rgb, 255:red, 0; green, 0; blue, 0 }  ][fill={rgb, 255:red, 0; green, 0; blue, 0 }  ][line width=0.75]      (0, 0) circle [x radius= 3.35, y radius= 3.35]   ;
			%Straight Lines [id:da22826561558253877] 
			\draw    (429.12,91.1) -- (421.11,116.05) -- (411.9,144.75) ;
			\draw [shift={(411.9,144.75)}, rotate = 107.8] [color={rgb, 255:red, 0; green, 0; blue, 0 }  ][fill={rgb, 255:red, 0; green, 0; blue, 0 }  ][line width=0.75]      (0, 0) circle [x radius= 3.35, y radius= 3.35]   ;
			\draw [shift={(429.12,91.1)}, rotate = 107.8] [color={rgb, 255:red, 0; green, 0; blue, 0 }  ][fill={rgb, 255:red, 0; green, 0; blue, 0 }  ][line width=0.75]      (0, 0) circle [x radius= 3.35, y radius= 3.35]   ;
			%Straight Lines [id:da07499411904529663] 
			\draw    (429.12,91.1) -- (383.42,58.14) ;
			\draw [shift={(383.42,58.14)}, rotate = 215.8] [color={rgb, 255:red, 0; green, 0; blue, 0 }  ][fill={rgb, 255:red, 0; green, 0; blue, 0 }  ][line width=0.75]      (0, 0) circle [x radius= 3.35, y radius= 3.35]   ;
			\draw [shift={(429.12,91.1)}, rotate = 215.8] [color={rgb, 255:red, 0; green, 0; blue, 0 }  ][fill={rgb, 255:red, 0; green, 0; blue, 0 }  ][line width=0.75]      (0, 0) circle [x radius= 3.35, y radius= 3.35]   ;
			%Straight Lines [id:da8488731386815515] 
			\draw    (337.95,91.42) -- (383.42,58.14) ;
			\draw [shift={(383.42,58.14)}, rotate = 323.8] [color={rgb, 255:red, 0; green, 0; blue, 0 }  ][fill={rgb, 255:red, 0; green, 0; blue, 0 }  ][line width=0.75]      (0, 0) circle [x radius= 3.35, y radius= 3.35]   ;
			\draw [shift={(337.95,91.42)}, rotate = 323.8] [color={rgb, 255:red, 0; green, 0; blue, 0 }  ][fill={rgb, 255:red, 0; green, 0; blue, 0 }  ][line width=0.75]      (0, 0) circle [x radius= 3.35, y radius= 3.35]   ;
			%Straight Lines [id:da19723470724253833] 
			\draw    (355.55,144.95) ;
			\draw [shift={(355.55,144.95)}, rotate = 0] [color={rgb, 255:red, 0; green, 0; blue, 0 }  ][fill={rgb, 255:red, 0; green, 0; blue, 0 }  ][line width=0.75]      (0, 0) circle [x radius= 3.35, y radius= 3.35]   ;
			\draw [shift={(355.55,144.95)}, rotate = 0] [color={rgb, 255:red, 0; green, 0; blue, 0 }  ][fill={rgb, 255:red, 0; green, 0; blue, 0 }  ][line width=0.75]      (0, 0) circle [x radius= 3.35, y radius= 3.35]   ;
			%Straight Lines [id:da16495595077777947] 
			\draw    (337.95,91.42) ;
			\draw [shift={(337.95,91.42)}, rotate = 0] [color={rgb, 255:red, 0; green, 0; blue, 0 }  ][fill={rgb, 255:red, 0; green, 0; blue, 0 }  ][line width=0.75]      (0, 0) circle [x radius= 3.35, y radius= 3.35]   ;
			\draw [shift={(337.95,91.42)}, rotate = 0] [color={rgb, 255:red, 0; green, 0; blue, 0 }  ][fill={rgb, 255:red, 0; green, 0; blue, 0 }  ][line width=0.75]      (0, 0) circle [x radius= 3.35, y radius= 3.35]   ;
			%Straight Lines [id:da4010373859386017] 
			\draw    (383.42,58.14) ;
			\draw [shift={(383.42,58.14)}, rotate = 0] [color={rgb, 255:red, 0; green, 0; blue, 0 }  ][fill={rgb, 255:red, 0; green, 0; blue, 0 }  ][line width=0.75]      (0, 0) circle [x radius= 3.35, y radius= 3.35]   ;
			\draw [shift={(383.42,58.14)}, rotate = 0] [color={rgb, 255:red, 0; green, 0; blue, 0 }  ][fill={rgb, 255:red, 0; green, 0; blue, 0 }  ][line width=0.75]      (0, 0) circle [x radius= 3.35, y radius= 3.35]   ;
			%Straight Lines [id:da956450965441464] 
			\draw    (411.9,144.75) ;
			\draw [shift={(411.9,144.75)}, rotate = 0] [color={rgb, 255:red, 0; green, 0; blue, 0 }  ][fill={rgb, 255:red, 0; green, 0; blue, 0 }  ][line width=0.75]      (0, 0) circle [x radius= 3.35, y radius= 3.35]   ;
			\draw [shift={(411.9,144.75)}, rotate = 0] [color={rgb, 255:red, 0; green, 0; blue, 0 }  ][fill={rgb, 255:red, 0; green, 0; blue, 0 }  ][line width=0.75]      (0, 0) circle [x radius= 3.35, y radius= 3.35]   ;
			%Straight Lines [id:da2216508758459962] 
			\draw    (429.12,91.1) ;
			\draw [shift={(429.12,91.1)}, rotate = 0] [color={rgb, 255:red, 0; green, 0; blue, 0 }  ][fill={rgb, 255:red, 0; green, 0; blue, 0 }  ][line width=0.75]      (0, 0) circle [x radius= 3.35, y radius= 3.35]   ;
			\draw [shift={(429.12,91.1)}, rotate = 0] [color={rgb, 255:red, 0; green, 0; blue, 0 }  ][fill={rgb, 255:red, 0; green, 0; blue, 0 }  ][line width=0.75]      (0, 0) circle [x radius= 3.35, y radius= 3.35]   ;
			%Straight Lines [id:da23671545010024742] 
			\draw    (386.43,90.96) -- (383.42,58.14) ;
			\draw [shift={(383.42,58.14)}, rotate = 264.76] [color={rgb, 255:red, 0; green, 0; blue, 0 }  ][fill={rgb, 255:red, 0; green, 0; blue, 0 }  ][line width=0.75]      (0, 0) circle [x radius= 3.35, y radius= 3.35]   ;
			\draw [shift={(386.43,90.96)}, rotate = 264.76] [color={rgb, 255:red, 0; green, 0; blue, 0 }  ][fill={rgb, 255:red, 0; green, 0; blue, 0 }  ][line width=0.75]      (0, 0) circle [x radius= 3.35, y radius= 3.35]   ;
			%Straight Lines [id:da9664789189534272] 
			\draw    (355.55,144.95) -- (378.43,124.96) ;
			\draw [shift={(378.43,124.96)}, rotate = 318.86] [color={rgb, 255:red, 0; green, 0; blue, 0 }  ][fill={rgb, 255:red, 0; green, 0; blue, 0 }  ][line width=0.75]      (0, 0) circle [x radius= 3.35, y radius= 3.35]   ;
			\draw [shift={(355.55,144.95)}, rotate = 318.86] [color={rgb, 255:red, 0; green, 0; blue, 0 }  ][fill={rgb, 255:red, 0; green, 0; blue, 0 }  ][line width=0.75]      (0, 0) circle [x radius= 3.35, y radius= 3.35]   ;
			%Straight Lines [id:da4521437057170371] 
			\draw    (378.43,124.96) -- (386.43,90.96) ;
			\draw [shift={(386.43,90.96)}, rotate = 283.24] [color={rgb, 255:red, 0; green, 0; blue, 0 }  ][fill={rgb, 255:red, 0; green, 0; blue, 0 }  ][line width=0.75]      (0, 0) circle [x radius= 3.35, y radius= 3.35]   ;
			\draw [shift={(378.43,124.96)}, rotate = 283.24] [color={rgb, 255:red, 0; green, 0; blue, 0 }  ][fill={rgb, 255:red, 0; green, 0; blue, 0 }  ][line width=0.75]      (0, 0) circle [x radius= 3.35, y radius= 3.35]   ;
			%Straight Lines [id:da30746948088812354] 
			\draw    (489.55,145.95) -- (471.95,92.42) ;
			\draw [shift={(471.95,92.42)}, rotate = 251.8] [color={rgb, 255:red, 0; green, 0; blue, 0 }  ][fill={rgb, 255:red, 0; green, 0; blue, 0 }  ][line width=0.75]      (0, 0) circle [x radius= 3.35, y radius= 3.35]   ;
			\draw [shift={(489.55,145.95)}, rotate = 251.8] [color={rgb, 255:red, 0; green, 0; blue, 0 }  ][fill={rgb, 255:red, 0; green, 0; blue, 0 }  ][line width=0.75]      (0, 0) circle [x radius= 3.35, y radius= 3.35]   ;
			%Straight Lines [id:da4700528858249797] 
			\draw    (489.55,145.95) -- (545.9,145.75) ;
			\draw [shift={(545.9,145.75)}, rotate = 359.8] [color={rgb, 255:red, 0; green, 0; blue, 0 }  ][fill={rgb, 255:red, 0; green, 0; blue, 0 }  ][line width=0.75]      (0, 0) circle [x radius= 3.35, y radius= 3.35]   ;
			\draw [shift={(489.55,145.95)}, rotate = 359.8] [color={rgb, 255:red, 0; green, 0; blue, 0 }  ][fill={rgb, 255:red, 0; green, 0; blue, 0 }  ][line width=0.75]      (0, 0) circle [x radius= 3.35, y radius= 3.35]   ;
			%Straight Lines [id:da7102180211016452] 
			\draw    (563.12,92.1) -- (555.11,117.05) -- (545.9,145.75) ;
			\draw [shift={(545.9,145.75)}, rotate = 107.8] [color={rgb, 255:red, 0; green, 0; blue, 0 }  ][fill={rgb, 255:red, 0; green, 0; blue, 0 }  ][line width=0.75]      (0, 0) circle [x radius= 3.35, y radius= 3.35]   ;
			\draw [shift={(563.12,92.1)}, rotate = 107.8] [color={rgb, 255:red, 0; green, 0; blue, 0 }  ][fill={rgb, 255:red, 0; green, 0; blue, 0 }  ][line width=0.75]      (0, 0) circle [x radius= 3.35, y radius= 3.35]   ;
			%Straight Lines [id:da8065989702546641] 
			\draw    (563.12,92.1) -- (517.42,59.14) ;
			\draw [shift={(517.42,59.14)}, rotate = 215.8] [color={rgb, 255:red, 0; green, 0; blue, 0 }  ][fill={rgb, 255:red, 0; green, 0; blue, 0 }  ][line width=0.75]      (0, 0) circle [x radius= 3.35, y radius= 3.35]   ;
			\draw [shift={(563.12,92.1)}, rotate = 215.8] [color={rgb, 255:red, 0; green, 0; blue, 0 }  ][fill={rgb, 255:red, 0; green, 0; blue, 0 }  ][line width=0.75]      (0, 0) circle [x radius= 3.35, y radius= 3.35]   ;
			%Straight Lines [id:da9990328675708291] 
			\draw    (471.95,92.42) -- (517.42,59.14) ;
			\draw [shift={(517.42,59.14)}, rotate = 323.8] [color={rgb, 255:red, 0; green, 0; blue, 0 }  ][fill={rgb, 255:red, 0; green, 0; blue, 0 }  ][line width=0.75]      (0, 0) circle [x radius= 3.35, y radius= 3.35]   ;
			\draw [shift={(471.95,92.42)}, rotate = 323.8] [color={rgb, 255:red, 0; green, 0; blue, 0 }  ][fill={rgb, 255:red, 0; green, 0; blue, 0 }  ][line width=0.75]      (0, 0) circle [x radius= 3.35, y radius= 3.35]   ;
			%Straight Lines [id:da5562456405654405] 
			\draw    (489.55,145.95) ;
			\draw [shift={(489.55,145.95)}, rotate = 0] [color={rgb, 255:red, 0; green, 0; blue, 0 }  ][fill={rgb, 255:red, 0; green, 0; blue, 0 }  ][line width=0.75]      (0, 0) circle [x radius= 3.35, y radius= 3.35]   ;
			\draw [shift={(489.55,145.95)}, rotate = 0] [color={rgb, 255:red, 0; green, 0; blue, 0 }  ][fill={rgb, 255:red, 0; green, 0; blue, 0 }  ][line width=0.75]      (0, 0) circle [x radius= 3.35, y radius= 3.35]   ;
			%Straight Lines [id:da470575210336032] 
			\draw    (471.95,92.42) ;
			\draw [shift={(471.95,92.42)}, rotate = 0] [color={rgb, 255:red, 0; green, 0; blue, 0 }  ][fill={rgb, 255:red, 0; green, 0; blue, 0 }  ][line width=0.75]      (0, 0) circle [x radius= 3.35, y radius= 3.35]   ;
			\draw [shift={(471.95,92.42)}, rotate = 0] [color={rgb, 255:red, 0; green, 0; blue, 0 }  ][fill={rgb, 255:red, 0; green, 0; blue, 0 }  ][line width=0.75]      (0, 0) circle [x radius= 3.35, y radius= 3.35]   ;
			%Straight Lines [id:da1730651546898233] 
			\draw    (517.42,59.14) ;
			\draw [shift={(517.42,59.14)}, rotate = 0] [color={rgb, 255:red, 0; green, 0; blue, 0 }  ][fill={rgb, 255:red, 0; green, 0; blue, 0 }  ][line width=0.75]      (0, 0) circle [x radius= 3.35, y radius= 3.35]   ;
			\draw [shift={(517.42,59.14)}, rotate = 0] [color={rgb, 255:red, 0; green, 0; blue, 0 }  ][fill={rgb, 255:red, 0; green, 0; blue, 0 }  ][line width=0.75]      (0, 0) circle [x radius= 3.35, y radius= 3.35]   ;
			%Straight Lines [id:da548125248169927] 
			\draw    (545.9,145.75) ;
			\draw [shift={(545.9,145.75)}, rotate = 0] [color={rgb, 255:red, 0; green, 0; blue, 0 }  ][fill={rgb, 255:red, 0; green, 0; blue, 0 }  ][line width=0.75]      (0, 0) circle [x radius= 3.35, y radius= 3.35]   ;
			\draw [shift={(545.9,145.75)}, rotate = 0] [color={rgb, 255:red, 0; green, 0; blue, 0 }  ][fill={rgb, 255:red, 0; green, 0; blue, 0 }  ][line width=0.75]      (0, 0) circle [x radius= 3.35, y radius= 3.35]   ;
			%Straight Lines [id:da5206016168558385] 
			\draw    (563.12,92.1) ;
			\draw [shift={(563.12,92.1)}, rotate = 0] [color={rgb, 255:red, 0; green, 0; blue, 0 }  ][fill={rgb, 255:red, 0; green, 0; blue, 0 }  ][line width=0.75]      (0, 0) circle [x radius= 3.35, y radius= 3.35]   ;
			\draw [shift={(563.12,92.1)}, rotate = 0] [color={rgb, 255:red, 0; green, 0; blue, 0 }  ][fill={rgb, 255:red, 0; green, 0; blue, 0 }  ][line width=0.75]      (0, 0) circle [x radius= 3.35, y radius= 3.35]   ;
			%Straight Lines [id:da8775717125820396] 
			\draw    (517.42,59.14) -- (523.43,18.96) ;
			\draw [shift={(523.43,18.96)}, rotate = 278.51] [color={rgb, 255:red, 0; green, 0; blue, 0 }  ][fill={rgb, 255:red, 0; green, 0; blue, 0 }  ][line width=0.75]      (0, 0) circle [x radius= 3.35, y radius= 3.35]   ;
			\draw [shift={(517.42,59.14)}, rotate = 278.51] [color={rgb, 255:red, 0; green, 0; blue, 0 }  ][fill={rgb, 255:red, 0; green, 0; blue, 0 }  ][line width=0.75]      (0, 0) circle [x radius= 3.35, y radius= 3.35]   ;
			%Straight Lines [id:da776040593671999] 
			\draw    (473.43,39.96) -- (523.43,18.96) ;
			\draw [shift={(523.43,18.96)}, rotate = 337.22] [color={rgb, 255:red, 0; green, 0; blue, 0 }  ][fill={rgb, 255:red, 0; green, 0; blue, 0 }  ][line width=0.75]      (0, 0) circle [x radius= 3.35, y radius= 3.35]   ;
			\draw [shift={(473.43,39.96)}, rotate = 337.22] [color={rgb, 255:red, 0; green, 0; blue, 0 }  ][fill={rgb, 255:red, 0; green, 0; blue, 0 }  ][line width=0.75]      (0, 0) circle [x radius= 3.35, y radius= 3.35]   ;
			%Straight Lines [id:da831030969485757] 
			\draw    (473.43,39.96) -- (517.42,59.14) ;
			\draw [shift={(517.42,59.14)}, rotate = 23.55] [color={rgb, 255:red, 0; green, 0; blue, 0 }  ][fill={rgb, 255:red, 0; green, 0; blue, 0 }  ][line width=0.75]      (0, 0) circle [x radius= 3.35, y radius= 3.35]   ;
			\draw [shift={(473.43,39.96)}, rotate = 23.55] [color={rgb, 255:red, 0; green, 0; blue, 0 }  ][fill={rgb, 255:red, 0; green, 0; blue, 0 }  ][line width=0.75]      (0, 0) circle [x radius= 3.35, y radius= 3.35]   ;
			
			% Text Node
			\draw (172,152) node [anchor=north west][inner sep=0.75pt]   [align=left] {one-odd cycle graph};
			% Text Node
			\draw (328,152) node [anchor=north west][inner sep=0.75pt]   [align=left] {fused-odd graph};
			% Text Node
			\draw (462,153) node [anchor=north west][inner sep=0.75pt]   [align=left] {fused-odd graph};

		\end{tikzpicture}

	\end{center}

\caption{Examples of one-odd cycle and fused-odd graphs.}

	\label{asidj123llllk}\label{asopdkakso123123}
	
\end{figure}
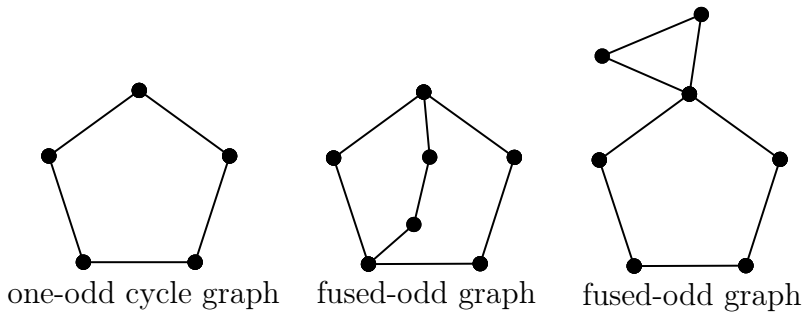

\noindent Suppose now that $G_{1}$ is obtained from $G_{0}$ by adding a
pendant.
Let $k$ denote the length of the path $P$ that connects the two odd cycles
in $G_{1}$.
\begin{itemize}
	\item If $k$ is odd, then we say that $G$ is an \emph{odd-linked} graph;
	see \cref{asidj123llllkasidj123llllk}.
	\item If $k$ is even, then we say that $G$ is an \emph{even-linked} graph;
	see \cref{asidj123llllkasidj123llllk}.
\end{itemize}

\noindent In the case of an odd(even)-linked graph, note that in the
ear-pendant decomposition of $G$ no further pendants are added after
$G_{1}$, since this would generate an additional odd cycle.
Therefore, for $i=2,\dots,p$, the graph $G_{i}$ is obtained from $G_{i-1}$
by adding an odd ear.
Moreover, both end-points of each added odd ear are contained in the path
$P$; otherwise, it is easy to see that a new odd cycle is created (see
\cref{asidj123llllkasidj123llllkasidj123llllk}).

\begin{figure}[H]
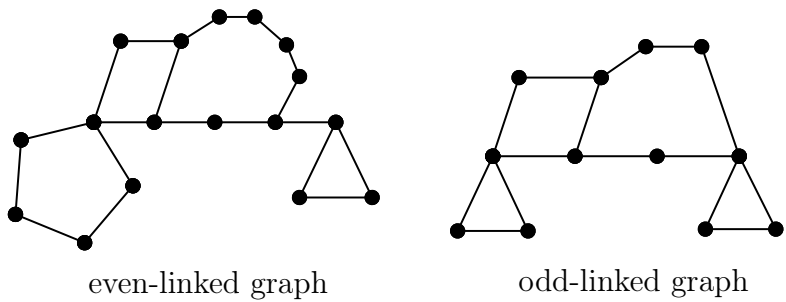

	
	\begin{center}

		\tikzset{every picture/.style={line width=0.75pt}} %set default line width to 0.75pt        
		
		% [inline block 0: 2 envs, 95029 chars in 2 pieces, piece 1 here, a bare % at each other -> data_tex | \begin{tikzpicture}[x=0.75pt,y=0.75pt,yscale=-1,xscale=1] 			%uncomment if require: \path (0,300); %set diagram left sta...]


	\end{center}

\caption{Example of even-linked and odd-linked graphs.}
`

	\label{asidj123llllkasidj123llllk}
	
\end{figure}

\begin{figure}[H]
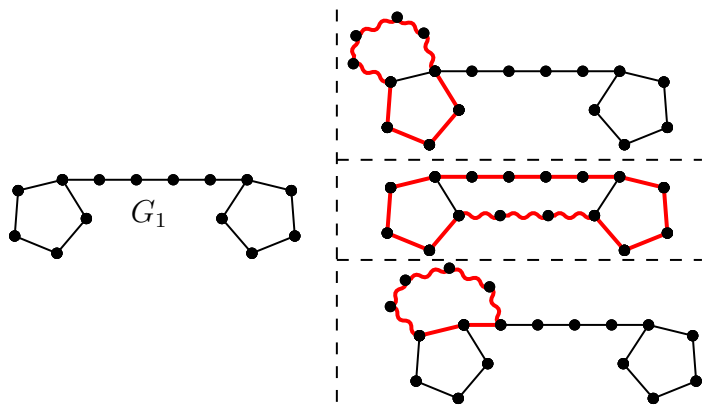

	
	\begin{center}

	\tikzset{every picture/.style={line width=0.75pt}} %set default line width to 0.75pt        
	
	%

	\end{center}

\caption{Improper addition of an odd ear produces additional odd cycles:
	three representative configurations.}

	\label{asidj123llllkasidj123llllkasidj123llllk}
	
\end{figure}

The structure of one-odd cycle and fused-odd graphs is relatively simple.
By contrast, odd-linked and even-linked graphs exhibit a strong analogy with
classical bipartite matching-covered graphs.

These four families exhaust all $2$-bicritical graphs with at most two odd
cycles and correspond to fundamentally different behaviors of maximum
independent sets and matchings.

The study of bipartite matching covered graphs has a long history: K\H{o}nig already
used this concept in 1915 while analyzing determinant decompositions
\cite{konig1915}, and nearly half a century later Hetyei formalized the term
\emph{elementary} instead bipartite matching covered and developed a classical characterization \cite{hetyei1964}.
It should be noted that several results concerning matching covered graphs,
such as those in \cite{LP}, are presented in the more general context of bipartite
matching covered graphs.

\medskip

In this paper, however, we will work with the following equivalent notion,
which is more convenient for our purposes. A graph $G$ is called a \emph{bipartite matching covered graph} if $G$ is
connected, bipartite, and every edge of $G$ belongs to a perfect matching.

\medskip

Bipartite matching covered graphs admit a particularly elegant structural
description in terms of ear decompositions, due to Lovász.

\begin{theorem}[\cite{lovasz1983ear}\label{oi12j3j123ijj}]
	Given any bipartite matching covered graph $G$,
	there exist a odd length ear decomposition
	\[
	G_{1},G_{2},\dots,G_{r}=G
	\]
	\noindent of matching covered subgraphs of $G$ where $G_{1}=K_{2}$.
\end{theorem}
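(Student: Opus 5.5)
The plan is to build the ear decomposition greedily, keeping a fixed perfect matching $M$ of $G$ as a guide. Throughout, I maintain a subgraph $H\subseteq G$ with three properties: $H$ is a bipartite matching covered graph, $M\cap E(H)$ is a perfect matching of $H$, and consequently $M$ restricted to $V(G)-V(H)$ is a perfect matching of $G-V(H)$. I start with $H=G_{1}=K_{2}$ spanned by any edge $e\in M$, which clearly has these properties. The step is to show that whenever $H\neq G$ one can add an odd ear to $H$ and keep all three properties. Since each step adds at least one edge, the process stops at $G_{r}=G$.

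\textbf{Finding the ear.} Suppose $H\neq G$.

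If some edge $xy\in E(G)-E(H)$ has both ends in $V(H)$, I take $P=xy$, an ear of length $1$.

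Otherwise, since $G$ is connected, there is an edge $uv$ with $u\in V(H)$ and $v\notin V(H)$. Because $G$ is matching covered, $uv$ lies in a perfect matching $N$. The component of $M\triangle N$ containing $uv$ is an $M$/$N$-alternating cycle $C$. Note that $M(u)\in V(H)$, so $uv\notin M$, and the next vertex $M(v)$ on $C$ is outside $H$, since $M$ matches $V(G)-V(H)$ within itself.

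I walk along $C$ from $u$ through $v$ until the first vertex $x\neq u$ that lies in $V(H)$; such an $x$ exists because $C$ returns to $u$. Let $P$ be the resulting $u$--$x$ path. Its internal vertices lie outside $H$, and they are closed under $M$ by construction. Hence $P$ starts and ends with non-$M$ edges and alternates, so it has odd length. So $P$ is an odd ear.

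\textbf{Preserving the invariants.} Let $H'=H+P$.

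First, $H'$ is bipartite, being a subgraph of $G$.

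Second, $M\cap E(H')$ is a perfect matching of $H'$: it is $M\cap E(H)$ together with the $M$-edges on the interior of $P$.

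Third, every edge of $H$ still lies in a perfect matching of $H'$. Take a perfect matching of $H$ containing that edge and add the interior $M$-edges of $P$.

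Fourth, the non-$M$ edges of $P$ (including its two end edges) must lie in a perfect matching of $H'$. For this I need a perfect matching of $H-u-x$, which I then combine with the non-$M$ edges of $P$. Since $P$ is odd and $G$ is bipartite, $u$ and $x$ lie in opposite colour classes $A$ and $B$ of $H$.

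\textbf{Main obstacle: the lemma.} The step I expect to be the main obstacle is the following lemma.

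If $H$ is a connected bipartite matching covered graph with classes $A,B$, and $u\in A$, $x\in B$, then $H-u-x$ has a perfect matching.

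I would prove it via Hall's theorem. It suffices to show $|N_H(S)|\ge |S|+1$ for every nonempty $S\subsetneq A$. Suppose instead $|N(S)|=|S|$ for such an $S$. Then every perfect matching of $H$ matches $S$ onto $N(S)$. Consequently, no edge from $N(S)$ to $A-S$ lies in any perfect matching. By matching coveredness, no such edge exists. Hence $S\cup N(S)$ is a union of components of $H$, which contradicts connectivity since $S\ne\emptyset$ and $S\neq A$.

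With the strict Hall condition, delete $u$ and $x$. For $\emptyset\ne S\subseteq A-u$, we get $|N(S)-x|\ge |S|$. If $S=A-u$ this also holds, because $|B|=|A|$ and every vertex of $B$ other than $x$ is adjacent to $A-u$ (again by connectivity and the strict condition applied with $S=A-u$). So Hall's condition holds in $H-u-x$, giving the required perfect matching.

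Finally, $H'$ is connected. This completes the induction and yields the sequence $G_{1}=K_{2},\dots,G_{r}=G$ of matching covered subgraphs, each obtained from the previous one by adding an odd ear.
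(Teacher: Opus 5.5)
The paper gives no proof of this statement. It is quoted from Lov\'asz and used as a black box, so there is no in-paper argument to compare with. Your proof is correct, and it is essentially the standard one (as in Lov\'asz--Plummer): fix a perfect matching $M$, grow a matching covered subgraph $H$ whose complement stays perfectly matched by $M$, and cut each new ear out of the $M$/$N$-alternating cycle through an edge leaving $H$.

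Two places deserve tightening. First, the existence of $x\neq u$ does not follow just from ``$C$ returns to $u$''. Say instead that both the $N$-edge $uv$ and the $M$-edge $uM(u)$ lie on $C$, and $M(u)\in V(H)\setminus\{u\}$, so the walk meets $V(H)$ again at the latest at $M(u)$. This is the same fact you use for the parity of $P$: no $M$-edge joins $V(H)$ to its complement. Second, in the Hall step the separate case $S=A-u$ is superfluous. Every nonempty $S\subseteq A-u$ is already a proper subset of $A$, so $|N(S)\setminus\{x\}|\ge |N(S)|-1\ge |S|$ directly.

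Your lemma is exactly the chain (5)$\Rightarrow$(3)$\Rightarrow$(4) of Hetyei's theorem, \cref{hetyei}, which the paper already states, so you could simply cite it. You also tacitly assume that $G$ has an edge. The paper's definition vacuously admits $K_1$, for which the statement is false.

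Your route also gives more than the statement. The invariant-preservation step uses nothing about how $P$ was found, except that $P$ is an odd ear and $H+P$ is bipartite. For an arbitrary such ear, replace $M$ by any perfect matching of $H$ together with the edges of $P$ in even positions, and the same argument goes through. So you have also proved the converse: a bipartite graph with an odd ear decomposition starting from $K_2$ is matching covered. That converse is what the paper actually invokes in the proof of \cref{proposition}, even though the theorem as quoted asserts only the forward direction.
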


In the bipartite setting, odd ear decompositions starting from $K_2$ characterize matching covered graphs.

\medskip
\medskip

More precisely, let $G$ be an odd(even)-linked graph with an ear-pendant
decomposition
\[
G_{0},\dots,G_{p}=G,
\]
\noindent where $G_{0}$ is an odd cycle and $G_{1}$ is obtained
from $G_{0}$ by adding a pendant. Let $P$ be the path that connects
the two cycles of $G_{1}$. Let $G^{\P}$ be the graph obtained from
$G$ by deleting the vertices of the cycles of $G_{1}$ that do not belong
to $V(P)$. Then the following result holds.

$ $

\begin{proposition}\label{proposition}
	If $G$ is an odd(even)-linked graph, then there exists a bipartite
	matching-covered graph $H$ such that $G^{\P}$ is obtained from $H$
	by deleting two (respectively, one) vertices.
\end{proposition}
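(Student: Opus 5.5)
The plan is to show first that $G^{\P}$ is bipartite and has a very explicit shape. Then we close it up into a bipartite matching covered graph $H$ by adding two new vertices (odd-linked case) or one new vertex (even-linked case). Finally we verify the matching covered property along an odd ear decomposition, in the spirit of \cref{oi12j3j123ijj}.

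\textbf{Step 1: the shape of $G^{\P}$.} By the discussion preceding the statement, $G=G_p$ is obtained from $G_1$ by adding only odd ears, and every ear has both endpoints on $P$. Hence $G^{\P}$ consists of the path $P=u\cdots v$ together with these ears, where $u$ lies on the first cycle and $v$ on the second. Fix the bipartition of $P$ by parity of position.

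\textbf{Step 2: ears join opposite classes, so $G^{\P}$ is bipartite.} Suppose an ear of odd length $\ell$ has endpoints $a,b\in V(P)$ at distance $d$ along $P$. Together with the segment of $P$ between $a$ and $b$, it forms a cycle of length $\ell+d$. This cycle must be even, since $G$ has only two odd cycles, so $d$ is odd and $a,b$ lie in opposite classes. Inductively, each intermediate graph $P\cup(\text{ears added so far})$ is bipartite with the same bipartition. More precisely, the ear's internal vertices can be 2-coloured consistently because the ear has odd length and its ends have different colours.

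\textbf{Step 3: construct $H$.}
\begin{itemize}
\item If $k$ is odd, $u$ and $v$ lie in opposite classes. Add new vertices $x,y$ with edges $xu$, $yv$, $xy$. Then $x\,u\cdots v\,y\,x$ is a cycle of length $k+3$, which is even. The graph stays bipartite, with $x$ in the class of $v$ and $y$ in the class of $u$.
\item If $k$ is even, $u$ and $v$ lie in the same class. Add one new vertex $x$ adjacent to $u$ and $v$, producing the even cycle $x\,u\cdots v\,x$ of length $k+2$.
\end{itemize}
In both cases $G^{\P}$ is $H$ minus the added vertices. Moreover, $H$ has an odd ear decomposition: start from an edge, add the rest of the even cycle as one odd ear, and then add the ears of $G$ in their original order.

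\textbf{Step 4: $H$ is bipartite matching covered.} Prove by induction along this decomposition that each stage $H_i$ is bipartite matching covered. An even cycle clearly is. For the inductive step, let $H_i=H_{i-1}+Q$, where $Q$ is an odd ear with ends $a,b$ in opposite classes.
\begin{itemize}
\item An old edge $e$ is covered by taking a perfect matching of $H_{i-1}$ through $e$ and adding every second edge of $Q$ on its internal vertices, of which there are an even number.
\item An edge of $Q$ needs a perfect matching of $H_{i-1}-a-b$, extended by the alternating matching on $Q$ that covers $a$ and $b$. Both parity patterns on $Q$ are available, so every ear edge gets covered.
\end{itemize}

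\textbf{Main obstacle.} The crux is the existence of a perfect matching in $H_{i-1}-a-b$ for $a,b$ in opposite classes. I would invoke Hetyei's characterization of elementary bipartite graphs \cite{hetyei1964}: a connected bipartite graph with a perfect matching is elementary iff deleting any two vertices from opposite classes leaves a graph with a perfect matching. Alternatively, I would prove it directly. Take a perfect matching $M$ and use an $M$-alternating path from $a$ to $b$ whose first and last edges are in $M$; the symmetric difference along it yields the required matching. Such a path exists by connectivity and because every edge lies in some perfect matching.
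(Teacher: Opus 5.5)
Your proof is correct and uses the same construction as the paper. You close $P$ into an even cycle with fictitious vertices, continue with the ears of $G$ to get a bipartite odd ear decomposition starting from $K_2$, and conclude that $H$ is matching covered. Two differences are worth pointing out, both in your favour.

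First, your case split is the right one. For $k$ odd the ends of $P$ lie in opposite classes and $G'$ has even order, so two new vertices are needed; for $k$ even one suffices. The paper's written proof has these reversed. In the odd-linked case it adds a single vertex adjacent to both ends of $P$, which closes an odd cycle of length $k+2$ (the first ear then has even length $k+1$). It then claims two vertices for the even case. This contradicts both the statement and the later use in \cref{12oij3}, where a single vertex adjacent to both ends of $P$ is added in the even-linked case.

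Second, the paper deduces matching-coveredness from \cref{oi12j3j123ijj}. As stated, that result only gives the converse implication (matching covered $\Rightarrow$ odd ear decomposition). Your Step~4 proves the implication actually needed, by induction using (5)$\Rightarrow$(4) of \cref{hetyei}.

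One small repair is needed in Steps 1--2. The assertion, taken from the paper's discussion, that every ear has both ends on $P$ is not literally true, because an ear may end at an interior vertex of an earlier ear. For example, take $P=p_0p_1p_2p_3$, add the ear $p_0q_1q_2p_3$, and then add the single edge $q_1p_2$. This creates no new odd cycle, yet the last ear ends at $q_1\notin V(P)$. What is true is that no ear meets $(V(C)\cup V(C'))\setminus\{u,v\}$. Your parity argument still works if you close the ear with any $a$--$b$ path in the current connected bipartite graph, instead of the segment of $P$. The resulting cycle lies in $G'$, so it is neither $C$ nor $C'$; hence it is even, and $a,b$ lie in opposite classes.
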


\begin{proof}
	The proof is essentially an application of Lovász's ear-decomposition
	of bipartite matching-covered graphs. Suppose that $G$ is an odd-linked
	graph; the even case is similar.
	
	\medskip
	
	Note that $G^{\P}$ is bipartite and is obtained from $P$ by adding a
	sequence of $p-1$ odd ears.
	
	\[
	P,P_{2},\dots,P_{p}=G^{\P},
	\]
	
	\noindent where $P_{i}$ is obtained from $P_{i-1}$ by adding an odd ear,
	for each $i=2,\dots,p$. Note that this is achieved by choosing
	$P_{i}=G_{i}-(V(G)-V(G^{\P}))$.
	
	Add a fictitious vertex $x\notin V(G^{\P})$, and let $y$ and $z$ be the
	end-points of $P$. We define
	$P_{0}^{\P}:=(\{x,y\},\{xy\})\iso K_{2}$ and
	$P^{\P}=\left(P_{0}^{\P}\u P\right)+xz$. Then,
	
	\[
	P^{\P}_{0},P^{\P},P^{\P}\u P_{2},\dots,P^{\P}\u P_{p}=P^{\P}\u G^{\P},
	\]
	
	\noindent is an odd-ear decomposition starting from $K_{2}$. Hence,
	by \cref{oi12j3j123ijj}, the graph
	\[
	H:=P^{\P}\u G^{\P}
	\]
	is a bipartite matching-covered graph, and it satisfies $H-x=G^{\P}$;
	see \cref{asiodjh12} for a simple illustration of this proof.
	
	The even case is proved using a similar argument, but adding two
	fictitious vertices instead of one.
\end{proof}

\begin{figure}[H]
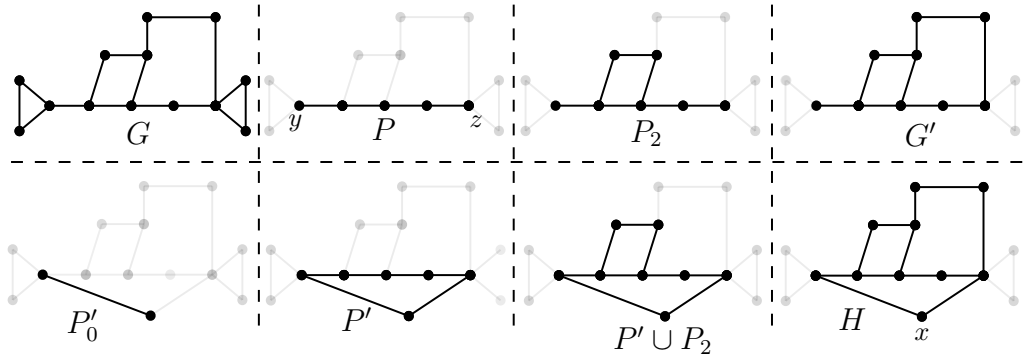

	
	\begin{center}

			\tikzset{every picture/.style={line width=0.75pt}} %set default line width to 0.75pt        
		
		% [inline block 1: 1 envs, 92275 chars -> data_tex | \begin{tikzpicture}[x=0.75pt,y=0.75pt,yscale=-0.9,xscale=0.87] 			%uncomment if require: \path (0,406); %set diagram lef...]


	\end{center}

\caption{An illustration of the proof of \cref{oi12j3j123ijj}}

	\label{asiodjh12}
	
\end{figure}

\medskip
\medskip

In the following sections, we use this classification to explicitly compute
$\core G$, $\corona G$, and $\alpha(G)$ for each family, and to study the
structural relationships among them.

\section{Bicyclic even-linked graphs}\label{sss3}

In this section, we determine explicitly $\alpha(G)$, $\core G$, and
$\corona G$ for $2$-bicritical even-linked graphs. We begin by establishing some auxiliary results needed for the main proofs.

\begin{theorem}
	[\label{19}\cite{berge2005some}] An independent set
	$S$ is maximum if and only if every independent set disjoint
	from $S$ can be matched into $S$. 
\end{theorem}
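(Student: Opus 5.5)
For the ``if'' direction I would argue by contradiction, and for the ``only if'' direction I would apply Hall's marriage theorem to the bipartite graph of edges between the two independent sets.

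\emph{Sufficiency.} Suppose every independent set disjoint from $S$ can be matched into $S$, and let $T\in\Omega(G)$. The set $T-S$ is independent and disjoint from $S$, so there is a matching $M$ from $T-S$ into $S$. No vertex of $T-S$ has a neighbour in $S\cap T$, because $T$ is independent. Hence $M$ actually sends $T-S$ injectively into $S-T$, giving $\a{T-S}\le\a{S-T}$. Adding $\a{S\cap T}$ to both sides yields $\a T\le\a S$. Since $T$ is maximum, $S$ is maximum.

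\emph{Necessity.} Let $S$ be a maximum independent set and $I$ an independent set with $I\cap S=\emptyset$. Consider the bipartite graph $B$ with parts $I$ and $S$, whose edges are the edges of $G$ joining $I$ to $S$. A matching of $B$ saturating $I$ is exactly a matching of $I$ into $S$, so by Hall's theorem it suffices to check that $\a{N(J)\cap S}\ge\a J$ for every $J\subseteq I$.

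Suppose instead that some $J\subseteq I$ has $\a{N(J)\cap S}<\a J$, and put $S^{\P}:=(S-N(J))\u J$. This set is independent for three reasons: $S-N(J)\subseteq S$ is independent, $J\subseteq I$ is independent, and by construction no vertex of $S-N(J)$ is adjacent to a vertex of $J$. It is a disjoint union because $J\cap S=\emptyset$. Therefore
\[
\a{S^{\P}}=\a S-\a{N(J)\cap S}+\a J>\a S,
\]
contradicting the maximality of $S$. So Hall's condition holds, and $I$ can be matched into $S$.

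The only step needing care is the choice of the exchange set $S^{\P}$ witnessing a Hall violation. The rest is routine, and I expect no real obstacle.
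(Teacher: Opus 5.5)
Your proof is correct and complete. In the sufficiency direction you rightly note that the partners of $T\setminus S$ must lie in $S\setminus T$ because $T$ is independent. In the necessity direction, a Hall violator $J$ yields the larger independent set $(S\setminus N(J))\cup J$, which is the standard exchange argument.

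The paper gives no proof of this statement; it only cites it from Berge. Your argument is the standard proof of that theorem, so there is no in-paper route to compare against.
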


\begin{theorem}\label{19k}
	Let $G$ be a graph and let $S$ be an independent set. For a vertex $v\in S$,
	we have that $v\in \core G$ and $S$ is a maximum independent set if and only if
	every independent set $T$ disjoint from $S$ can be matched into
	$S\setminus\{v\}$.
\end{theorem}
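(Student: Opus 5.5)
The plan is to reduce both directions to \cref{19}, applied to the graph $G$ and to the graph obtained by removing $v$, together with the basic fact that $v\in\core G$ if and only if $\alpha(G-v)<\alpha(G)$.

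\emph{($\Leftarrow$).} Assume every independent set $T$ with $T\cap S=\emptyset$ can be matched into $S\setminus\{v\}$. Such a matching is in particular a matching into $S$, so \cref{19} gives that $S$ is maximum. Suppose, for contradiction, that $v\notin\core G$. Then some maximum independent set $S'$ misses $v$. Put $T:=S'\setminus S$, which is independent and disjoint from $S$. By hypothesis $T$ can be matched into $S\setminus\{v\}$, and the image of this matching lies in $S\setminus S'$, because $T\cup(S\cap S')=S'$ is independent, so no vertex of $T$ is adjacent to a vertex of $S\cap S'$. Since $v\in S\setminus S'$ is not used, this yields
\[
|S\setminus S'|\ge |T|+1=|S'\setminus S|+1,
\]
hence $|S|>|S'|$, which contradicts the maximality of $S'$.

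\emph{($\Rightarrow$).} Assume $S$ is maximum and $v\in\core G$. Given an independent $T$ disjoint from $S$, apply \cref{19} to the graph $G-v$ and its independent set $S\setminus\{v\}$, which we must first show is maximum in $G-v$. Any independent set of $G-v$ is independent in $G$ and avoids $v$, so it is not maximum in $G$ because $v\in\core G$. Its size is therefore at most $\alpha(G)-1=|S\setminus\{v\}|$. Since $T$ avoids $v$, it is an independent set of $G-v$ disjoint from $S\setminus\{v\}$, and \cref{19} in $G-v$ matches $T$ into $S\setminus\{v\}$. The edges of this matching are edges of $G$, which completes the proof.

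The main subtlety is checking that the matched images stay in $S\setminus S'$ in the first direction. That is why the argument uses $T=S'\setminus S$ rather than all of $S'$, which is not disjoint from $S$. Once this is in place, the counting $|S'\setminus S|<|S\setminus S'|$ finishes the contradiction. Everything else follows directly from \cref{19}.
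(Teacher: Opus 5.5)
Your proof is correct. The forward direction is the paper's argument: show that $S\setminus\{v\}$ is maximum in $G-v$, then apply \cref{19} there.

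The converse takes a slightly different route. The paper observes that the independent sets of $G-v$ disjoint from $S\setminus\{v\}$ are exactly the independent sets of $G$ disjoint from $S$. So the hypothesis is literally the condition of \cref{19} in $G-v$, which gives $\alpha(G-v)=|S|-1$. Since $\alpha(G)\le\alpha(G-v)+1$, this yields at once that $S$ is maximum in $G$ and that deleting $v$ lowers $\alpha$, i.e.\ $v\in\core G$.

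You instead get maximality from \cref{19} in $G$, then rule out a maximum independent set $S'$ avoiding $v$ by the exchange argument with $S'\setminus S$. That exchange argument is the easy direction of \cref{19} done by hand. Applied to an arbitrary independent $S'$, it gives $|S'|\le|S|$, with strict inequality when $v\notin S'$. So your converse is self-contained and does not actually need \cref{19}. The paper's version is shorter and handles both directions the same way, through $G-v$.
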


\begin{proof}
	$(\Rightarrow)$
	Suppose that $v\in \core G$ and that $S$ is a maximum independent set of $G$.
	Since $v\in \core G$, we have
	\[
	\alpha(G-v)=\alpha(G)-1.
	\]
	In particular, $S\setminus\{v\}$ is a maximum independent set of $G-v$.
	
	Now let $T$ be an independent set disjoint from $S$. Then $T$ is also
	an independent set of $G-v$ and is disjoint from $S\setminus\{v\}$.
	By \cref{19}, applied to the graph $G-v$, it follows that
	$T$ can be matched into $S\setminus\{v\}$.
	
	$(\Leftarrow)$
	Suppose now that every independent set $T$ disjoint from $S$
	can be matched into $S\setminus\{v\}$.
	Then, by \cref{19} applied to the graph $G-v$,
	it follows that $S\setminus\{v\}$ is a maximum independent set of $G-v$.
	Consequently,
	\[
	\alpha(G-v)=|S|-1,
	\]
	and therefore $S$ is a maximum independent set of $G$.
	
	Moreover, since the deletion of $v$ reduces the independence number by one,
	it follows that $v$ belongs to every maximum independent set of $G$, that is,
	$v\in \core G$.
\end{proof}

\begin{theorem}[\cite{hetyei1964}\label{hetyei}]
	Let $G$ be a bipartite graph with bipartition $(A,B)$. Then the following 
	statements are equivalent:
	\begin{enumerate}
		\item $G$ is elementary (that is, the union of perfect matchings in it induces a connected
		subgraph).
		\item $G$ has exactly two minimum vertex covers, namely $A$ and $B$.
		\item $\lvert A\rvert = \lvert B\rvert$ and 
		$\lvert S\rvert + 1 \le \lvert N(S)\rvert$ 
		for every non-empty proper subset $S \subset A$.
		\item Either $G = K_{2}$, or $\lvert V(G)\rvert \ge 4$ and for every $v \in A$ and $w \in B$, the graph $G - v - w$ has a perfect matching.
		\item $G$ is matching covered graph (that is $G$ is connected and every edge of $G$ is contained in some perfect matching).
	\end{enumerate}
\end{theorem}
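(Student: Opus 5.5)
The plan is to prove the cycle $(1)\Rightarrow(3)\Rightarrow(4)\Rightarrow(5)\Rightarrow(1)$, and separately $(2)\Leftrightarrow(3)$. Throughout I read $(1)$ as: the union of the perfect matchings is a connected spanning subgraph. In particular, $G$ has a perfect matching, and $G$ is not the edgeless graph on two vertices. The standing tools are Hall's theorem and König's theorem $\tau=\nu$ for bipartite graphs.

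\medskip

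\textbf{From $(1)$ to $(3)$ to $(4)$.}

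For $(1)\Rightarrow(3)$: a perfect matching forces $|A|=|B|$. Let $S\subset A$ be nonempty and proper. Hall's theorem gives $|N(S)|\ge|S|$, so suppose for contradiction that $|N(S)|=|S|$. Every perfect matching then maps $S$ onto $N(S)$. Consequently it maps $A\setminus S$ onto $B\setminus N(S)$, so it contains no edge between $N(S)$ and $A\setminus S$. Edges from $S$ go only into $N(S)$ anyway. Hence no allowed edge joins $S\cup N(S)$ to its complement, and both parts are nonempty. This contradicts the connectivity of the union of perfect matchings. I expect this tight-set argument to be the key step.

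For $(3)\Rightarrow(4)$: if $|A|=1$, then $G=K_2$, since the degenerate edgeless case is excluded. Otherwise take $v\in A$ and $w\in B$, and check Hall's condition for $A\setminus\{v\}$ in $G-v-w$. Any nonempty $S\subseteq A\setminus\{v\}$ is a proper subset of $A$. Therefore
\[
|N(S)\setminus\{w\}|\ge|N(S)|-1\ge|S|,
\]
and $G-v-w$ has a perfect matching.

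\medskip

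\textbf{From $(4)$ to $(5)$ to $(1)$.}

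For $(4)\Rightarrow(5)$: for an edge $vw$, adding $vw$ to a perfect matching of $G-v-w$ gives a perfect matching containing $vw$. For connectivity, suppose $K$ is a component not containing everything. Applying $(4)$ to some $v\in A\setminus K$ and $w\in B\setminus K$ shows that $K$ has a perfect matching, so $|A\cap K|=|B\cap K|$. Applying $(4)$ with $v\in A\cap K$ instead would force $|A\cap K|-1=|B\cap K|$, which is a contradiction. The small cases, where such vertices do not exist on one side, are handled by choosing $v,w$ inside $K$ and arguing symmetrically.

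The step $(5)\Rightarrow(1)$ is immediate, since the union of perfect matchings is all of $G$.

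\medskip

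\textbf{Equivalence of $(2)$ and $(3)$.}

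Under either condition $|A|=|B|$. Under $(2)$ this holds because both $A$ and $B$ are minimum covers, so they have the same size. Under $(3)$ it is part of the hypothesis. By König's theorem, and Hall's theorem for the matching, the minimum cover size is then $|A|$.

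Now describe an arbitrary vertex cover $C$ by $S:=A\setminus C$. Then $C\supseteq (A\setminus S)\cup N(S)$, and the minimal such cover has size $|A|-|S|+|N(S)|$. This equals $|A|$ exactly when $|N(S)|=|S|$. The choices $S=\emptyset$ and $S=A$ give the covers $A$ and $B$. Any other minimum cover corresponds to a nonempty proper tight set $S$. Hence "only $A$ and $B$" is equivalent to the strict inequality $|N(S)|\ge|S|+1$ for all nonempty proper $S$.
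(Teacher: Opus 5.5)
The paper does not prove this theorem; it quotes it from Hetyei (it is Theorem 4.1.1 in Lov\'asz--Plummer), so there is no in-paper argument to compare with. Your proof is correct and follows the standard textbook route. The tight-set argument gives $(1)\Rightarrow(3)$, Hall's theorem gives $(3)\Rightarrow(4)$, and parametrising a vertex cover $C$ by $S=A\setminus C$, together with K\H{o}nig's theorem, gives $(2)\Leftrightarrow(3)$. I checked each of these steps and they go through.

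Two points should be made explicit.

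\emph{A standing hypothesis is needed.} Excluding degenerate graphs is not optional, because the statement as written is false for them. The edgeless graph on two vertices satisfies $(3)$ vacuously but none of the other conditions. $K_1$ satisfies $(5)$ as literally worded. So state a hypothesis such as ``$G$ has at least one edge.'' You also use this hypothesis silently in $(3)\Rightarrow(2)$: Hall's condition for $S=A$ follows from $(3)$ via $S=A\setminus\{a\}$ only when $|A|\ge 2$.

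\emph{The small cases in $(4)\Rightarrow(5)$.} The connectivity part can be closed as follows. If $|V(G)|\ge 4$, then $(4)$ forces $|A|=|B|\ge 2$. So for any vertex $u$ you can choose $v\in A$ and $w\in B$ both different from $u$. Applying $(4)$ to this pair shows $u$ is not isolated, hence every component $K$ meets both sides. Now take some $u\notin K$ with $u\in B$. The choice $v\in A\cap K$, $w\in B\cap K$ forces $|A\cap K|=|B\cap K|$. The choice $v\in A\cap K$, $w=u$ forces $|A\cap K|-1=|B\cap K|$, a contradiction. The case $u\in A$ is symmetric.
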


\begin{theorem}\label{12oij3}
	Let $G$ be an even-linked graph of order $n$, let $C$ and $C^{\P}$ be the
	unique odd cycles of $G$, and let $X$ be the set of vertices in
	$V(C)\u V(C^{\P})$ having degree $2$ in $G$. Then $G-X$ is a graph with
	bipartition $(A,B)$ such that $|A|=|B|+1$. Moreover,
	\begin{itemize}
		\item $\core G=B$, 
		\item $\alpha(G)=\frac{n-1}{2}$,
		\item $\a{\core G}=\frac{n-\a{V(C)\u V(C^{\P})}+1}{2}$.
	\end{itemize}
\end{theorem}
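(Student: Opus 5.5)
The plan is to identify $X$ explicitly, read the bipartition of $G-X$ off \cref{proposition}, and then get $\alpha(G)$ and $\core G$ from one counting argument.

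\textbf{Step 1: the shape of $G-X$.} Let the ear-pendant decomposition start with $G_{0}=C$, and let $G_{1}$ add the pendant consisting of $C^{\P}$ and the path $P$. The path $P$ joins $y\in V(C)$ to $z\in V(C^{\P})$ and has even length $k$. As noted in \cref{sss2}, every later odd ear has both ends on $P$. Hence every vertex of $V(C)\u V(C^{\P})$ other than $y,z$ has degree $2$, while $y$ and $z$ have degree at least $3$. So $X=(V(C)\u V(C^{\P}))\setminus\{y,z\}$ and $G-X=G^{\P}$.

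Write $|V(C)|=2a+1$ and $|V(C^{\P})|=2a^{\P}+1$. Then $C-y$ and $C^{\P}-z$ are paths on $2a$ and $2a^{\P}$ vertices.

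By \cref{proposition}, $G^{\P}=H-x$ for a bipartite matching covered graph $H$, where the fictitious vertex $x$ is adjacent exactly to $y$ and $z$. Since $k$ is even, $y$ and $z$ lie in the same colour class; call it $A$. Then $x$ lies in the other class $B\u\{x\}$ of $H$. A perfect matching of $H$ gives $|A|=|B|+1$, which proves the first assertion.

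\textbf{Step 2: $\alpha(G)=\frac{n-1}{2}$.} Counting vertices gives $n=|A|+|B|+2a+2a^{\P}=2|B|+1+2a+2a^{\P}$, so $\frac{n-1}{2}=|B|+a+a^{\P}$.

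For the lower bound, take $B$ together with $a$ independent vertices of the path $C-y$ and $a^{\P}$ independent vertices of $C^{\P}-z$. This set is independent, because $y,z\notin B$ and the only edges leaving $X$ go to $y$ or $z$.

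For the upper bound, let $S$ be any independent set and split its count into three parts.
\begin{itemize}
\item An odd cycle gives $|S\cap V(C)|\le a$ and $|S\cap V(C^{\P})|\le a^{\P}$.
\item Let $S^{\P}=S\cap (V(G^{\P})\setminus\{y,z\})$ and $T=S^{\P}\cap A$. If $T\neq\emptyset$, Hetyei's condition (3) in \cref{hetyei} applied to $H$ gives $|N_H(T)|\ge |T|+1$.
\item Since $x$ is adjacent only to $y,z\notin T$, we have $N_H(T)\subseteq B$, and $S^{\P}\cap B$ avoids $N_H(T)$. Hence $|S^{\P}|\le |T|+|B|-|T|-1=|B|-1$ when $T\ne\emptyset$, and $|S^{\P}|\le|B|$ always.
\end{itemize}
Adding up gives $|S|\le |B|+a+a^{\P}$.

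\textbf{Step 3: $\core G=B$.} The counting in Step 2 shows that equality forces $T=\emptyset$ and $S^{\P}=B$. Therefore $B\subseteq S$ for every maximum independent set $S$, that is, $B\subseteq\core G$.

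For the reverse inclusion, use the structure of a maximum $S$. The set $B$ meets $N(y)$, since $y$ has at least one neighbour on $P$ and that neighbour lies in $B$; similarly $B$ meets $N(z)$. So $y,z\notin S$. Then $S\cap V(C)$ must be an independent $a$-subset of the path $C-y$ on $2a$ vertices. There are at least two such subsets (the odd-indexed and the even-indexed vertices), and together they miss every vertex of the path, so no vertex of $C-y$ lies in the core. The same holds for $C^{\P}-z$.

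Combining the two inclusions gives $\core G=B$. Then
\[
|\core G|=|B|=\frac{n-(2a+1)-(2a^{\P}+1)+1}{2}=\frac{n-|V(C)\u V(C^{\P})|+1}{2}.
\]

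\textbf{Expected obstacle.} The delicate point is Step 1: we need the even case of \cref{proposition} in the concrete form used above. That form is $H=G^{\P}+x$, with $x$ adjacent exactly to $y$ and $z$, and with $A$ and $B\u\{x\}$ as the colour classes of $H$. I would verify this directly: $P+x$ is an even cycle, so it is matching covered, and each odd ear with both ends on $P$ preserves matching-coveredness by \cref{oi12j3j123ijj}. Everything after that is the Hall-surplus estimate from \cref{hetyei}.
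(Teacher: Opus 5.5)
Your proof is correct, but it obtains $\alpha(G)$ and $B\subseteq\core G$ by a different mechanism from the paper.

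The paper fixes the candidate $S=S^{\P}\u B$ and certifies it through \cref{19k}. For each $v\in B$ it takes a perfect matching of your $H$ with $v$ and one endpoint of $P$ deleted (Hetyei's condition (4)), and discards the edge at the fictitious vertex. It then combines this with matchings inside $C$ and $C^{\P}$ to match every independent set disjoint from $S$ into $S\setminus\{v\}$. Maximality of $S$ and $v\in\core G$ come out together, and $\alpha(G)$ is then just $|S|$.

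You instead bound every independent set directly. You use the Hall surplus of Hetyei's condition (3) applied to $S\cap(A\setminus\{y,z\})$, add the odd-cycle bound, and read $B\subseteq S$ off the equality case. Your route has three advantages:
\begin{itemize}
\item It is more elementary: neither \cref{19} nor \cref{19k} is needed.
\item It makes $\core G\subseteq B$ fully explicit, since the equality case already shows that every maximum independent set misses all of $A$. The paper treats only $X$ at this point and leaves the exclusion of $A$ implicit until it notes $N(B)=A$ in \cref{12oij312oij3}.
\item Deducing $|A|=|B|+1$ from a perfect matching of $H$ is tighter than the paper's parity remark, which on its own only gives $|A|\neq|B|$.
\end{itemize}
What the paper's approach buys is a matching certificate built from perfect matchings of $H$ minus two vertices. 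The paper reuses this same tool in \cref{12oiu3h129u3} to determine the Gallai--Edmonds decomposition.
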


\begin{proof}
	Let $G_{0},G_{1},\dots,G_{p}=G$ be an ear-pendant decomposition of $G$.
	Then $G_{0}$ is an odd cycle and $G_{1}$ is obtained from $G_{0}$
	by adding a pendant. Moreover, for every $i=2,\dots,p$,
	the graph $G_{i}$ is obtained from $G_{i-1}$ by adding an odd ear.
	Observe that none of these ears uses vertices from $X$.
	Consequently, the graph
	\[
	G^{\P}:=G-X
	\]
	is bipartite and connected; let $(A,B)$ be a bipartition of $G^{\P}$.
	
	Arguing as in the proof of \cref{proposition}, the graph $G^{\P}$
	admits the following decomposition:
	\[
	P=P_{1},P_{2},\dots,P_{p}=G^{\P},
	\]
	where each $P_{i}$ is obtained from $P_{i-1}$ by adding an odd ear,
	and $P$ is the path that connects both cycles in $G_{1}$.
	Since $P$ has even length, adding each odd ear introduces
	an even number of new vertices. Therefore, $G-X$ has odd order
	and, without loss of generality, we may assume that
	\[
	|A|=|B|+1.
	\]
	
	\medskip
	
	Let $x$ and $y$ be the end-points of $P$; note that $x,y\in A$.
	Let $S^{\P}$ be a maximum independent set of $C\cup C^{\P}$ that
	contains neither $x$ nor $y$. We define
	\[
	S:=S^{\P}\cup B.
	\]
	
	We show that $S$ is a maximum independent set of $G$
	using \cref{19k}. Indeed, fixing $v\in B$ and given any
	independent set $T$ disjoint from $S$, we prove that $T$ can
	be matched into $S\setminus\{v\}$. In this way, it follows that
	$S$ is a maximum independent set of $G$ and that
	$B\subseteq \core G.$
	
	\begin{claim}
		The set $T$ can be matched into $S\setminus\{v\}$.
	\end{claim}
	
	\begin{proof}
		For an illustration of the proof, see \cref{1231231dd}.
		Let $H$ be the graph obtained from $G$ by adding a new vertex $w$ such that
		\[
		N_{H}(w)=\{x,y\}.
		\]
		Arguing as in the proof of \cref{proposition}, it follows that $H-X$
		is a bipartite matching-covered graph. In particular, by \cref{hetyei},
		the graph $(H-X)-v-x$ has a perfect matching $M$.
		
		Observe that necessarily $M(w)=y$, since $w$ has a unique neighbor in
		$(H-X)-v-x$. We define
		\[
		M^{\P}:=M-\{xy\}.
		\]
		Then $M^{\P}$ is a matching of $G-X$ that leaves exactly the vertices
		$x,y$, and $v$ unsaturated.
		
		On the other hand, the set $S\cap V(C)$ is a maximum independent set
		of $C$. By \cref{19}, the set $T\cap V(C)$ can be matched into
		$S\cap V(C)$ by means of a matching $M_{1}$. Similarly,
		$T\cap V(C^{\P})$ can be matched into $S\cap V(C^{\P})$ by means
		of a matching $M_{2}$. In particular,
		\[
		M_{1}\subseteq E(C)
		\quad\text{and}\quad
		M_{2}\subseteq E(C^{\P}).
		\]
		
		Finally, the vertices of
		$T\setminus\bigl(V(C)\cup V(C^{\P})\bigr)$ can be matched into
		$S\setminus\{v\}$ using the matching $M^{\P}$.
		Therefore,
		\[
		M_{1}\cup M_{2}\cup M^{\P}
		\]
		is a matching that saturates all vertices of $T$ in $S\setminus\{v\}$,
		which completes the proof.
	\end{proof}

\begin{comment}
	\textbf{Claim.} El conjunto $T$ se puede matchear en $S\setminus\{v\}$. 

\emph{proof.} Para acompañar la demostración ver la FiguraXXX. Sea
$H$ el grafo obtenido de $G$ al agregar un nuevo vértice $w$ tal
que $N_{H}(w)=\{x,y\}$. Razonando como en la prueba del TeoremaXXX
$H-X$ es un bipartito matching covered. Entonces por el TeoremaXXXhetyei
$\left(H-X\right)-v-x$ tiene matching perfecto $M$. Notar que $M(w)=y$,
pues $w$ tiene un único vecino en $\left(H-X\right)-v-x$ . Si $M^{\P}:=M-\{x,y\}$
notar que $M^{\P}$ es un matching de $G-X$ que deja sin saturar
únicamente a $x,y$ and $v$. 

Observar que $S\ii V(C)$ es un independiente máximo de $S$, por
el TeoremaXXXberge $T\ii V(C)$ se puede matchear con un matching
digamos $M_{1}$ en $V(C)$, de la misma forma $T\ii V(C^{\P})$ se
puede matchear en $S\ii V(C^{\P})$ por un matching que llamaremos
$M_{2}$. Entonces claramente $M_{1}\s E(C)$ y $M_{2}\s E(C^{\P})$.
Por otro lado, los vértices de $A-\{x,y\}$ se pueden matchear en
$S$ desde el matching $M^{\P}$. Por último el matching

\[
M_{1}\u M_{2}\u M^{\P}
\]

\noindent es un matching de $T$ en $S-\{v\}$.
\end{comment}

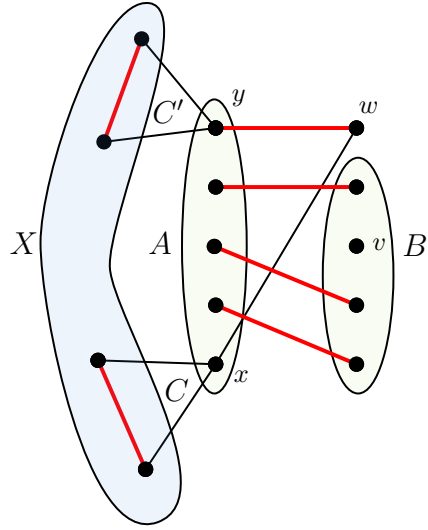
\begin{figure}[H]
	
	\begin{center}

		\tikzset{every picture/.style={line width=0.75pt}} %set default line width to 0.75pt        
		
		\begin{tikzpicture}[x=0.75pt,y=0.75pt,yscale=-1,xscale=1]
			%uncomment if require: \path (0,421); %set diagram left start at 0, and has height of 421
			
			%Shape: Ellipse [id:dp39680171537178943] 
			\draw  [fill={rgb, 255:red, 184; green, 233; blue, 134 }  ,fill opacity=0.1 ] (243.36,177.07) .. controls (243.36,136.08) and (250.66,102.85) .. (259.68,102.85) .. controls (268.69,102.85) and (276,136.08) .. (276,177.07) .. controls (276,218.06) and (268.69,251.29) .. (259.68,251.29) .. controls (250.66,251.29) and (243.36,218.06) .. (243.36,177.07) -- cycle ;
			%Straight Lines [id:da9658948306259099] 
			\draw [color={rgb, 255:red, 255; green, 0; blue, 0 }  ,draw opacity=1 ][line width=1.5]    (223,72.29) -- (204,124.29) ;
			%Straight Lines [id:da8085422222310005] 
			\draw    (260.36,236.52) -- (331.36,117.29) ;
			\draw [shift={(331.36,117.29)}, rotate = 300.77] [color={rgb, 255:red, 0; green, 0; blue, 0 }  ][fill={rgb, 255:red, 0; green, 0; blue, 0 }  ][line width=0.75]      (0, 0) circle [x radius= 3.35, y radius= 3.35]   ;
			\draw [shift={(260.36,236.52)}, rotate = 300.77] [color={rgb, 255:red, 0; green, 0; blue, 0 }  ][fill={rgb, 255:red, 0; green, 0; blue, 0 }  ][line width=0.75]      (0, 0) circle [x radius= 3.35, y radius= 3.35]   ;
			%Shape: Ellipse [id:dp4969754068257173] 
			\draw  [fill={rgb, 255:red, 184; green, 233; blue, 134 }  ,fill opacity=0.1 ] (314.36,191.79) .. controls (314.36,158.92) and (322.33,132.29) .. (332.18,132.29) .. controls (342.02,132.29) and (350,158.92) .. (350,191.79) .. controls (350,224.65) and (342.02,251.29) .. (332.18,251.29) .. controls (322.33,251.29) and (314.36,224.65) .. (314.36,191.79) -- cycle ;
			%Straight Lines [id:da39708895429607993] 
			\draw    (260.36,236.52) -- (225,289.57) ;
			\draw [shift={(225,289.57)}, rotate = 123.68] [color={rgb, 255:red, 0; green, 0; blue, 0 }  ][fill={rgb, 255:red, 0; green, 0; blue, 0 }  ][line width=0.75]      (0, 0) circle [x radius= 3.35, y radius= 3.35]   ;
			\draw [shift={(260.36,236.52)}, rotate = 123.68] [color={rgb, 255:red, 0; green, 0; blue, 0 }  ][fill={rgb, 255:red, 0; green, 0; blue, 0 }  ][line width=0.75]      (0, 0) circle [x radius= 3.35, y radius= 3.35]   ;
			%Straight Lines [id:da04898079123590626] 
			\draw    (260.36,236.52) -- (201,234.57) ;
			\draw [shift={(201,234.57)}, rotate = 181.88] [color={rgb, 255:red, 0; green, 0; blue, 0 }  ][fill={rgb, 255:red, 0; green, 0; blue, 0 }  ][line width=0.75]      (0, 0) circle [x radius= 3.35, y radius= 3.35]   ;
			\draw [shift={(260.36,236.52)}, rotate = 181.88] [color={rgb, 255:red, 0; green, 0; blue, 0 }  ][fill={rgb, 255:red, 0; green, 0; blue, 0 }  ][line width=0.75]      (0, 0) circle [x radius= 3.35, y radius= 3.35]   ;
			%Straight Lines [id:da13193524738241713] 
			\draw    (260.36,117.29) -- (223,72.29) ;
			\draw [shift={(223,72.29)}, rotate = 230.3] [color={rgb, 255:red, 0; green, 0; blue, 0 }  ][fill={rgb, 255:red, 0; green, 0; blue, 0 }  ][line width=0.75]      (0, 0) circle [x radius= 3.35, y radius= 3.35]   ;
			\draw [shift={(260.36,117.29)}, rotate = 230.3] [color={rgb, 255:red, 0; green, 0; blue, 0 }  ][fill={rgb, 255:red, 0; green, 0; blue, 0 }  ][line width=0.75]      (0, 0) circle [x radius= 3.35, y radius= 3.35]   ;
			%Straight Lines [id:da6191388104973681] 
			\draw    (204,124.29) -- (260.36,117.29) ;
			\draw [shift={(260.36,117.29)}, rotate = 352.92] [color={rgb, 255:red, 0; green, 0; blue, 0 }  ][fill={rgb, 255:red, 0; green, 0; blue, 0 }  ][line width=0.75]      (0, 0) circle [x radius= 3.35, y radius= 3.35]   ;
			\draw [shift={(204,124.29)}, rotate = 352.92] [color={rgb, 255:red, 0; green, 0; blue, 0 }  ][fill={rgb, 255:red, 0; green, 0; blue, 0 }  ][line width=0.75]      (0, 0) circle [x radius= 3.35, y radius= 3.35]   ;
			%Straight Lines [id:da4059810469200744] 
			\draw [color={rgb, 255:red, 255; green, 0; blue, 0 }  ,draw opacity=1 ][line width=1.5]    (201,234.57) -- (225,289.57) ;
			%Straight Lines [id:da04882836287068559] 
			\draw [color={rgb, 255:red, 255; green, 0; blue, 0 }  ,draw opacity=1 ][line width=1.5]    (260.36,147.09) -- (331.36,147.09) ;
			%Straight Lines [id:da7973581653535847] 
			\draw [color={rgb, 255:red, 255; green, 0; blue, 0 }  ,draw opacity=1 ][line width=1.5]    (259.68,177.07) -- (331.36,206.71) ;
			%Straight Lines [id:da9975380399871326] 
			\draw [color={rgb, 255:red, 255; green, 0; blue, 0 }  ,draw opacity=1 ][line width=1.5]    (260.36,206.71) -- (331.36,236.52) ;
			%Straight Lines [id:da5895895028106892] 
			\draw [color={rgb, 255:red, 255; green, 0; blue, 0 }  ,draw opacity=1 ][line width=1.5]    (260.36,117.29) -- (331.36,117.29) ;
			%Shape: Polygon Curved [id:ds8848626437164268] 
			\draw  [fill={rgb, 255:red, 74; green, 144; blue, 226 }  ,fill opacity=0.1 ] (227,54.29) .. controls (252,58.14) and (205,163.14) .. (207,188.14) .. controls (209,213.14) and (267,303.29) .. (231,316.29) .. controls (195,329.29) and (172,208.29) .. (172,175.29) .. controls (172,142.29) and (202,50.43) .. (227,54.29) -- cycle ;
			%Straight Lines [id:da133861302897053] 
			\draw    (331.36,176.9) ;
			\draw [shift={(331.36,176.9)}, rotate = 0] [color={rgb, 255:red, 0; green, 0; blue, 0 }  ][fill={rgb, 255:red, 0; green, 0; blue, 0 }  ][line width=0.75]      (0, 0) circle [x radius= 3.35, y radius= 3.35]   ;
			\draw [shift={(331.36,176.9)}, rotate = 0] [color={rgb, 255:red, 0; green, 0; blue, 0 }  ][fill={rgb, 255:red, 0; green, 0; blue, 0 }  ][line width=0.75]      (0, 0) circle [x radius= 3.35, y radius= 3.35]   ;
			%Straight Lines [id:da8179593943783703] 
			\draw    (331.36,147.09) ;
			\draw [shift={(331.36,147.09)}, rotate = 0] [color={rgb, 255:red, 0; green, 0; blue, 0 }  ][fill={rgb, 255:red, 0; green, 0; blue, 0 }  ][line width=0.75]      (0, 0) circle [x radius= 3.35, y radius= 3.35]   ;
			\draw [shift={(331.36,147.09)}, rotate = 0] [color={rgb, 255:red, 0; green, 0; blue, 0 }  ][fill={rgb, 255:red, 0; green, 0; blue, 0 }  ][line width=0.75]      (0, 0) circle [x radius= 3.35, y radius= 3.35]   ;
			%Straight Lines [id:da10643365247213787] 
			\draw    (331.36,117.29) ;
			\draw [shift={(331.36,117.29)}, rotate = 0] [color={rgb, 255:red, 0; green, 0; blue, 0 }  ][fill={rgb, 255:red, 0; green, 0; blue, 0 }  ][line width=0.75]      (0, 0) circle [x radius= 3.35, y radius= 3.35]   ;
			\draw [shift={(331.36,117.29)}, rotate = 0] [color={rgb, 255:red, 0; green, 0; blue, 0 }  ][fill={rgb, 255:red, 0; green, 0; blue, 0 }  ][line width=0.75]      (0, 0) circle [x radius= 3.35, y radius= 3.35]   ;
			%Straight Lines [id:da34483818806190736] 
			\draw    (331.36,236.52) ;
			\draw [shift={(331.36,236.52)}, rotate = 0] [color={rgb, 255:red, 0; green, 0; blue, 0 }  ][fill={rgb, 255:red, 0; green, 0; blue, 0 }  ][line width=0.75]      (0, 0) circle [x radius= 3.35, y radius= 3.35]   ;
			\draw [shift={(331.36,236.52)}, rotate = 0] [color={rgb, 255:red, 0; green, 0; blue, 0 }  ][fill={rgb, 255:red, 0; green, 0; blue, 0 }  ][line width=0.75]      (0, 0) circle [x radius= 3.35, y radius= 3.35]   ;
			%Straight Lines [id:da4102283484021404] 
			\draw    (331.36,206.71) ;
			\draw [shift={(331.36,206.71)}, rotate = 0] [color={rgb, 255:red, 0; green, 0; blue, 0 }  ][fill={rgb, 255:red, 0; green, 0; blue, 0 }  ][line width=0.75]      (0, 0) circle [x radius= 3.35, y radius= 3.35]   ;
			\draw [shift={(331.36,206.71)}, rotate = 0] [color={rgb, 255:red, 0; green, 0; blue, 0 }  ][fill={rgb, 255:red, 0; green, 0; blue, 0 }  ][line width=0.75]      (0, 0) circle [x radius= 3.35, y radius= 3.35]   ;
			%Straight Lines [id:da7890505115477866] 
			\draw    (260.36,206.71) ;
			\draw [shift={(260.36,206.71)}, rotate = 0] [color={rgb, 255:red, 0; green, 0; blue, 0 }  ][fill={rgb, 255:red, 0; green, 0; blue, 0 }  ][line width=0.75]      (0, 0) circle [x radius= 3.35, y radius= 3.35]   ;
			\draw [shift={(260.36,206.71)}, rotate = 0] [color={rgb, 255:red, 0; green, 0; blue, 0 }  ][fill={rgb, 255:red, 0; green, 0; blue, 0 }  ][line width=0.75]      (0, 0) circle [x radius= 3.35, y radius= 3.35]   ;
			%Straight Lines [id:da305554160528569] 
			\draw    (259.68,177.07) ;
			\draw [shift={(259.68,177.07)}, rotate = 0] [color={rgb, 255:red, 0; green, 0; blue, 0 }  ][fill={rgb, 255:red, 0; green, 0; blue, 0 }  ][line width=0.75]      (0, 0) circle [x radius= 3.35, y radius= 3.35]   ;
			\draw [shift={(259.68,177.07)}, rotate = 0] [color={rgb, 255:red, 0; green, 0; blue, 0 }  ][fill={rgb, 255:red, 0; green, 0; blue, 0 }  ][line width=0.75]      (0, 0) circle [x radius= 3.35, y radius= 3.35]   ;
			%Straight Lines [id:da6561700492050402] 
			\draw    (260.36,147.09) ;
			\draw [shift={(260.36,147.09)}, rotate = 0] [color={rgb, 255:red, 0; green, 0; blue, 0 }  ][fill={rgb, 255:red, 0; green, 0; blue, 0 }  ][line width=0.75]      (0, 0) circle [x radius= 3.35, y radius= 3.35]   ;
			\draw [shift={(260.36,147.09)}, rotate = 0] [color={rgb, 255:red, 0; green, 0; blue, 0 }  ][fill={rgb, 255:red, 0; green, 0; blue, 0 }  ][line width=0.75]      (0, 0) circle [x radius= 3.35, y radius= 3.35]   ;
			%Straight Lines [id:da02120580748697387] 
			\draw    (260.36,117.29) ;
			\draw [shift={(260.36,117.29)}, rotate = 0] [color={rgb, 255:red, 0; green, 0; blue, 0 }  ][fill={rgb, 255:red, 0; green, 0; blue, 0 }  ][line width=0.75]      (0, 0) circle [x radius= 3.35, y radius= 3.35]   ;
			\draw [shift={(260.36,117.29)}, rotate = 0] [color={rgb, 255:red, 0; green, 0; blue, 0 }  ][fill={rgb, 255:red, 0; green, 0; blue, 0 }  ][line width=0.75]      (0, 0) circle [x radius= 3.35, y radius= 3.35]   ;
			%Straight Lines [id:da4906763160337255] 
			\draw    (225,289.57) ;
			\draw [shift={(225,289.57)}, rotate = 0] [color={rgb, 255:red, 0; green, 0; blue, 0 }  ][fill={rgb, 255:red, 0; green, 0; blue, 0 }  ][line width=0.75]      (0, 0) circle [x radius= 3.35, y radius= 3.35]   ;
			\draw [shift={(225,289.57)}, rotate = 0] [color={rgb, 255:red, 0; green, 0; blue, 0 }  ][fill={rgb, 255:red, 0; green, 0; blue, 0 }  ][line width=0.75]      (0, 0) circle [x radius= 3.35, y radius= 3.35]   ;
			%Straight Lines [id:da09317791608718029] 
			\draw    (201,234.57) ;
			\draw [shift={(201,234.57)}, rotate = 0] [color={rgb, 255:red, 0; green, 0; blue, 0 }  ][fill={rgb, 255:red, 0; green, 0; blue, 0 }  ][line width=0.75]      (0, 0) circle [x radius= 3.35, y radius= 3.35]   ;
			\draw [shift={(201,234.57)}, rotate = 0] [color={rgb, 255:red, 0; green, 0; blue, 0 }  ][fill={rgb, 255:red, 0; green, 0; blue, 0 }  ][line width=0.75]      (0, 0) circle [x radius= 3.35, y radius= 3.35]   ;
			
			% Text Node
			\draw (338,171.4) node [anchor=north west][inner sep=0.75pt]  [font=\footnotesize]  {$v$};
			% Text Node
			\draw (268,238.4) node [anchor=north west][inner sep=0.75pt]  [font=\footnotesize]  {$x$};
			% Text Node
			\draw (267,96.4) node [anchor=north west][inner sep=0.75pt]  [font=\footnotesize]  {$y$};
			% Text Node
			\draw (331,102.4) node [anchor=north west][inner sep=0.75pt]  [font=\footnotesize]  {$w$};
			% Text Node
			\draw (227,101.4) node [anchor=north west][inner sep=0.75pt]    {$C^{\prime }$};
			% Text Node
			\draw (233,242.4) node [anchor=north west][inner sep=0.75pt]    {$C$};
			% Text Node
			\draw (155,168.4) node [anchor=north west][inner sep=0.75pt]    {$X$};
			% Text Node
			\draw (225,168.4) node [anchor=north west][inner sep=0.75pt]    {$A$};
			% Text Node
			\draw (353,169.4) node [anchor=north west][inner sep=0.75pt]    {$B$};

		\end{tikzpicture}

	\end{center}

\caption{An illustration of the proof of \cref{12oij3}}

	\label{1231231dd}
	
\end{figure}

Note that $S^{\p}$ on each cycle $C$ and $C^{\P}$ can be chosen in two
distinct disjoint ways, which proves that $\core G=B$.
On the other hand, note that

\begin{eqnarray*}
	2\alpha(G) & = & 2\a S\\
	& = & 2\a{S^{\P}}+2\a B\\
	& = & \left(\a C+\a{C^{\P}}-2\right)+\left(\a A+\a B-1\right)\\
	& = & \left(\a C+\a{C^{\P}}+\a A+\a B-2\right)-1\\
	& = & n-1,
\end{eqnarray*}

\noindent that is, $\alpha(G)=\frac{n-1}{2}$.

\medskip

Finally, note that

\begin{eqnarray*}
	2\a{\core G}+1 & = & 2\a B+1\\
	& = & \a B+\a A\\
	& = & n-\a{V(C)\u V(C^{\P})}+2.
\end{eqnarray*}

\noindent which implies that
$\a{\core G}=\frac{n-\a{V(C)\u V(C^{\P})}+1}{2}$.

\end{proof}

\begin{theorem}\label{12oij312oij3}
	With the notation and hypotheses of \cref{12oij3},
	we have
	\[
	\corona G=\left(V(C)\u V(C^{\P})\u B\right)-\{x,y\}=V(G)-A
	\]
	and, in particular,
	\[
	\a{\corona G}=\frac{n+\a{V(C)\u V(C^{\P})}-3}{2}.
	\]
\end{theorem}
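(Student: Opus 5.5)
We prove the two inclusions $V(G)-A\subseteq\corona G$ and $\corona G\cap A=\emptyset$, and then count. Throughout we use the notation of \cref{12oij3}: $X$ is the set of degree-$2$ cycle vertices, $G-X$ has bipartition $(A,B)$ with $|A|=|B|+1$, and $x,y\in A$ are the ends of $P$, so that $V(C)\cap A=\{x\}$ and $V(C^{\P})\cap A=\{y\}$. Hence $V(G)-A=X\u B=\left(V(C)\u V(C^{\P})\u B\right)-\{x,y\}$, which gives the set equality in the statement.

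\emph{Every vertex of $V(G)-A$ lies in some maximum independent set.} In the proof of \cref{12oij3}, $S=S^{\P}\u B$ is maximum for \emph{any} maximum independent set $S^{\P}$ of $C\u C^{\P}$ avoiding $x$ and $y$. On the odd cycle $C$ of length $2m+1$, the maximum independent sets of size $m$ avoiding $x$ are exactly the two alternating sets of the path $C-x$, which has $2m$ vertices. Their union is $V(C)-\{x\}$, and the same holds for $C^{\P}$. Varying $S^{\P}$ therefore shows $X\u B\subseteq\corona G$.

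\emph{No vertex of $A$ lies in a maximum independent set.} Since $B\subseteq\core G$ by \cref{12oij3}, every maximum independent set contains $B$. A vertex $a\in A-\{x,y\}$ has all its neighbours in $B$, because it lies on $P$ or on an ear and is not on a cycle. It has at least one neighbour, so it cannot be added to a set containing $B$. For $x$, observe that $x$ is an internal vertex of $P$ or an endpoint of $P$. If $P$ has positive length, then $x$ has a neighbour on $P$, and that neighbour is in $B$. If some ear attaches at $x$, its neighbour on the ear is also in $B$. The only problematic situation would be $P$ of length $0$, which cannot occur because a pendant path has positive length. So $x\notin\corona G$, and symmetrically $y\notin\corona G$. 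The step I expect to need the most care is exactly this one: confirming that $x$ and $y$ each have a neighbour in $B$, which uses that $P$ has positive length.

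\emph{Counting.} Put $c=\a{V(C)\u V(C^{\P})}$. Then $|X|=c-2$ and $|B|=(n-c+1)/2$, using $|A|+|B|=n-c+2$ and $|A|=|B|+1$. Therefore
$$\a{\corona G}=c-2+\frac{n-c+1}{2}=\frac{n+c-3}{2}.$$
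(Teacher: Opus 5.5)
Your argument is correct and is essentially the paper's proof. Since $B\subseteq\core G$ and every vertex of $A$ has a neighbour in $B$ (the paper gets this at once from connectivity of the bipartite graph $G-X$), $A$ misses the corona; the two alternating choices of $S^{\P}$ on each cycle put $X$ into it; and the count is the same. One harmless slip: the path $C-x$ on $2m$ vertices has $m+1$ maximum independent sets, not exactly two, but you only need that the two alternating ones are admissible choices of $S^{\P}$.
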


\begin{proof}
	Note that $N(\core G)=N(B)=A$, since $G-X$ is a connected bipartite
	graph. That is, $\corona G\s V(G)-A$. On the other hand, as in the
	proof of \cref{12oij3}, the set $S^{\p}$ on each cycle $C$ and
	$C^{\P}$ can be chosen in two distinct disjoint ways, but neither
	$x$ nor $y$ belongs to $\corona G$. Therefore,
	$\left(V(C)\u V(C^{\P})\right)-\{x,y\}\s\corona G$.
	Hence,
	\[
	\corona G=\left(V(C)\u V(C^{\P})\u B\right)-\{x,y\}=V(G)-A.
	\]
	
	\noindent On the other hand, note that
	\begin{eqnarray*}
		2\a{\corona G} & = & 2\a{\left(V(C)\u V(C^{\P})\u B\right)-\{x,y\}}\\
		& = & 2\a{V(C)\u V(C^{\P})}+2\a B-4\\
		& = & 2\a{V(C)\u V(C^{\P})}+\a A+\a B-5\\
		& = & \left(\a A+\a B+\a{V(C)\u V(C^{\P})}-2\right)
		+\a{V(C)\u V(C^{\P})}-3\\
		& = & n+\a{V(C)\u V(C^{\P})}-3.
	\end{eqnarray*}
	
	\noindent As desired.
\end{proof}

\begin{theorem}\label{123kj123j123kj123j123kj123j123kj123j}
	Let $G$ be a bicyclic even-linked graph and let
	$A$ be the set defined in \cref{12oij3}. Then the following
	properties hold:
	\begin{itemize}
		\item $\a{\corona G}+\a{\core G}=2\alpha(G)$,
		\item $N(\core G)=A,$
		\item $\corona G$ and $N(\core G)$ form a partition of $V(G)$. 
	\end{itemize}
\end{theorem}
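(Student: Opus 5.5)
The statement follows almost directly from \cref{12oij3} and \cref{12oij312oij3}. We combine their formulas and identify $N(\core G)$ using the bipartite structure of $G-X$.

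\textbf{First item.} Put $c:=\a{V(C)\u V(C^{\P})}$. By \cref{12oij3} we have $\a{\core G}=\frac{n-c+1}{2}$, and by \cref{12oij312oij3} we have $\a{\corona G}=\frac{n+c-3}{2}$. Adding these gives
\[
\a{\core G}+\a{\corona G}=\frac{2n-2}{2}=n-1.
\]
Since $\alpha(G)=\frac{n-1}{2}$ by \cref{12oij3}, the sum equals $2\alpha(G)$.

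\textbf{Second item.} Here $\core G=B$ by \cref{12oij3}, so we must show $N(B)=A$.

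For the inclusion $N(B)\s A$, we first need that no vertex of $B$ is adjacent to a vertex of $X$. A vertex of $X$ has degree $2$ and lies on $C$ or $C^{\P}$, so both of its neighbours lie on that cycle. The only vertices of $C\u C^{\P}$ that survive in $G-X$ are the attachment points, and since the remaining cycle vertices have degree $2$, these are exactly $x$ and $y$ (the endpoints of $P$, and no ears end inside the cycles). Since $x,y\in A$, it follows that $N(B)\cap X=\emptyset$. Inside $G-X$, which is bipartite with parts $A,B$, every neighbour of a vertex of $B$ lies in $A$.

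For the reverse inclusion $A\s N(B)$, we use that $G-X$ is connected. It has at least the two vertices $x,y$ (or, if $P$ is trivial, one vertex, in which case $B=\emptyset$ and the situation needs a separate remark). In a connected bipartite graph with at least one edge, every vertex of $A$ has a neighbour, and that neighbour lies in $B$. Hence $A\s N(B)$.

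The one delicate point is the degenerate case where $P$ has length $0$. Then $G-X=\{x\}$ and $\core G=\emptyset$, so $N(\core G)=\emptyset\neq A$. I expect this to be excluded by the definition of a pendant, which requires the connecting path to have positive length. Since $P$ has even length, this forces length at least $2$, so $B\neq\emptyset$ and the graph $G-X$ has edges.

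\textbf{Third item.} \Cref{12oij312oij3} gives $\corona G=V(G)-A$, and we have just shown $N(\core G)=A$. Therefore $\corona G$ and $N(\core G)$ are disjoint and their union is $V(G)$. Both parts are nonempty: $A$ contains $x$, and $\corona G$ contains the cycle vertices other than $x,y$. So they form a partition of $V(G)$.

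I expect the main obstacle to be the justification that the vertices of $X$ are adjacent only to cycle vertices and to $x,y$, i.e.\ that $N(B)\cap X=\emptyset$. This rests on the structural remark that all ears have both endpoints on $P$.
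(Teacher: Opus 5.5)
Your proposal is correct and follows the paper's proof. You add the formulas for $\a{\core G}$ and $\a{\corona G}$ from \cref{12oij3} and \cref{12oij312oij3}, use $\core G=B$ with $N(B)=A$ from the connected bipartite structure of $G-X$, and read off the partition from $\corona G=V(G)-A$. Your explicit check that no vertex of $X$ is adjacent to $B$, and that $P$ has positive even length so $B\neq\emptyset$, fills in details the paper only asserts.
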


\begin{proof}
	By \cref{12oij3} and \cref{12oij312oij3}, we have
	\begin{eqnarray*}
		\a{\corona G}+\a{\core G}
		& = &
		\frac{n+\a{V(C)\u V(C^{\P})}-3}{2}
		+\frac{n-\a{V(C)\u V(C^{\P})}+1}{2}\\
		& = & \frac{2(n-1)}{2}\\
		& = & 2\alpha(G).
	\end{eqnarray*}
	
	\noindent In the proof of \cref{12oij312oij3} it is observed that
	$N(\core G)=N(B)=A$, and by \cref{12oij312oij3} we have
	$\corona G=V(G)-A$.
\end{proof}

\begin{theorem}\label{12oiu3h129u3}
	Let $G$ be an odd-linked graph. Then
	\[
	A(G)=\core G,\quad D(G)=V(G)-\core G
	\quad\text{and}\quad
	\mu(G)=\frac{n-1}{2}.
	\]
\end{theorem}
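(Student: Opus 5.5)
The plan is to mirror the proof of \cref{12oij3}, with the bipartite matching-covered graph $H$ of \cref{proposition} supplying the needed matchings. Let $C,C^{\P}$ be the two odd cycles and $P$ the linking path with end-points $x\in V(C)$ and $y\in V(C^{\P})$. Let $X$ be the set of degree-$2$ vertices of $V(C)\cup V(C^{\P})$, so that $G^{\P}=G-X$ is connected and bipartite with bipartition $(A^{\P},B^{\P})$. Here I would first pin down the parity bookkeeping: how many fictitious vertices \cref{proposition} attaches in this case, and on which sides $x,y$ then lie. Pairing each chosen fictitious vertex with its neighbour among $x,y$, I expect one colour class, say $B^{\P}$, to have $\a{A^{\P}}=\a{B^{\P}}+1$ with $x,y\in A^{\P}$, exactly as in \cref{12oij3}. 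The target statements are then $\core G=B^{\P}$, $A(G)=B^{\P}$, $D(G)=X\cup A^{\P}=V(G)-B^{\P}$ and $\mu(G)=\frac{n-1}{2}$.

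\emph{Core.} I would repeat the argument of \cref{12oij3} essentially verbatim. Put $S=S^{\P}\cup B^{\P}$, where $S^{\P}$ is a maximum independent set of $C\cup C^{\P}$ avoiding $x,y$. For $v\in B^{\P}$, \cref{hetyei} applied to $H-X$ gives a perfect matching of $(H-X)-v-x$. Removing the edges at the fictitious vertices yields a matching of $G^{\P}$ that misses only $x,y,v$. Combining it with matchings inside $C$ and $C^{\P}$ given by \cref{19}, \cref{19k} shows that $S$ is maximum and that $B^{\P}\subseteq\core G$. The two disjoint choices of $S^{\P}$ on each cycle then give $\core G=B^{\P}$.

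\emph{Gallai--Edmonds sets and $\mu$.} First I would build a near-perfect matching of $G$ that misses an arbitrary prescribed vertex $u\notin B^{\P}$.
\begin{itemize}
\item If $u\in A^{\P}$, take the perfect matching of $(H-X)-u-w_0$ from \cref{hetyei}, for a suitable $w_0\in B^{\P}$ or fictitious vertex, translated to $G^{\P}$. Then cover $C-x$ and $C^{\P}-y$ by their perfect matchings, or shift $x$ or $y$ into its cycle when it is left uncovered.
\item If $u\in X$, use a near-perfect matching of the corresponding cycle missing $u$, which covers $x$ (respectively $y$), together with a matching of the rest of $G^{\P}$ obtained in the same way.
\end{itemize}
Counting gives $\mu(G)=\frac{n-1}{2}$ and $V(G)-B^{\P}\subseteq D(G)$. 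Conversely, no vertex of $B^{\P}=\core G$ is missed by a maximum matching. Indeed, $B^{\P}$ together with the vertices of $S^{\P}$ forms an independent set that must be matched into $A^{\P}\cup X$ by any near-perfect matching. Equivalently, $N(B^{\P})=A^{\P}$ and $\a{N(B^{\P})}=\a{B^{\P}}+1$, so a standard deficiency count on $B^{\P}$ shows that every maximum matching saturates $B^{\P}$. Hence $D(G)=V(G)-B^{\P}$. Every vertex of $B^{\P}$ has a neighbour in $A^{\P}\subseteq D(G)$, so $A(G)=B^{\P}$ and $C(G)=\emptyset$, which matches $\core G$.

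The main obstacle is the parity/side bookkeeping in the first step. For the statement $\mu(G)=\frac{n-1}{2}$ to make sense, $n$ must be odd, so I must check carefully how ``length $k$'' of $P$ is counted and which fictitious vertices \cref{proposition} adds. If the naive count instead gives $\a{A^{\P}}=\a{B^{\P}}$ with $x,y$ on opposite sides, the roles of $A^{\P}$ and $B^{\P}$ have to be adjusted. I would settle this first on the smallest example: two triangles joined by a path. After that, the matching constructions via \cref{hetyei} are routine.
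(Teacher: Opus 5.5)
\textbf{Which statement is true.} You need to settle the parity question, because the answer decides whether the statement can hold at all. If the linking path $P$ has odd length, then $|A'|=|B'|$ with $x,y$ on opposite sides, and $G$ has a perfect matching. This is \cref{123oiu21j3jj}; two triangles joined by an edge already show it. So $\mu(G)=n/2$ and $D(G)=\emptyset$, and no relabelling of $A'$ and $B'$ can give $\mu(G)=\frac{n-1}{2}$ or $D(G)=V(G)-\core G$. Your fallback therefore cannot work.

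The statement is true only for even-linked graphs, and that is what the paper's proof actually treats. It works ``with the notation of the proof of \cref{12oij3}'', that is, one fictitious vertex $w$ with $N_H(w)=\{x,y\}$. The corollary immediately after it is also stated for even-linked graphs. So ``odd-linked'' is a slip, and your setup $|A'|=|B'|+1$ with $x,y\in A'$ is the right one.

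Under that setup, your core computation is just \cref{12oij3}, so you can cite it. Your construction of a matching missing a prescribed $u\notin B'$ is exactly the paper's argument. You take a perfect matching of $(H-X)-u-w$, add perfect matchings of $C-x$ and $C'-y$, and re-route inside a cycle when $u\in X$. This gives $\mu(G)=\frac{n-1}{2}$ and $V(G)-B'\subseteq D(G)$.

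\textbf{The gap: showing $B'\cap D(G)=\emptyset$.} Neither of your justifications works.
\begin{itemize}
\item The count with the independent set $B'\cup S'$ cannot distinguish $B'$ from $S'$. It would equally ``prove'' that vertices of $S'\subseteq X$ are never missed, which contradicts your own construction.
\item The implication ``$|N(B')|=|B'|+1$ forces every maximum matching to saturate $B'$'' is false in general. In a triangle, a single vertex $b$ has $|N(b)|=2$, yet some maximum matching misses $b$.
\end{itemize}
What is missing is the parity of the two odd cycles. Here are three ways to supply it.
\begin{itemize}
\item Direct argument. Suppose a maximum matching $M$ misses $b\in B'$. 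Since $|M|=\frac{n-1}{2}$, $M$ misses only $b$, and it matches $B'-b$ into $N(B')=A'$. So two vertices of $A'$ must be matched into $X$, and these can only be $x$ and $y$. Then the $|V(C)|-2$ vertices of $V(C)\setminus\{x,M(x)\}$, an odd number, have all their neighbours in $V(C)$. They would have to be perfectly matched among themselves, which is impossible.
\item Count odd components of $G-B'$ instead of neighbours. These components are the $|A'|-2$ isolated vertices of $A'-\{x,y\}$, together with $V(C)$ and $V(C')$. Hence $o(G-B')-|B'|=1=n-2\mu(G)$, so $B'$ is a Tutte--Berge barrier and every maximum matching covers it.
\item The paper argues via \cref{ge}. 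A vertex $b\in B'\cap D(G)$, together with a neighbour in $A'\subseteq D(G)$, would lie in a nontrivial factor-critical component of $G[D(G)]$, hence on an odd cycle. But $b\notin V(C)\cup V(C')$.
\end{itemize}
Any of these closes the gap. The rest of your argument, $A(G)=B'$ and $C(G)=\emptyset$, then goes through.
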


\begin{proof}
	With the notation of the proof of \cref{12oij3}, let $z\in A$.
	Then, since $H-X$ is a matching-covered graph, by \cref{hetyei}
	the graph $H-X-w-z$ has a perfect matching $M_{z}$.
	Let $M_{X}$ be a perfect matching of $G[X]$.
	Then $M_{z}\cup M_{X}$ is a maximum matching of $G$ that leaves
	only $z$ unsaturated. In particular, $A\subseteq D(G)$.
	Moreover, it follows that
	\[
	\mu(G)=\frac{n-1}{2}.
	\]
	
	\noindent Analogously, the matching $M_{x}\cup M_{X}$ leaves only
	$x$ unsaturated, and it is easy to modify the matching in $C$ in
	such a way that $V(C)\subseteq D(G)$; see \cref{asd1po2k3k123}.

\begin{figure}[H]
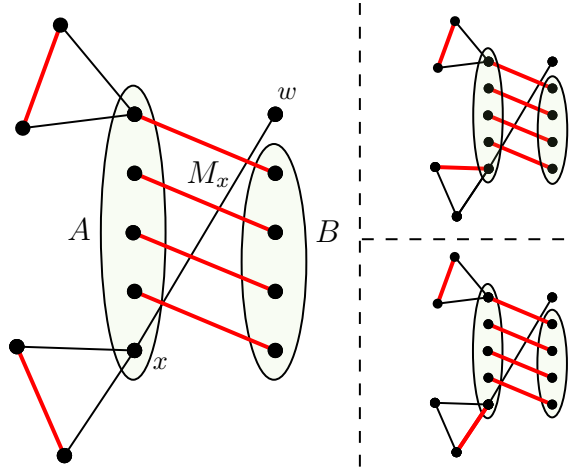

	
	\begin{center}

		\tikzset{every picture/.style={line width=0.75pt}} %set default line width to 0.75pt        
		
		% [inline block 2: 1 envs, 34046 chars -> data_tex | \begin{tikzpicture}[x=0.75pt,y=0.75pt,yscale=-1,xscale=1] 			%uncomment if require: \path (0,421); %set diagram left sta...]


	\end{center}

\caption{Illustration of the proof of \cref{12oiu3h129u3}}

	\label{asd1po2k3k123}
	
\end{figure}

\noindent Analogously, by considering the matching $M_{y}\cup M_{X}$,
it follows that $V(C^{\P})\subseteq D(G)$.
On the other hand, if a vertex $b\in B$ belongs to $D(G)$, then,
since $A\subseteq D(G)$, such a vertex $b$ is contained in a
non-trivial component of $G[D(G)]$. But by \cref{ge}, this component
is a factor-critical graph, which cannot be bipartite, leading to the
conclusion that $b$ lies on an odd cycle, a contradiction. Therefore,
\[
D(G)=V(G)-B=V(G)-\core G
\quad\text{and}\quad
A(G)=\core G.
\]
\noindent As desired.
\end{proof}

\begin{corollary}\label{123kj123j123kj123j123kj123j}
Let $G$ be an even-linked graph of order
$n$. Then $\alpha(G)+\mu(G)=n-1$.
\end{corollary}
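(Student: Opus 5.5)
The plan is to combine the value $\alpha(G)=\frac{n-1}{2}$ from \cref{12oij3} with the value $\mu(G)=\frac{n-1}{2}$ obtained in the proof of \cref{12oiu3h129u3}. That theorem is labeled for odd-linked graphs, but its proof is written in the notation of \cref{12oij3}, that is, for the even-linked setting. Its matching construction applies verbatim to even-linked graphs. Summing the two values gives $\alpha(G)+\mu(G)=n-1$.

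The key step is to justify $\mu(G)=\frac{n-1}{2}$ directly for an even-linked $G$. For the lower bound I will build a near-perfect matching.

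\begin{itemize}
\item Let $w$ be the auxiliary vertex with $N_H(w)=\{x,y\}$. By the argument of \cref{proposition}, $H-X$ is a bipartite matching-covered graph with bipartition $(A,B\cup\{w\})$.
\item For $z\in A$, \cref{hetyei} gives a perfect matching $M_z$ of $H-X-w-z$. This is a matching of $G-X$ that misses only $z$.
\item The set $X$ induces two paths, $C-x$ and $C^{\P}-y$. Each has an even number of vertices, so $G[X]$ has a perfect matching $M_X$.
\item Then $M_z\cup M_X$ misses exactly one vertex of $G$, so $\mu(G)\ge\frac{n-1}{2}$.
\end{itemize}

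For the upper bound, $n$ is odd: $|A|+|B|$ is odd and $|X|$ is even. Hence no perfect matching exists and $\mu(G)\le\frac{n-1}{2}$.

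The remaining care is in confirming the parities used above. First, $|A|=|B|+1$, which is stated in \cref{12oij3}. Second, $|V(C)|-1$ and $|V(C^{\P})|-1$ are even because the cycles are odd. Third, $H-X$ is genuinely matching-covered, with $w$ on the $B$-side since $x,y\in A$. This needs $|A|=|B\cup\{w\}|$ so that $H-X-w-z$ is balanced.

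The main obstacle is only bookkeeping: making sure the proposition's construction for the even case, with a single added vertex $w$ adjacent to both endpoints of the even path $P$, is an odd-ear decomposition starting from $K_2$. This holds because $P$ of even length plus $w$ forms an even cycle, which arises from $K_2$ by one odd ear. The later ears have endpoints on $P$, so they remain odd ears in $H-X$. With $\alpha$ and $\mu$ both equal to $\frac{n-1}{2}$, the corollary follows.
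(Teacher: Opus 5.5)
Your proposal is correct and takes the paper's route: the corollary comes from adding $\alpha(G)=\frac{n-1}{2}$ from \cref{12oij3} to $\mu(G)=\frac{n-1}{2}$ from \cref{12oiu3h129u3}. You rightly noticed that the latter theorem says ``odd-linked'', but its proof (the near-perfect matching $M_z\cup M_X$ built from $H-X$) is written for the even-linked setting; your explicit parity argument for $\mu(G)\le\frac{n-1}{2}$ only spells out a step the paper leaves implicit.
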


\section{Bicyclic odd-linked graphs}\label{sss4}

In this section, we study $2$-bicritical odd-linked graphs.
In contrast with the even-linked case, the behavior of $\core G$ and
$\corona G$ is particularly simple: we show that $\core G=\emptyset$ and
$\corona G=V(G)$, and we determine the corresponding matching structure.

\begin{theorem}\label{123oiu21j3jj}
	Let $G$ be a bicyclic odd-linked graph of
	order $n$. Then the following items hold.
	\begin{itemize}
		\item $G$ has a perfect matching,
		\item $\alpha(G)=\frac{n-2}{2}$,
		\item $\core G=\emptyset$, 
		\item $\corona G=V(G).$
	\end{itemize}
\end{theorem}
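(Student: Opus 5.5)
The plan is to reuse the decomposition from the proof of \cref{12oij3}, now with $P$ of odd length. Let $x$ and $y$ be the end-points of $P$, with $x\in V(C)$ and $y\in V(C^{\P})$. Let $X$ be the set of vertices of $V(C)\u V(C^{\P})$ other than $x,y$, and set $G^{\P}=G-X$ with bipartition $(A,B)$.

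\textbf{Structure of $G^{\P}$.} Since $P$ has odd length, its end-points lie in different classes, say $x\in A$ and $y\in B$. The path $P$ has an even number of vertices and a perfect matching. Each subsequent odd ear adds an even number of internal vertices, alternately in $A$ and $B$, which can be matched along the ear. Hence $G^{\P}$ has a perfect matching and $|A|=|B|$.

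\textbf{Perfect matching of $G$.} Each of $C-x$ and $C^{\P}-y$ is a path of even order, so $G[X]$ has a perfect matching. Together with the perfect matching of $G^{\P}$ this gives a perfect matching of $G$. In particular $n$ is even and
\[
n=2|B|+|C|+|C^{\P}|-2 .
\]

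\textbf{Upper bound on $\alpha(G)$.} Let $M$ be a perfect matching and $S$ an independent set. Then $M(S)\s N(S)$ and $M(S)\ii S=\emptyset$, so $|S|\le n/2$. If $|S|=n/2$, then $V(G)-S=M(S)$, and since $S$ is independent, $N(S)\s V(G)-S$. This gives $|N(S)|\le|S|$, contradicting \cref{1928u3123}. Hence $\alpha(G)\le\frac{n-2}{2}$.

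\textbf{Lower bound on $\alpha(G)$: the set $S_B$.} Write $C-x=u_1\cdots u_k$ and $C^{\P}-y=v_1\cdots v_m$, where $k,m$ are even and $u_1,u_k$ (resp.\ $v_1,v_m$) are the neighbours of $x$ (resp.\ $y$). Define
\[
S_B:=B\u\{u_1,u_3,\dots,u_{k-1}\}\u\{v_2,v_4,\dots,v_{m-2}\}.
\]
It omits $x$ and the neighbours of $y$. It is independent, because only $x,y$ of $G^{\P}$ have neighbours in $X$. Its size is
\[
|B|+\tfrac{k}{2}+\tfrac{m-2}{2}=\tfrac{n-2}{2}.
\]
So $\alpha(G)=\frac{n-2}{2}$ and $S_B$ is maximum.

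\textbf{Further maximum independent sets.} The same count shows that $S_B^{\p}$, obtained from $S_B$ by replacing the odd-indexed $u_i$ with $\{u_2,\dots,u_k\}$, is also maximum. Symmetrically, with the roles of $A,B$ and $C,C^{\P}$ exchanged, the set
\[
S_A:=A\u\{v_1,v_3,\dots,v_{m-1}\}\u\{u_2,\dots,u_{k-2}\}
\]
is maximum.

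\textbf{Corona and core.} The union $S_B\u S_B^{\p}\u S_A$ contains $B$, $A$, all $u_i$ and all $v_j$, so $\corona G=V(G)$. For the core, first $S_A\ii S_B\ii V(G^{\P})=A\ii B=\emptyset$. On $C-x$ we have $S_B\ii S_B^{\p}\ii V(C)=\emptyset$. On $C^{\P}-y$, $S_B$ uses even indices and $S_A$ uses odd indices. Hence $S_A\ii S_B\ii S_B^{\p}=\emptyset$, and $\core G=\emptyset$.

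The only genuinely delicate point is the upper bound $\alpha(G)\le\frac{n-2}{2}$, which is where $2$-bicriticality enters through \cref{1928u3123}. The rest is bookkeeping. I would double-check two things there: the parity claim that odd ears preserve $|A|=|B|$, and that no ear endpoint lies in $X$. The latter holds because, by the classification, all ear endpoints lie on $P$.
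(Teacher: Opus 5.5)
Your route is the paper's. You delete the degree-$2$ cycle vertices and use the odd-ear structure of $G-X$ to get $|A|=|B|$ and a perfect matching. You bound $\alpha(G)$ from above via $2$-bicriticality; your direct argument with $M(S)$ is an unpacked version of the paper's remark that $G$ is not K\H{o}nig--Egerv\'ary. You then exhibit explicit maximum independent sets on the $A$-side and the $B$-side.

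The perfect matching, the value $\alpha(G)=\frac{n-2}{2}$, and the emptiness of the core are all correct. The corona step, however, fails as written: the vertex $v_m$ lies in none of $S_B$, $S_B^{\prime}$, $S_A$. Since $v_m$ is a neighbour of $y\in B$, it cannot belong to any independent set containing $B$. This rules out $S_B$ and $S_B^{\prime}$, whose $C^{\prime}$-part is $\{v_2,\dots,v_{m-2}\}$. Meanwhile $S_A$ contains only the odd-indexed $v_j$, and $m$ is even. So your claim that the union contains all $v_j$ is false, and $\corona{G}=V(G)$ is not established by your three sets.

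The repair is to add a fourth set, the $A$-side analogue of $S_B^{\prime}$:
\[
S_A^{\prime}:=A\cup\{v_2,v_4,\dots,v_m\}\cup\{u_2,u_4,\dots,u_{k-2}\}.
\]
It is independent, because the neighbours of $v_m$ are $v_{m-1}$ and $y\notin A$. It has $|A|+\frac{m}{2}+\frac{k-2}{2}=\frac{n-2}{2}$ elements, so it is maximum, and it contains $v_m$. Then $S_B\cup S_B^{\prime}\cup S_A\cup S_A^{\prime}=V(G)$. This is why the paper uses four sets $S_1,\dots,S_4$ rather than three. Adding a set only shrinks the intersection, so your core argument is unaffected.

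Also write your index sets explicitly as $\{u_2,u_4,\dots,u_k\}$ and $\{u_2,u_4,\dots,u_{k-2}\}$. Read literally, $\{u_2,\dots,u_k\}$ is a path and not an independent set.
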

\begin{proof}
	For an illustration of the proof, see \cref{123123jsjsj}. Let $C$
	and $C^{\P}$ be the unique odd cycles of $G$, and let $X$ be the set
	of vertices in $V(C)\u V(C^{\P})$ having degree $2$ in $G$.
	Let $G_{0},G_{1},\dots,G_{p}=G$ be an ear-pendant decomposition of $G$.
	Then $G_{0}$ is an odd cycle and $G_{1}$ is obtained from $G_{0}$ by
	adding a pendant. Moreover, for every $i=2,\dots,p$, the graph
	$G_{i}$ is obtained from $G_{i-1}$ by adding an odd ear.
	Each of these ears does not use vertices from $X$.
	Thus $G^{\P}:=G-X$ is a connected bipartite graph, say with
	bipartition $(A,B)$. Then, as in the proof of \cref{proposition},
	$G^{\P}$ admits the following decomposition:
	\[
	P,P_{2},\dots,P_{p}=G^{\P}
	\]
	\noindent where each graph is obtained from the previous one by adding
	an odd ear, and $P$ is the path that connects both cycles in $G_{1}$.
	Since $P$ has odd length, after adding each odd ear we introduce
	an even number of new vertices; therefore $G-X$ has even order and
	$|A|=|B|$. A perfect matching in $P$ extends easily to a perfect
	matching in $P_{2}$, and proceeding in this way we obtain a perfect
	matching of $G^{\P}$, which extends—by choosing a near-perfect
	matching in each cycle $C$ and $C^{\P}$—to a perfect matching of $G$.
	
	But $G$ is a non-empty $2$-bicritical graph, and hence it is not a
	K\H{o}nig--Egerv\'ary graph, that is,
	\[
	\alpha(G)+\mu(G)=\alpha(G)+\frac{n}{2}<n,
	\]
	\noindent and therefore
	$\alpha(G)\le\frac{n}{2}-1=\frac{n-2}{2}$.

\begin{figure}[H]
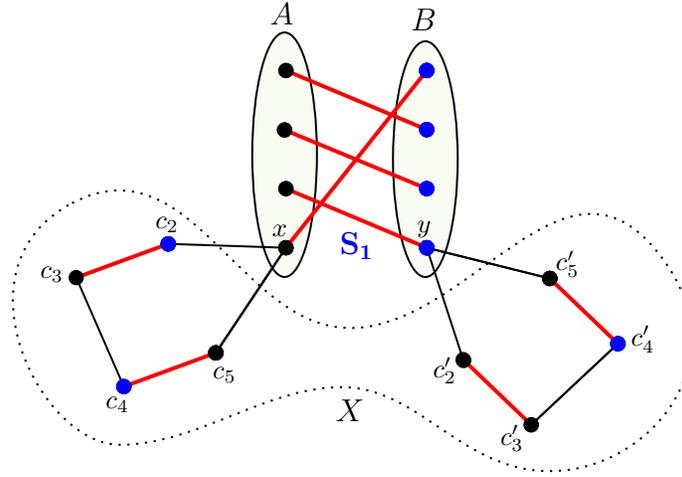

	
	\begin{center}

		\tikzset{every picture/.style={line width=0.75pt}} %set default line width to 0.75pt        
		
		% [inline block 3: 1 envs, 20666 chars -> data_tex | \begin{tikzpicture}[x=0.75pt,y=0.75pt,yscale=-1,xscale=1] 			%uncomment if require: \path (0,421); %set diagram left sta...]


	\end{center}

\caption{Illustration of the proof of \cref{123oiu21j3jj}}

	\label{123123jsjsj}
	
\end{figure}

Let $x$ and $y$ be the end-points of $P$, and write $C=c_{1},\dots,c_{k}$
and $C^{\P}=c_{1}^{\P},\dots,c_{t}^{\P}$, where $c_{1}=c_{k}=x\in A$
and $c_{1}^{\P}=c_{t}^{\P}=y\in B$. Then the set

\begin{align*}
	S_{1} & =B\u\{c_{i}:i\text{ is even}\}\u\{c_{i}^{\P}:i\text{ is even and }i\neq2\},
\end{align*}

\noindent is an independent set of $G$ such that
\begin{eqnarray*}
	\a{S_{1}} & = & \a B+\frac{\a{V(C)}-1}{2}+\frac{\a{V(C^{\P})}-3}{2}\\
	& = & \a B+\frac{\a{V(C)\u V(C^{\p})}-2}{2}-1\\
	& = & \frac{n-2}{2}.
\end{eqnarray*}

\noindent Thus $S_{1}$ is a maximum independent set and
$\alpha(G)=\frac{n-2}{2}$. Similarly, we may consider the following
maximum independent sets:

\begin{align*}
	S_{2} & =B\u\{c_{i}:i\text{ is odd and }i\neq1\}\u\{c_{i}^{\P}:i\text{ is odd and }i\neq1,t\},\\
	S_{3} & =A\u\{c_{i}^{\P}:i\text{ is even}\}\u\{c_{i}:i\text{ is even and }i\neq2\},\\
	S_{4} & =A\u\{c_{i}^{\P}:i\text{ is odd and }i\neq1\}\u\{c_{i}:i\text{ is odd and }i\neq1,k\},
\end{align*}

\noindent It is easy to see that the union/intersection of the sets
$S_{1},\dots,S_{4}$ yields $V(G)$ and $\emptyset$, respectively.
In other words, $\core G=\emptyset$ and $\corona G=V(G).$
\end{proof}

\begin{corollary}\label{123kj123j}
If $G$ is a bicyclic odd-linked graph of order
$n$, then $\alpha(G)+\mu(G)=n-1$.
\end{corollary}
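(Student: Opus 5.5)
This is immediate from \cref{123oiu21j3jj}; the only thing to extract from it is the value of $\mu(G)$.

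Since $G$ has a perfect matching (first item of \cref{123oiu21j3jj}), we get $\mu(G)=\frac{n}{2}$, and in particular $n$ is even. The second item gives $\alpha(G)=\frac{n-2}{2}$. Adding the two values,
\[
\alpha(G)+\mu(G)=\frac{n-2}{2}+\frac{n}{2}=n-1 .
\]

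The one step that needs care is already handled inside the proof of \cref{123oiu21j3jj}. There the perfect matching of $G-X$, built along the odd ear decomposition starting from $P$ (of odd length), is extended by near-perfect matchings of the two odd cycles. This extension covers all of $X$ because each cycle minus its attachment vertex ($x\in V(C)$, $y\in V(C^{\P})$) is an even path. So $\mu(G)=\frac{n}{2}$ holds exactly.

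The independence number is pinned down by two bounds. The upper bound $\alpha(G)\le\frac{n-2}{2}$ comes from $G$ not being K\H{o}nig--Egerv\'ary, since it is $2$-bicritical. The lower bound comes from the explicit independent set $S_1$ of size $\frac{n-2}{2}$.

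Therefore $\alpha(G)+\mu(G)=n-1$.
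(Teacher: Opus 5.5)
Your proof is correct and is the same argument the paper intends: the corollary is stated without proof and follows directly from \cref{123oiu21j3jj}, by adding $\mu(G)=\frac{n}{2}$ (from the perfect matching) to $\alpha(G)=\frac{n-2}{2}$. One wording point: $C-x$ and $C'-y$ are paths with an even number of vertices but odd length, and the even vertex count is what gives them perfect matchings, so calling them ``even paths'' could be misread under the paper's convention that parity of a path refers to its length.
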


\begin{corollary}\label{123kj123j123kj123j}
If $G$ is a bicyclic odd-linked graph, then
$\a{\core G}+\a{\corona G}=2\alpha(G)$ and $\corona G$ and $N(\core G)$
form a partition of $V(G)$. 
\end{corollary}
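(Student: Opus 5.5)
The plan is to read everything off from \cref{123oiu21j3jj}, which already gives $\core G=\emptyset$, $\corona G=V(G)$, $\alpha(G)=\frac{n-2}{2}$, and a perfect matching of $G$. No new matching or independent-set construction should be needed.

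\emph{Partition statement.} Since $\core G=\emptyset$, we get $N(\core G)=N(\emptyset)=\emptyset$. Hence $\corona G\cap N(\core G)=\emptyset$ and $\corona G\cup N(\core G)=V(G)$. So the two sets form a partition of $V(G)$, with one part empty, in the degenerate sense also used in \cref{123kj123j123kj123j123kj123j123kj123j}. This part is purely formal.

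\emph{Counting statement.} I would substitute the values from \cref{123oiu21j3jj}. This gives
\[
\a{\core G}+\a{\corona G}=0+n=n,
\qquad
2\alpha(G)=n-2 .
\]
I flag here that these two quantities differ by $2$. The stated identity is therefore not what this substitution produces, and this step is where I expect the plan to fail.

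To be sure the discrepancy does not come from misreading \cref{123oiu21j3jj}, I would recheck its two inputs.
\begin{itemize}
\item \emph{Value of $\alpha(G)$.} The upper bound $\alpha(G)\le\frac{n-2}{2}$ follows from the perfect matching together with $G$ not being König--Egerváry. The set $S_1$ attains this bound.
\item \emph{Core and corona.} These follow from the four sets $S_1,\dots,S_4$. Their union is $V(G)$ and their intersection is $\emptyset$.
\end{itemize}
If both inputs stand, the first item as written cannot be derived by this route for odd-linked graphs. The argument would then establish the partition statement and the relation $\a{\core G}+\a{\corona G}=n=2\alpha(G)+2$ instead.
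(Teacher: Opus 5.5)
Your diagnosis is right, and you can state it as a conclusion rather than a suspicion. The paper gives no argument for this corollary beyond reading it off \cref{123oiu21j3jj}, exactly as you do. That substitution yields $\a{\core G}+\a{\corona G}=0+n=n=2\alpha(G)+2$, so the first item is false as written; the defect is in the statement, not in your plan.

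Both inputs you wanted to recheck do stand. A small example also settles the matter independently of them. Take triangles $x a_1 a_2$ and $y b_1 b_2$ joined by the edge $xy$. This graph is $2$-bicritical: every vertex has degree at least $2$, every independent pair has exactly $4$ neighbours, and there are no independent triples. It is odd-linked with $k=1$. Here $n=6$ and $\alpha(G)=2$. The maximum independent sets are the pairs with one vertex from each triangle, except $\{x,y\}$. Every vertex lies in one of them and no vertex lies in all of them, so $\core G=\emptyset$ and $\corona G=V(G)$. Hence $\a{\core G}+\a{\corona G}=6\neq 4=2\alpha(G)$. Joining the triangles by a path of length $3$ instead gives $n=8$, $\alpha(G)=3$, and again the sum is $8=2\alpha(G)+2$.

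Your partition argument is correct: $N(\core G)=N(\emptyset)=\emptyset$ and $\corona G=V(G)$ give $\corona G\ud N(\core G)=V(G)$ trivially. One small inaccuracy: this is not a degenerate instance of what happens in the even-linked theorem, since there $N(\core G)=A$ and $\corona G=V(G)-A$ are both nonempty.

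The correct form of the corollary is therefore $\a{\core G}+\a{\corona G}=2\alpha(G)+2$, together with $\corona G\ud N(\core G)=V(G)$. Odd-linked graphs thus behave like the disconnected case. The first bullet of the concluding classification theorem counts them among connected graphs whose odd cycles share at most one vertex and assigns them the value $2\alpha(G)$, so it needs the same correction.
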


\section{Fused-odd graphs}\label{sss5}

For completeness in this section, we analyze fused-odd graphs; due to the structure of these graphs, the analysis is much simpler than in the previous cases. For the rest, recall the following classical result on factor-critical graphs.

\begin{theorem}
	[\cite{lovasz1972structure}\label{lovasz}] A graph $G$ is factor-critical
	if and only if $G$ has an odd-length ear decomposition starting from
	an odd cycle. 
\end{theorem}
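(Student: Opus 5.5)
The plan is to prove both implications by elementary matching arguments, with an induction on the number of ears for sufficiency. Throughout, \emph{ears} are allowed to be \emph{closed}, i.e.\ their two endpoints may coincide, as in Lovász's original formulation. An odd ear with $2m+1$ edges adds exactly $2m$ new vertices, so every graph with an odd ear decomposition starting from an odd cycle has odd order.

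\emph{Sufficiency.} Induct on the number of ears. An odd cycle is factor-critical. Suppose $G_i$ arises from a factor-critical $G_{i-1}$ by adding an odd ear $a,v_1,\dots,v_{2m},b$, and fix a vertex $v$ of $G_i$.
\begin{itemize}
\item If $v\in V(G_{i-1})$, take a perfect matching of $G_{i-1}-v$ and add the edges $v_1v_2,\dots,v_{2m-1}v_{2m}$.
\item If $v=v_j$ with $j$ odd, pair $v_1,\dots,v_{j-1}$ consecutively (an even number of vertices). Then use $v_{j+1}v_{j+2},\dots,v_{2m}b$ to absorb the endpoint $b$, and finish with a perfect matching of $G_{i-1}-b$.
\item If $j$ is even, argue symmetrically, absorbing $a$ instead of $b$.
\end{itemize}
The same argument works when $a=b$. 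Hence $G_i$ is factor-critical.

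\emph{Necessity.} Let $G$ be factor-critical with at least three vertices, so $G$ is connected.

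First, $G$ has an odd cycle. If $G$ were bipartite with bipartition $(U,W)$ and $|U|>|W|$, then $G-w$ for any $w\in W$ could have no perfect matching.

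Now let $H$ be a maximal subgraph of $G$ that admits an odd ear decomposition starting from an odd cycle, and suppose $H\neq G$.
\begin{itemize}
\item If some edge of $G$ outside $H$ has both ends in $V(H)$, that edge is an ear of length $1$, contradicting maximality.
\item Otherwise $H$ is induced and $V(H)\neq V(G)$. By connectivity, pick $w\notin V(H)$ adjacent to some $u\in V(H)$. Let $M$ be a perfect matching of $G-w$. Since $|V(H)|$ is odd and $w\notin V(H)$, some vertex $a\in V(H)$ has $M(a)\notin V(H)$.
\end{itemize}

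Starting from $a$, grow an $M$-alternating path outside $H$ that begins with the $M$-edge $aM(a)$. Follow it until one of the following happens.
\begin{itemize}
\item It reaches $w$, which is $M$-unsaturated. The path from $a$ to $w$ starts with an $M$-edge and ends with a non-$M$-edge, so it has even length. Appending the edge $wu$ gives an odd ear from $a$ to $u$, which is a closed ear if $a=u$.
\item It re-enters $H$ through a non-$M$-edge. The alternating path from $a$ then has odd length and is an odd ear.
\end{itemize}
Either outcome contradicts the maximality of $H$, so $H=G$.

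The main obstacle is guaranteeing that such a simple alternating path exists with exactly one of these two terminations. The worry is that alternating walks outside $H$ may revisit vertices, or may only return to $H$ via an $M$-edge, which gives the wrong parity. Since $H$ is induced and every vertex of $V(G)\setminus\{w\}$ is $M$-saturated, an $M$-edge can never lead back into $H$, so parity is automatic. I would therefore first try a reachability argument: let $R$ be the set of vertices outside $H$ that are reachable from $M(a)$ by $M$-alternating paths whose last edge is a matching edge. If neither termination ever occurs, I would argue that $R$ contradicts factor-criticality via a Tutte-type count applied to $G-a$. If that fails, I would instead pick $a$ and $M$ minimizing the length of the shortest alternating connection, which is the standard device in Lovász's proof.
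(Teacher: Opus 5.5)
Your sufficiency direction is correct, and you are right that closed ears must be allowed. The necessity direction, however, has a genuine gap.

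First, the parity in your second termination is wrong. An $M$-alternating path that starts with the $M$-edge $aM(a)$ and re-enters $H$ through a non-$M$-edge has \emph{even} length, by exactly the count you use in your first bullet, so it yields an even ear. The good termination is re-entering $H$ through an $M$-edge. Your claim that this cannot happen is false: the vertices of $H$ matched outside $H$ are odd in number and may be three or more. So a path can return via $yM(y)$ with $M(y)\in V(H)\setminus\{a\}$.

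Second, and more seriously, no argument of this shape can be completed. The only properties of $H$ you use are that it is induced, has odd order, has an odd ear decomposition, and $H\neq G$; maximality enters only as ``no ear can be attached''. These properties do not guarantee an attachable odd ear. Take $G$ on $\{a,b,c,x,y\}$ with edges $ab,bc,ca,xa,xb,yb,yc$.
\begin{itemize}
\item $G$ is factor-critical: it is the $5$-cycle $x\,a\,c\,y\,b\,x$ plus the ears $ab$ and $bc$, so your own sufficiency argument applies.
\item The induced triangle $H=abc$ admits no odd ear. Since $xy\notin E(G)$, every ear has a single interior vertex and length $2$.
\item Your procedure confirms this. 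With $w=x$, $u=a$, $M=\{ab,cy\}$ it produces only the path $c,y,b$ of length $2$, and every other choice fails similarly.
\end{itemize}
The triangle is not inclusion-maximal in $G$. But your argument uses no more of maximality than the above, so it would equally ``prove'' that an odd ear attaches to the triangle. Neither a reachability/Tutte count nor a minimality device can produce a contradiction here.

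The missing idea is to grow $H$ under a matching invariant.
\begin{itemize}
\item Fix $v$ and a perfect matching $M$ of $G-v$. Start from $H=\{v\}$ and maintain that no $M$-edge joins $V(H)$ to its complement. Chords inside $V(H)$ are added as ears of length $1$.
\item Given $u\in V(H)$ adjacent to $w\notin V(H)$, let $M'$ be a perfect matching of $G-w$; this is your $M$.
\item The only vertices of degree $1$ in $M\triangle M'$ are $v$ and $w$. So the component containing $w$ is an alternating path $Q$ from $w$ to $v$ that starts with an $M$-edge.
\item Let $z$ be the first vertex of $Q$ in $V(H)$. By the invariant, the edge entering $z$ lies in $M'$, so $Q[w,z]$ has even length.
\item Hence $u\,w\,Q[w,z]$ is an odd ear (closed when $z=u$). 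Its interior is covered by $M$-edges, so the invariant is preserved.
\end{itemize}
The first ear, attached to $\{v\}$, is necessarily closed, i.e.\ an odd cycle through $v$. This yields the required decomposition starting from an odd cycle.
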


\begin{lemma}\label{lema2}
	A factor-critical graph of order at least two is not a
	K\H{o}nig--Egerv\'ary graph.
\end{lemma}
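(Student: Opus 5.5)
We want to show that a factor-critical graph $G$ of order $n\ge 2$ satisfies $\alpha(G)+\mu(G)<n$. The plan is to compare $\mu(G)$ directly with the bound on $\alpha(G)$ forced by factor-criticality. The graph need not be connected a priori, but factor-critical graphs are connected, since $G-v$ having a perfect matching for every $v$ rules out two components of odd order and forces odd total order. So we may assume $G$ is connected, and we work with a fixed maximum independent set $S$.

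\textbf{Step 1: compute $\mu(G)$.} Since $G-v$ has a perfect matching, $n$ is odd and $\mu(G)=\frac{n-1}{2}$. A perfect matching of $G$ is impossible, because $n$ is odd. The König--Egerváry condition $\alpha(G)+\mu(G)=n$ would therefore force $\alpha(G)=\frac{n+1}{2}$.

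\textbf{Step 2: show $\alpha(G)\le\frac{n-1}{2}$.} Take any independent set $S$ and pick a vertex $v\notin S$. Such a vertex exists because $n\ge 2$ and $G$ is connected with an edge, so $S\ne V(G)$. The graph $G-v$ has a perfect matching $M$. Every vertex of $S$ is matched by $M$ to a vertex outside $S$, because $S$ is independent, and these partners are distinct. Hence $|S|\le |V(G-v)\setminus S|=n-1-|S|$, which gives $|S|\le \frac{n-1}{2}$.

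\textbf{Step 3: conclude.} Combining the two steps, $\alpha(G)+\mu(G)\le n-1<n$, so $G$ is not König--Egerváry.

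Alternatively, one could invoke \cref{lovasz}, take an odd ear decomposition from an odd cycle, and induct using the odd-cycle base case. The direct matching argument above avoids this. The only subtle point is ensuring a vertex $v\notin S$ exists, and this is immediate from $n\ge 2$ together with connectivity.
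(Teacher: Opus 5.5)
Your proof is correct and is essentially the paper's argument: both take a vertex $v$ outside a (maximum) independent set $S$ and use the perfect matching of $G-v$, which must send $S$ injectively into $V(G-v)\setminus S$. The paper phrases this as a contradiction with the forced value $\alpha(G)=\frac{n+1}{2}$, while you prove $\alpha(G)\le\frac{n-1}{2}$ directly, which gives $\alpha(G)+\mu(G)\le n-1$, the inequality later used in \cref{io12312}. Your detour through connectivity is harmless but unnecessary: $n\ge 2$ forces $n\ge 3$, so the perfect matching of $G-v$ already contains an edge, and hence $S\neq V(G)$.
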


\begin{proof}
	Let $G$ be a factor-critical graph of order $n$. Suppose
	that $G$ is a K\H{o}nig--Egerv\'ary graph; then
	$\mu(G)=\frac{n-1}{2}$, and hence
	$\alpha(G)=n-\frac{n-1}{2}=\frac{n+1}{2}$.
	Let $S$ be a maximum independent set of $G$ and let $v\notin S$.
	Then $G-v$ has order $n-1$ and an independent set with
	$\frac{n+1}{2}$ vertices; therefore, $G-v$ does not have a
	perfect matching, a contradiction.
\end{proof}

\begin{theorem}\label{io12312}
	Let $G$ be a fused-odd graph of order $n$. Then
	\begin{itemize}
		\item $\mu(G)=\alpha(G)=\frac{n-1}{2}$,
		\item $\core G=\emptyset$, 
		\item $\corona G=V(G)$ if and only if the odd cycles share at least two vertices.
	\end{itemize}
\end{theorem}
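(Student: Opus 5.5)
By \cref{lovasz}, $G$ is factor-critical. Indeed, $G$ is an odd cycle $C_0$ together with one odd ear $Q$, and that is an odd-length ear decomposition starting from an odd cycle. Hence $n$ is odd and $\mu(G)=\frac{n-1}{2}$. By \cref{lema2}, $G$ is not König--Egerváry, so $\alpha(G)<n-\mu(G)=\frac{n+1}{2}$, which gives $\alpha(G)\le\frac{n-1}{2}$. The matching lower bound $\alpha(G)\ge\frac{n-1}{2}$ will come from explicit independent sets built in the case analysis below.

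Next I would pin down the shape of $G$. Let $u,v$ be the end-points of the ear $Q$ on $C_0$.

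\emph{Case $u=v$.} The ear closes into an odd cycle through $u$, so $G$ consists of two odd cycles $C$ and $C^{\P}$ of lengths $2s+1$ and $2t+1$ sharing only the vertex $u$, and $n=2s+2t+1$. Deleting $u$ leaves two paths with $2s$ and $2t$ vertices. Each has exactly two perfect-size alternating independent sets, of sizes $s$ and $t$. This gives four independent sets of size $s+t=\frac{n-1}{2}$ that avoid $u$, and together they cover every vertex except $u$. Each vertex other than $u$ is also missed by one of them, so $\core G=\emptyset$. On the other hand, any independent set containing $u$ excludes $u$'s two neighbours on each cycle. What remains of the two cycles are paths on $2s-2$ and $2t-2$ vertices, so such a set has size at most $1+(s-1)+(t-1)<\frac{n-1}{2}$. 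Hence $u\notin\corona G$, which is the ``only if'' direction, since here the cycles share a single vertex.

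\emph{Case $u\neq v$.} Now $G$ is a theta graph: three internally disjoint $u$--$v$ paths of lengths $a,b,c$, where $c=|Q|$ is odd and $a+b=|C_0|$ is odd. After relabelling, $a$ is odd and $b\ge 2$ is even. Then $a+c$ is even and $b+c$ is odd, so the two odd cycles share the whole path of length $b$, hence at least two vertices. Also $n=a+b+c-1$ and $\frac{n-1}{2}=\frac{a+b+c}{2}-1$.

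I would then write down explicit maximum independent sets by alternating along the three paths, distinguishing whether the set contains $u$, contains $v$, or contains neither. For example, take neither $u$ nor $v$. Then pick $\frac{a-1}{2}$, $\frac{b-2}{2}$ and $\frac{c-1}{2}$ internal vertices from the three paths respectively. The total is $\frac{a+b+c}{2}-2$, which is one short, so instead I would include exactly one of $u,v$ and adjust the parities. By shifting the alternation along each path, every vertex should lie in some such set and be missed by another. That yields $\alpha(G)=\frac{n-1}{2}$, $\core G=\emptyset$ and $\corona G=V(G)$, completing the ``if'' direction.

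The main obstacle is this bookkeeping in the theta case. I must check that for each internal vertex of each of the three paths, and for $u$ and $v$ themselves, there is a set of size exactly $\frac{n-1}{2}$ containing it and another avoiding it. I would organise this by fixing the status of $u$ and $v$ (in or out of the set) and noting that, on a path whose end-points are excluded, an alternating choice can be slid to start at either end whenever that path has an even number of internal vertices.
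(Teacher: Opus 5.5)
Your upper bound $\alpha(G)\le\frac{n-1}{2}$ is correct, and so is your case $u=v$. There you even prove $u\notin\corona G$, which the paper only calls ``easy to see''.

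The gap is the theta case $u\neq v$. It carries the whole ``if'' direction, together with $\alpha(G)\ge\frac{n-1}{2}$ and $\core G=\emptyset$ in that case. You defer all of it to bookkeeping you never carry out, and the one count you give is wrong.

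The $u$--$v$ path of length $b$ has $b-1$ internal vertices. This is an \emph{odd} number since $b$ is even, so with $u,v$ excluded the path contributes $\frac{b}{2}$, not $\frac{b-2}{2}$. The sets avoiding both $u$ and $v$ therefore already have size $\frac{a-1}{2}+\frac{b}{2}+\frac{c-1}{2}=\frac{n-1}{2}$; they are not ``one short''.

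This matters for the covering check. In those sets the contribution of the $b$-path is forced: only its internal vertices in odd positions counted from $u$. So $u$, $v$, and the even-position internal vertices must be covered by sets containing $u$ or $v$. Those sets have size $1+\frac{a-1}{2}+\frac{b-2}{2}+\frac{c-1}{2}=\frac{n-1}{2}$, whereas sets containing both are one short. With these corrected counts your plan can be completed, but as submitted the theta case is unproved.

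The paper avoids this bookkeeping altogether. Take any vertex $x$ lying on both odd cycles. Then $G-x$ is bipartite and, since $G$ is factor-critical, has a perfect matching. Hence $\alpha(G-x)=\frac{n-1}{2}=\alpha(G)$ and $\Omega(G-x)\subseteq\Omega(G)$.

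Take one colour class in every component of $G-x$, in the two complementary ways. This gives two disjoint maximum independent sets of $G$ whose union is $V(G)-\{x\}$, so $\core G=\emptyset$. If the cycles share at least two vertices (in the theta case, the whole $b$-path), two different choices of $x$ give $\corona G=V(G)$.

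Your four sets in the case $u=v$ are exactly this construction with $x=u$. Applying it with $x\in\{u,v\}$ closes the theta case in two lines.
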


\begin{proof}
	By \cref{lovasz}, $G$ is a factor-critical graph, and hence
	$\mu(G)=\frac{n-1}{2}$.
	But by \cref{lema2}, $G$ is not a K\H{o}nig--Egerv\'ary graph; therefore,
	$\alpha(G)+\mu(G)=\alpha(G)+\frac{n-1}{2}\le n-1$, and thus
	$\alpha(G)\le\frac{n-1}{2}$.
	Since $G$ is a fused-odd graph, there is a vertex $x$ such that the graph $G-x$ is a bipartite graph with perfect matching. Then note that $\alpha(G-x)+\mu(G-x)=n-1$, $\mu(G-x)=\mu(G)$
	and $\alpha(G)+\mu(G)<n$. But since $\alpha(G-x)\le\alpha(G)$ we
	have that $\alpha(G-x)=\alpha(G)$. Therefore $\Omega(G-x)\s\Omega(G)$,
	and hence $\core G=\emptyset$. 
	
	Trivially $x$ is unique if and only if the odd cycles share
	exactly one vertex (see \cref{asopdkakso123123}). Otherwise the previous argument
	shows that $\corona G=V(G)$. If $x$ is unique it is easy
	to see that $\corona G=V(G)-\{x\}$.

\end{proof}

\begin{corollary}\label{io12312io12312}
	Let $G$ be a fused-odd graph of order $n$. Then
	\begin{itemize}
		\item $\a{\corona G}+\a{\core G}=2\alpha(G)+1$,
		\item $\alpha(G)+\mu(G)=n-1$.
	\end{itemize}
\end{corollary}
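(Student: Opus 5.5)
Both items follow directly from \cref{io12312}, so the plan is pure bookkeeping with the values computed there. For the second item I would simply add the two equal quantities given by the first bullet of \cref{io12312}: since $\mu(G)=\alpha(G)=\frac{n-1}{2}$, we get $\alpha(G)+\mu(G)=n-1$. This needs no case analysis. It is consistent with \cref{lema2}: $G$ is factor-critical, hence not K\H{o}nig--Egerv\'ary, so the sum must fall short of $n$, and here it falls short by exactly one.

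For the first item I would use $\core G=\emptyset$ and split according to how the two odd cycles meet. If they share at least two vertices, then $\corona G=V(G)$ by \cref{io12312}, so
\[
\a{\corona G}+\a{\core G}=n=2\cdot\frac{n-1}{2}+1=2\alpha(G)+1,
\]
as claimed. The step I would check most carefully is the remaining configuration, where the two odd cycles share exactly one vertex $x$ (the bowtie-like example in \cref{asopdkakso123123}). There the proof of \cref{io12312} gives $\corona G=V(G)-\{x\}$, so the same computation yields $n-1=2\alpha(G)$ rather than $2\alpha(G)+1$.

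The bowtie on five vertices confirms this. It has $\alpha=2$, and its maximum independent sets take one non-central vertex from each triangle. Hence its core is empty, its corona has four vertices, and the sum is $4=2\alpha$.

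So I would first test whether the definition of an ear excludes coinciding endpoints; the definition says only that the endpoints lie in $B$ and does not say they are distinct. If it does, the single-shared-vertex graph is not fused-odd and the formula holds unconditionally. Otherwise I would state the first item under the hypothesis that the odd cycles share at least two vertices, and record that the sum equals $2\alpha(G)$ when they share a single vertex. This dichotomy matches the abstract's claim that the sum takes the values $2\alpha(G)$ or $2\alpha(G)+1$ depending on the relative position of the odd cycles.
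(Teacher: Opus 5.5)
Your route is the paper's: the corollary is read off from \cref{io12312}. Your bookkeeping for the second item, and for the first item when the odd cycles share at least two vertices, is exactly that derivation. Your caveat is also correct, and you can drop the hedge, because graphs whose two odd cycles meet in a single vertex are fused-odd in this paper.

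There are three reasons. First, the definition of an ear only requires the \emph{internal} vertices to be distinct. Second, the third graph in \cref{asopdkakso123123}, a pentagon and a triangle glued at one vertex, is explicitly labelled fused-odd. Third, closed ears are forced by \cref{puyelarpendietnedesc}. The bowtie is connected and $2$-bicritical, and it contains no odd homeomorph of $K_4$. It also cannot arise from a triangle by an open ear, which yields a theta graph, or by a pendant, whose cycle is joined by a path of positive length. So the branch of your hedge in which such graphs are excluded cannot occur.

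For these graphs the proof of \cref{io12312} gives $\core G=\emptyset$ and $\corona G=V(G)-\{x\}$. Hence $|\corona G|+|\core G|=n-1=2\alpha(G)$. For the pictured pentagon--triangle graph, $n=7$, $\alpha(G)=3$, and the sum is $6$.

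So the first item is false as stated. It needs the extra hypothesis that the two odd cycles share at least two vertices. This agrees with the paper's own concluding theorem, which assigns $2\alpha(G)$ to connected graphs whose odd cycles share at most one vertex. The second item, $\alpha(G)+\mu(G)=n-1$, holds for every fused-odd graph, as you show.
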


\section{Conclusions}\label{sss6}

In this final section, we summarize the main consequences of the previous
results and collect general statements for $2$-bicritical graphs with at most
two odd cycles.
We also propose some conjectures and open problems motivated by this work.

We now collect the main consequences obtained in the previous sections into a unified statement. Note that \cref{io12312} and \cref{io12312io12312} also hold when $G$ is a one-odd cycle graph. By definition, one-odd cycle, fused-odd, even-linked, and odd-linked graphs are connected graphs. If $G$ is a disconnected $2$-bicritical graph with at most two odd cycles, then by \cref{puyelarpendietnedesc}, $G$ is formed by two components, each of which is an odd cycle. Hence, the disconnected case is trivial, as well as the one-odd cycle case. Therefore, by \cref{123kj123j}, \cref{123kj123j123kj123j}, \cref{123kj123j123kj123j123kj123j}, and \cref{123kj123j123kj123j123kj123j123kj123j}, we obtain the following.

\begin{theorem}
	Let $G$ be a $2$-bicritical graph with two odd cycles, then:
	\begin{itemize}
		\item If $G$ is connected and the two odd cycles share at most one vertex, then
		$\a{\core G}+\a{\corona G}=2\alpha(G)$,
		\item If both odd cycles share at least two vertices, then
		$\a{\core G}+\a{\corona G}=2\alpha(G)+1$,
		\item If $G$ is disconnected, then
		$\a{\core G}+\a{\corona G}=2\alpha(G)+2$.
	\end{itemize}
\end{theorem}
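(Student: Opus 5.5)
The plan is a case analysis along the classification of \cref{sss2}, reading off $\alpha(G)$, $\core G$ and $\corona G$ from the results already proved for each family.

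\textbf{Disconnected case.} By \cref{puyelarpendietnedesc} each component of a $2$-bicritical graph is itself $2$-bicritical and has an ear-pendant decomposition starting from an odd cycle. Every component therefore contains an odd cycle. Since $G$ has only two odd cycles, $G$ has exactly two components. Neither component can receive an ear or a pendant without creating a third odd cycle, so $G=C\ud C^{\P}$ with $C,C^{\P}$ odd cycles. Then
\[
\alpha(G)=\frac{\a{V(C)}-1}{2}+\frac{\a{V(C^{\P})}-1}{2}=\frac{n-2}{2}.
\]
In an odd cycle every vertex is omitted by some maximum independent set and covered by another, so the core of each cycle is empty and its corona is the whole cycle. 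Hence $\core G=\emptyset$ and $\corona G=V(G)$, and the sum is $n=2\alpha(G)+2$.

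\textbf{Connected case.} Here $G_0$ is an odd cycle, as argued in \cref{sss2}, and $G$ is fused-odd, odd-linked or even-linked.
\begin{itemize}
\item In the odd-linked and even-linked cases the two odd cycles are vertex-disjoint, because they are joined by the pendant path $P$ of positive length. So they share at most one vertex, and \cref{123kj123j123kj123j} and \cref{123kj123j123kj123j123kj123j123kj123j} give $\a{\core G}+\a{\corona G}=2\alpha(G)$.
\item In the fused-odd case I use \cref{io12312} directly, which gives $\alpha(G)=\frac{n-1}{2}$ and $\core G=\emptyset$. If the two odd cycles share at least two vertices, then $\corona G=V(G)$, so the sum is $n=2\alpha(G)+1$. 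This is the second item.
\item If the fused-odd cycles share exactly one vertex $x$, the proof of \cref{io12312} gives $\corona G=V(G)-\{x\}$. The sum is then $n-1=2\alpha(G)$, which falls under the first item.
\end{itemize}
So I would not cite \cref{io12312io12312} blindly here, since the one-shared-vertex subcase gives $2\alpha(G)$ rather than $2\alpha(G)+1$.

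\textbf{Main obstacle.} The delicate step is justifying $\corona G=V(G)-\{x\}$ when the fused cycles meet in the single vertex $x$. I expect to argue as follows.
\begin{itemize}
\item The two cycles $C_1,C_2$ meet only in $x$. Each $C_i-x$ is a path with an even number of vertices, so $\alpha(C_i-x)=\frac{\a{V(C_i)}-1}{2}$.
\item Consequently any independent set of $G$ containing $x$ has at most $\frac{\a{V(C_1)}-3}{2}+\frac{\a{V(C_2)}-3}{2}+1=\alpha(G)-1$ vertices. So $x$ lies in no maximum independent set.
\item Every other vertex is covered by choosing one of the two alternating maximum independent sets on each path $C_i-x$.
\end{itemize}
I also need to check that sharing at least two vertices is exactly the case where the vertex $x$ with $G-x$ bipartite is not unique. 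That holds because then the whole shared path between the cycles consists of such vertices.
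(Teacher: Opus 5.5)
The step that fails is the odd-linked case. You cite \cref{123kj123j123kj123j} to obtain $|\core G|+|\corona G|=2\alpha(G)$, but that corollary contradicts \cref{123oiu21j3jj}, the theorem it is supposed to summarize. That theorem gives $\core G=\emptyset$, $\corona G=V(G)$ and $\alpha(G)=\frac{n-2}{2}$, so $|\core G|+|\corona G|=n=2\alpha(G)+2$, and the theorem is the correct one.

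For a minimal check, join the triangles $xab$ and $ycd$ by the edge $xy$. This is a connected $2$-bicritical graph (a triangle plus a pendant whose path has length $1$) with two vertex-disjoint odd cycles. Every independent set meets each triangle at most once, so $\alpha(G)=2$. The maximum independent sets are the eight pairs with one vertex in each triangle, excluding $\{x,y\}$. Hence $\core G=\emptyset$, $\corona G=V(G)$, and the sum is $6=2\alpha(G)+2$.

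So the first item of the statement is false for every odd-linked graph, and no argument can close this step. The paper's own derivation cites the same corollary and breaks at the same point.

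The rest of your case analysis is sound. Your refusal to cite \cref{io12312io12312} blindly was exactly right: when the fused cycles meet in a single vertex $x$, your count $\corona G=V(G)-\{x\}$ gives $2\alpha(G)$ rather than $2\alpha(G)+1$. Running the same consistency check on the odd-linked corollary would have exposed the problem.

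What the per-family results actually give is:
$2\alpha(G)$ for even-linked graphs and for fused-odd graphs whose cycles share exactly one vertex;
$2\alpha(G)+1$ for fused-odd graphs whose cycles share at least two vertices;
$2\alpha(G)+2$ for odd-linked graphs and for the disconnected case.
In particular, for a connected graph with vertex-disjoint odd cycles, the value is decided by the parity of the linking path, not by the number of shared vertices.
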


\begin{theorem}
	Let $G$ be a $2$-bicritical graph with at most two odd cycles. Then
	$\corona G\ud N(\core G)=V(G)$ if and only if there do not exist two odd cycles that share at least two vertices.
\end{theorem}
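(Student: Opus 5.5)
The plan is a case analysis along the classification of \cref{sss2}. A $2$-bicritical graph with at most two odd cycles is either disconnected (two disjoint odd cycles, by \cref{puyelarpendietnedesc}), or a one-odd cycle graph, or a fused-odd, even-linked or odd-linked graph. In each case I will read off $\core G$, $N(\core G)$ and $\corona G$ from the results already proved. Then I will check two things: whether $\corona G\cap N(\core G)=\emptyset$, and whether $\corona G\cup N(\core G)=V(G)$. Finally I will match the outcome against the relative position of the odd cycles.

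\emph{Linked and trivial cases.} If $G$ is even-linked, \cref{123kj123j123kj123j123kj123j123kj123j} already gives $N(\core G)=A$ and $\corona G=V(G)-A$, so the required partition holds. If $G$ is odd-linked, \cref{123oiu21j3jj} gives $\core G=\emptyset$ and $\corona G=V(G)$, so again $\corona G\ud N(\core G)=V(G)$. In both families the two odd cycles are vertex-disjoint, joined by the path $P$, so they do not share two vertices. The same reasoning covers the one-odd cycle case and the disconnected case. In the latter, each component is an odd cycle, so $\core G=\emptyset$ and $\corona G=V(G)$.

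\emph{Fused-odd case.} Here \cref{io12312} gives $\core G=\emptyset$, hence $N(\core G)=\emptyset$, and the condition reduces to $\corona G=V(G)$. By the same theorem this holds if and only if the odd cycles share at least two vertices, that is, when the odd ear has distinct endpoints. When the ear closes up at a single vertex $x$, the same theorem gives $\corona G=V(G)-\{x\}$. To confirm that case I would use the argument of \cref{io12312}. Every maximum independent set lies in $\Omega(G-x)$, because $\alpha(G-x)=\alpha(G)$ and $x$ is the unique vertex whose removal leaves a bipartite graph with a perfect matching. Then no maximum independent set contains $x$.

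\emph{Main obstacle.} The crux is reconciling the fused-odd case with the stated direction of the equivalence. The computation above makes the disjoint-union identity fail precisely when the two cycles meet in exactly one vertex, and hold when they share a path of at least two vertices. This conflicts with the wording ``share at least two vertices''. So the step I would scrutinise first is the claim $\corona G=V(G)-\{x\}$ in \cref{io12312}. To do that, I would exhibit explicitly, on the two triangles glued at $x$, a maximum independent set through $x$, or prove that none exists. I would then align the hypothesis of the statement with whichever of the two fused configurations actually breaks $\corona G\ud N(\core G)=V(G)$. Once the fused case is settled, the proof is simply the union of the four cases above.
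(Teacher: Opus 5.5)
Your case split is the same one the paper uses: it obtains this theorem by collecting the results for the linked, fused-odd, one-odd-cycle and disconnected cases. Your treatment of every case except the fused-odd one is fine. However, the tension you flag at the end cannot be removed by re-examining \cref{io12312}. That result is correct, and the statement as worded is false in both directions, so no completion of your plan can prove it.

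\textbf{The ``if'' direction fails.} Take the bowtie, with triangles $xab$ and $xcd$ sharing only $x$. It is $2$-bicritical: every degree is at least $2$, and each independent pair $\{a,c\},\{a,d\},\{b,c\},\{b,d\}$ has three neighbours. Since $x$ is adjacent to every other vertex, $\{x\}$ is the only independent set through $x$. This settles the check you proposed: $\alpha(G)=2$ and $\Omega(G)$ consists of those four pairs. Hence $\core{G}=\emptyset$ and $\corona{G}=V(G)-\{x\}$, so the identity fails although no two odd cycles share two vertices.

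\textbf{The ``only if'' direction fails.} Take the house: the triangle $uva$ plus the odd ear $upqv$. Its only odd cycles are $uva$ and $upqva$, and they share $u,a,v$. It is $2$-bicritical, since its independent pairs $\{a,p\},\{a,q\},\{u,q\},\{v,p\}$ each have three neighbours. These four pairs are all of $\Omega(G)$, so $\core{G}=\emptyset$ and $\corona{G}=V(G)$, and the identity holds.

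What the case analysis (yours and the paper's) actually proves is the following: $\corona{G}\ud N(\core{G})=V(G)$ if and only if no two odd cycles of $G$ share \emph{exactly one} vertex. You should draw that conclusion explicitly rather than leave the fused case ``to be aligned''. As written, the theorem contradicts the paper's own \cref{io12312}.

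Separately, your justification of $\corona{G}=V(G)-\{x\}$ in the closed-ear case is a non sequitur. The equality $\alpha(G-x)=\alpha(G)$ gives $\Omega(G-x)\subseteq\Omega(G)$, not the reverse inclusion. The uniqueness of $x$ says nothing about independent sets containing $x$. Argue by counting instead. If the cycles have lengths $2k+1$ and $2l+1$, then $\alpha(G)=k+l$, attained in $G-x$, which consists of paths on $2k$ and $2l$ vertices. An independent set through $x$ has its other vertices in $G-x-N(x)$, which consists of paths on $2k-2$ and $2l-2$ vertices. So such a set has at most $k+l-1$ vertices.

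For the open-ear case, take any $x$ on the shared even path. Then $G-x$ is connected (a theta graph is $2$-connected), bipartite, and has a perfect matching. So both of its colour classes are maximum independent sets of $G$. Two such choices of $x$ give $\corona{G}=V(G)$ and $\core{G}=\emptyset$. Finally, $\corona{G}\cap N(\core{G})=\emptyset$ holds in every graph, so only the union ever needs checking.
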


\begin{theorem}
	Let $G$ be a $2$-bicritical graph with at most two odd cycles.
	If $G$ is connected, then
	\[
	\alpha(G)+\mu(G)=n-1.
	\]
	If $G$ is disconnected, then
	\[
	\alpha(G)+\mu(G)=n-2.
	\]
\end{theorem}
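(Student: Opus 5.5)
The statement to prove is the last theorem: $\alpha(G)+\mu(G)=n-1$ in the connected case and $n-2$ in the disconnected case. I would argue by cases along the classification of \cref{sss2}, combining the corollaries already proved for each family.

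\textbf{Connected case.} If $G$ is connected it is one-odd cycle, fused-odd, even-linked or odd-linked.
\begin{itemize}
\item For the fused-odd family, \cref{io12312io12312} gives $\alpha(G)+\mu(G)=n-1$ directly.
\item For the one-odd cycle family, $G$ is itself an odd cycle $C_n$, so $\alpha=\mu=\frac{n-1}{2}$ and the sum is $n-1$. This is also the remark that \cref{io12312} applies.
\item For the odd-linked family, \cref{123oiu21j3jj} gives a perfect matching and $\alpha(G)=\frac{n-2}{2}$, so the sum is $n-1$; this is \cref{123kj123j}.
\item For the even-linked family, \cref{12oij3} gives $\alpha(G)=\frac{n-1}{2}$. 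The matching number follows from the argument of \cref{12oiu3h129u3}: $M_z\cup M_X$ misses exactly one vertex and $n$ is odd, so $\mu(G)=\frac{n-1}{2}$. Hence the sum is $n-1$, which is \cref{123kj123j123kj123j}.
\end{itemize}

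The even-linked case needs one check. The proof of \cref{12oiu3h129u3} is written for "odd-linked" but uses the notation of \cref{12oij3}, so it is really the even-linked setting. I would confirm that $G[X]$ has a perfect matching. Each cycle minus its attachment vertex is an even path, and $X$ is exactly those vertices, because the ears attach only to $P$. This matching together with the perfect matching of $H-X-w-z$ from \cref{hetyei} covers everything except $z$.

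\textbf{Disconnected case.} By \cref{puyelarpendietnedesc}, each component of a $2$-bicritical graph is $2$-bicritical and so has an ear-pendant decomposition. Each component therefore contains an odd cycle. With at most two odd cycles in total, there are exactly two components, each containing exactly one odd cycle, and by the one-odd cycle case each is an odd cycle. Both $\alpha$ and $\mu$ are additive over components, and each odd cycle contributes $n_i-1$. The total is $n-2$.

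\textbf{Main obstacle.} The only delicate point is verifying that the classification is exhaustive and that each family's corollary really holds as stated. The even-linked matching number is the step I would recheck most carefully, since its proof is phrased for the wrong family.
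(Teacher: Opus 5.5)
Your proposal is correct and follows the paper's own route. The paper obtains this theorem the same way: it combines the per-family results (one-odd cycle/fused-odd, odd-linked, even-linked) and notes that a disconnected graph in this class is a disjoint union of two odd cycles, each contributing $n_i-1$. Your remark that the matching-number theorem stated for ``odd-linked'' graphs is really the even-linked case is accurate: it uses the even-linked notation, and $\mu(G)=\frac{n-1}{2}$ contradicts the perfect matching of odd-linked graphs. Your check that $G[X]$ consists of two even paths, and so has a perfect matching, is exactly what that argument needs.
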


\begin{conjecture}
	If $G$ is a $2$-bicritical graph with two odd
	cycles, then $\core G$ and $\corona G$ can be computed in
	polynomial time.
\end{conjecture}

\begin{problem}
	If $G$ is a graph with $k$ odd cycles, explicitly determine
	$\core G$ and $\corona G$ for $k\ge1$.
\end{problem}

\begin{problem}
	If $G$ is a $2$-bicritical graph with $k$ odd cycles,
	explicitly determine $\core G$ and $\corona G$ for $k\ge3$.
\end{problem}

\section*{Acknowledgments}

	This work was partially supported by Universidad Nacional de San Luis, grants PROICO 03-0723 and PROIPRO 03-2923, MATH AmSud, grant 22-MATH-02, Consejo Nacional de Investigaciones
	Cient\'ificas y T\'ecnicas grant PIP 11220220100068CO and Agencia I+D+I grants PICT 2020-00549 and PICT 2020-04064.

	\section*{Declaration of generative AI and AI-assisted technologies in the writing process}
	During the preparation of this work the authors used ChatGPT-3.5 in order to improve the grammar of several paragraphs of the text. After using this service, the authors reviewed and edited the content as needed and take full responsibility for the content of the publication.

\section*{Data availability}

Data sharing not applicable to this article as no datasets were generated or analyzed during the current study.

\section*{Declarations}

\noindent\textbf{Conflict of interest} \ The authors declare that they have no conflict of interest.

\bibliographystyle{apalike}

\bibliography{TAGcitasV2025}

\end{document}